\newtheorem{theorem}{Theorem}[section]
\newtheorem{lemma}[theorem]{Lemma}
\newtheorem{proposition}[theorem]{Proposition}
\newtheorem{corollary}[theorem]{Corollary}
\theoremstyle{definition}
\newtheorem{definition}[theorem]{Definition}
\theoremstyle{remark}
\renewcommand{\leq}{\leqslant}
\renewcommand{\geq}{\geqslant}
\newcommand\diam{\operatorname{diam}}
\newcommand\Out{\operatorname{Out}}
\newcommand\Aut{\operatorname{Aut}}
\newcommand\prim{\operatorname{prim}}
\newcommand\perm{\operatorname{perm}}
\newcommand\width{\operatorname{width}}
\newcommand\irred{\operatorname{irred}}
\newcommand\tildef{f}
\def\F{\mathbf{F}}
\def\R{\mathbf{R}}
\def\C{\mathbf{C}}
\def\Z{\mathbf{Z}}
\def\N{\mathbf{N}}
\newcommand\U{\operatorname{U}}
\renewcommand\O{\operatorname{O}}
\def\eps{\varepsilon}
\numberwithin{equation}{section}
\begin{document}

\title[Matrix coefficients, transitive sets]{On the width of transitive sets: bounds on matrix coefficients of finite groups}


\author{Ben Green}
\thanks{The author is supported by a Simons Investigator Award and is grateful to the Simons Foundation for their support. He also wishes to thank Ashwin Sah, Mehtaab Sawhney and Yufei Zhao for pointing out two significant errors in the published version of the paper, and for providing corrections which we have incorporated into this revision. }
\address{Mathematical Institute \\ Andrew Wiles Building \\ Radcliffe Observatory Quarter \\ Woodstock Rd \\ Oxford OX2 6QW}
\email{ben.green@maths.ox.ac.uk}

\subjclass[2000]{Primary }

\begin{abstract}
We say that a finite subset of the unit sphere in $\R^d$ is \emph{transitive} if there is a group of isometries which acts transitively on it. We show that the width of any transitive set is bounded above by a constant times $(\log d)^{-1/2}$.

This is a consequence of the following result: If $G$ is a finite group and $\rho : G \rightarrow \U_d(\C)$ a unitary representation, and if $v \in \C^d$ is a unit vector, there is another unit vector $w \in \C^d$ such that 
\[  \sup_{g \in G} |\langle \rho(g) v, w \rangle| \leq (1 + c \log d)^{-1/2}.\]

These results answer a question of Yufei Zhao. An immediate consequence of our result is that the diameter of any quotient $S(\R^d)/G$ of the unit sphere by a finite group $G$ of isometries is at least $\pi/2 - o_{d \rightarrow \infty}(1)$.
\end{abstract}


\maketitle

\section{Introduction}\label{sec1}

Let $\O(\R^d)$ be the $d$-dimensional orthogonal group, and write $S(\R^d)$ for the unit sphere in $\R^d$. If $V$ is a complex inner product space (for example $\C^d$), write $\U(V)$ for the group of unitary transformations on $V$. Write $S(V) = \{v \in V : \Vert v \Vert = 1\}$ for the unit sphere of $V$. 

\begin{definition}\label{def-1}
Let $f_{\R}(d)$ be the smallest function such that, for all finite groups $G \leq \O(\R^d)$ and for all $v \in S(\R^d)$ there is some $w \in S(\R^d)$ such that $\sup_{g \in G} |\langle gv, w \rangle| \leq f_{\R}(d)$. Let $f_{\C}(d)$ be the smallest function such that, for all finite groups $G \leq \U(\C^d)$, and for all $v \in S(\C^d)$ there is some $w \in S(\C^d)$ such that $\sup_{g \in G} |\langle g v, w \rangle| \leq f_{\C}(d)$. \end{definition}

\emph{Remarks.} Note that $f_{\R}(d), f_{\C}(d)$ are well-defined since the unit spheres $S(\R^d)$, $S(\C^d)$ are compact. 

Observe also that $f_{\C}(d)$ is also the least function  such that for all finite groups $G$, all unitary representations $\rho : G \rightarrow \U(\C^d)$ and all $v \in S(\C^d)$ there is some $w \in S(\C^d)$ such that $\sup_{g \in G} |\langle \rho(g) v, w \rangle| \leq f_{\C}(d)$: simply apply Definition \ref{def-1} to $\rho(G) \leq \U(\C^d)$.

Our main theorem is as follows.

\begin{theorem}\label{mainthm}
We have $f_{\R}(d), f_{\C}(d) \ll \frac{1}{\sqrt{\log d}}$ for $d \geq 2$. \end{theorem}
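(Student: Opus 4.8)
The plan is to bound $f_{\mathbf{C}}(d)$; since every real configuration is a complex one, a short argument (write the complex witness $w=w_1+iw_2$ with $w_1,w_2\in\R^d$, note $|\langle\rho(g)v,w\rangle|^2=\langle\rho(g)v,w_1\rangle^2+\langle\rho(g)v,w_2\rangle^2$, and keep whichever of $w_1,w_2$ has norm $\geq 1/\sqrt2$) gives $f_{\mathbf{R}}(d)\leq\sqrt2\,f_{\mathbf{C}}(d)$. So it suffices to prove $f_{\mathbf{C}}(d)^2\leq(1+c\log d)^{-1}$ for a suitable absolute constant $c\in(0,1/\log 2]$, and I would do this by induction on $d$, with the trivial base case $f_{\mathbf{C}}(1)=1$. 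Throughout one may assume the orbit of $v$ spans $\C^d$.

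The workhorse is a splitting recursion for reducible representations. If $\rho=\alpha\oplus\beta$ as a $G$-representation and $v=v_\alpha+v_\beta$ with $\|v_\alpha\|^2+\|v_\beta\|^2=1$, then taking $w$ to be a unit vector lying entirely in the $\alpha$-summand gives $\sup_g|\langle\rho(g)v,w\rangle|=\|v_\alpha\|\sup_g|\langle\alpha(g)\widehat{v_\alpha},w\rangle|\leq\|v_\alpha\|\,f_{\mathbf{C}}(\dim\alpha)$, and similarly with $\beta$; optimising the adversary's choice of $\|v_\alpha\|$ yields, for any configuration with this reducible $\rho$,
\[ \big(\text{value}\big)^{-2}\ \geq\ f_{\mathbf{C}}(\dim\alpha)^{-2}+f_{\mathbf{C}}(\dim\beta)^{-2}. \]
Iterating over a decomposition of $\rho$ into irreducibles of dimensions $d_1,\dots,d_m$ (all $<d$, since $m\geq2$), and substituting the inductive hypothesis $f_{\mathbf{C}}(d_i)^{-2}\geq 1+c\log d_i$, gives value$^{-2}\geq m+c\log(d_1\cdots d_m)\geq 1+c\log d$, the last step using $d=\sum d_i\leq m\,d_1\cdots d_m$ together with the elementary inequality $c\log m\leq m-1$ (valid for all $m\geq1$ when $c\leq 1/\log2$). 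Thus every configuration whose representation is reducible already satisfies the required bound, and the problem reduces to irreducible $\rho$.

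For irreducible $\rho$ I would pass to the conjugate square. With $V=vv^\ast-\tfrac1d\mathrm{id}$ and $W=ww^\ast-\tfrac1d\mathrm{id}$ (traceless Hermitian, of Hilbert--Schmidt norm $\sqrt{1-1/d}$) one has the identity
\[ |\langle\rho(g)v,w\rangle|^2=\tfrac1d+\langle(\rho\otimes\bar\rho)(g)V,\,W\rangle_{\mathrm{HS}}, \]
and irreducibility of $\rho$ means the representation $\tau:=(\rho\otimes\bar\rho)|_{\{\text{traceless}\}}$, of dimension $d^2-1$, has no trivial subrepresentation. Decomposing $\tau$ into irreducibles and running the same splitting idea — now aiming to make $\sup_g\langle\tau(g)V,W\rangle$ small — should let one win once $\tau$ has many components, and $\rho\otimes\bar\rho$ has at least $(\dim\rho)^2/\sqrt{|G|}=d^2/\sqrt{|G|}$ irreducible constituents (counted with multiplicity, from $\sum_\pi m_\pi\geq(\sum_\pi m_\pi^2)^{1/2}=\langle|\chi_\rho|^2,|\chi_\rho|^2\rangle^{1/2}\geq d^2/\sqrt{|G|}$). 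This should dispose of everything except when $|G|$ is large, say $|G|\gtrsim d^4$.

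The remaining case — $\rho$ irreducible, $|G|$ large — is where I expect the real work to lie, and it is the main obstacle. Here one must use structure theory: when $\rho$ is imprimitive it is induced from a proper subgroup and one can recurse to a representation of strictly smaller dimension (the repeated halving of the dimension along such a recursion is, I believe, exactly the mechanism that produces the $(\log d)^{-1/2}$ rate rather than a stronger bound), while a quasi-primitive $\rho$ with huge group must be handled via the restricted structure of quasi-primitive linear groups. A second, more technical, obstruction is that $W$ in the conjugate-square identity is not an arbitrary unit traceless Hermitian matrix but is confined to the orbit $\{ww^\ast-\tfrac1d\mathrm{id}\}\cong\mathbf{CP}^{d-1}$, which has real dimension only $2(d-1)$; so the "put $W$ into one summand'' move from the reducible recursion is not available and must be replaced by a robust averaging/rounding step that produces an admissible $W$ whose projections onto the relevant summands of $\tau$ are small. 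Assembling the pieces — the reducible recursion, the conjugate-square reduction, the imprimitive dimension-recursion, and the quasi-primitive base estimate — and verifying that the constant $c$ survives all of them, completes the induction and hence the theorem.
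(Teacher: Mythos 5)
Your opening moves are sound and, in fact, line up with the paper: the $\sqrt{2}$ reduction of $f_{\R}$ to $f_{\C}$ is exactly the paper's Lemma 1.5, and your reducible-case recursion $\text{value}^{-2}\geq f_{\C}(\dim\alpha)^{-2}+f_{\C}(\dim\beta)^{-2}$ is precisely the first term of the paper's Proposition 2.1, and your verification that $m + c\log(d_1\cdots d_m) \geq 1 + c\log d$ is essentially the paper's inequality \eqref{eta-1}. So far so good. But everything from ``For irreducible $\rho$ I would pass to the conjugate square'' onward is not a proof, and the gaps are not cosmetic.

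The conjugate-square identity $|\langle\rho(g)v,w\rangle|^2=\tfrac1d+\langle(\rho\otimes\bar\rho)(g)V,W\rangle_{\mathrm{HS}}$ is correct, but the move you want to make with it — place $W$ entirely in one irreducible constituent of $\tau$ and have small projection onto the others — is unavailable, for exactly the reason you flag: the admissible $W$'s are rank-one projectors minus $\tfrac1d\mathrm{id}$, a $2(d-1)$-real-dimensional subvariety of the $(d^2-1)$-dimensional space of traceless Hermitian matrices. A generic constituent of $\tau$ contains \emph{no} admissible $W$ at all, and there is no evident averaging/rounding device that recovers the recursion while remaining inside this variety; you do not supply one, and the constituent-count lower bound $d^2/\sqrt{|G|}$ (which is correct) does not by itself manufacture one. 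This is not a ``technical obstruction'' to be papered over — it is the step that would need the real idea, and it is left open. Separately, your count disposes of nothing even as a heuristic until you explain how to turn ``$\tau$ has many constituents'' into ``$\sup_g\langle\tau(g)V,W\rangle$ is small for some admissible $W$.'' The final paragraph (imprimitive dimension-recursion, quasi-primitive structure theory) is a description of the shape of a proof, not a proof.

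For comparison, the paper does not pass to $\rho\otimes\bar\rho$ at all. It works with a strengthened quantity $f(d)$ defined via a symmetric probability measure $\mu$ on $S(\C^d)$ (depending on $G$ but not on $v$) and the integral $\int\sup_g|\langle gv,w\rangle|^2\,d\mu(w)$, and the author explicitly remarks that the induction ``does not seem possible'' with $f_{\C}(d)$ directly. The reason is visible precisely at the case you defer: when $\rho$ is irreducible but imprimitive, the paper first proves that the blocks $V_i$ of a system of imprimitivity are automatically orthogonal (Lemma 2.2), then builds $\mu$ as the pushforward of a product measure $\mu_1\times\mu_2$, where $\mu_1$ is a good measure for the primitive action on a block $V_1$ and $\mu_2$ is an explicitly constructed ``dyadic'' measure handling the permutation action on the blocks (Proposition 1.11, proved via Selberg's almost-orthogonality inequality). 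No single deterministic $w$ can do this job, which is why your $f_{\C}$-only framework would stall here. For the primitive case the paper proves a dichotomy (Proposition 4.3, via the generalised Fitting subgroup $F^*(G)$ and CFSG): either $[G:Z_d\cap G]\leq e^{d/\log d}$, in which case Haar measure and a concentration-of-measure estimate suffice, or $G$ contains a normal $A_n$ with $n\gg d/\log^4 d$, in which case the representation is controlled by the permutation representation of $A_n$ and Proposition 1.11 again applies. Your sketch gestures at the right objects (imprimitivity, quasi-primitive structure) but supplies none of the arguments, and the conjugate-square detour, as written, leads nowhere.
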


This answers a question of Yufei Zhao\footnote{Personal communication, 2016; also presented at the Workshop on Additive Combinatorics, Harvard, October 2017.} in the affirmative.

The bound for $f_{\R}(d)$ may be formulated in an intuitive geometric way. We say that a finite subset $X \subset S(\R^d)$ is \emph{transitive} if there is a group of isometries which acts transitively on $X$. Colloquially, ``all points of $X$ look the same''. The \emph{width} of $X$ is the minimal length, over all $w \in S(\R^d)$, of the orthogonal projection of $X$ onto the vector $w$. We denote this by $\width(X)$. A transitive set whose linear span is $\R^d$ is the same thing as an orbit $Gv$, where $G$ is a finite subgroup of $O(\R^d)$ and $v$ is any point of $X$, and therefore the bound on $f_{\R}(d)$ may be stated in the following manner, asserting that ``transitive sets are almost flat''.  

\begin{theorem}\label{flat-transitive}
Let $X \subset S(\R^d)$ be a transitive set. Then $\width(X) \ll \frac{1}{\sqrt{\log d}}$.
\end{theorem}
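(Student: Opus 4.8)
The plan is to deduce Theorem~\ref{flat-transitive} from Theorem~\ref{mainthm}, so I shall mostly describe how I would attack the latter. The deduction itself is immediate: given a transitive $X\subset S(\R^d)$ with $V=\operatorname{span}X$, either $V\neq\R^d$, in which case any unit $w\perp V$ kills every $\langle x,w\rangle$ and $\width(X)=0$; or $V=\R^d$, in which case $X=Gv$ for a finite $G\leq\O(\R^d)$ and any $v\in X$, and Definition~\ref{def-1} together with Theorem~\ref{mainthm} produces $w\in S(\R^d)$ with $\width(X)\leq\sup_{g\in G}|\langle gv,w\rangle|\leq f_{\R}(d)\ll(\log d)^{-1/2}$. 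It is also enough to bound $f_{\C}(d)$: writing a near-optimal complex $w$ as $w_1+\mathrm{i}w_2$ with $w_j\in\R^d$ and using $|\langle gv,w\rangle|^2=\langle gv,w_1\rangle^2+\langle gv,w_2\rangle^2$ for $g\in\O(\R^d)$, the longer of $w_1,w_2$ shows $f_{\R}(d)\leq\sqrt2\,f_{\C}(d)$.

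Two soft facts dispatch all but the largest groups. First, the operator $A=\frac1{|G|}\sum_{g}\rho(g)vv^{*}\rho(g)^{*}$ commutes with $\rho(G)$, is positive semidefinite, and has trace $1$, so its least eigenvalue is $\leq1/d$ and the corresponding unit $w$ has $\frac1{|G|}\sum_{g}|\langle\rho(g)v,w\rangle|^{2}\leq1/d$ --- but this controls only the mean square of the matrix coefficients, not their supremum, which is what we need. Second, a fixed cap $\{w\in S(\C^{d}):|\langle w,x\rangle|>\delta\}$ has normalized measure $(1-\delta^{2})^{d-1}$, so a $w$ with $\sup_{x\in Gv}|\langle w,x\rangle|\leq\delta$ exists whenever $|Gv|(1-\delta^{2})^{d-1}<1$, i.e.\ whenever $\delta\gg\sqrt{\log|G|/d}$; this proves Theorem~\ref{mainthm} with enormous room whenever $\log|G|\ll d/\log d$, but is useless for the interesting case $G=S_{n}$ in its $(n-1)$-dimensional standard representation, where $|G|=n!$.

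For large $G$ one must construct $w$, and I would try a suitably normalized sum $w\propto\sum_{i\leq t}\rho(g_{i})v$ over a set $g_{1},\dots,g_{t}\in G$ chosen so that the $\rho(g_{i})v$ are pairwise nearly orthogonal: then $\|\sum_i\rho(g_i)v\|^{2}\approx t$, each $\rho(g)v$ is close to at most one of the $\rho(g_i)v$, and so $\width(Gv)\ll t^{-1/2}$. Since $\langle\rho(g_i)v,\rho(g_j)v\rangle=\psi(g_i^{-1}g_j)$ for the normalized positive-definite function $\psi(g)=\langle\rho(g)v,v\rangle$, such a set is exactly an independent set in the Cayley graph on $S_{\epsilon}S_{\epsilon}^{-1}$, where $S_{\epsilon}=\{g:|\psi(g)|>\epsilon\}$, and has size $t\gtrsim|G|/|S_{\epsilon}|^{2}$, giving $\width(Gv)\ll|S_{\epsilon}|/\sqrt{|G|}$. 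When $v$ is ``spread out'' (so $\psi$ decays away from the identity) this is of order $d^{-1/2}$, as one checks by hand for $S_n$; the only obstruction is $\psi$ concentrating, which forces $v$ to be near an eigenvector of $\rho$ restricted to a large subgroup $H$. In that case one instead diagonalizes $\rho|_H$ and spreads $w$ over the $H$-eigenspaces, the model being the orbit $\{\omega e_{j}:|\omega|=1,\ j\leq d\}$, of width $d^{-1/2}$ realized by $w=d^{-1/2}(1,\dots,1)$.

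I expect the main obstacle to be gluing these two regimes uniformly in $G$ and in $d$, and this is where the logarithm should appear: the separated-set argument forces a trade-off between taking $\epsilon$ small and keeping $|S_{\epsilon}|$ small, while the eigenspace argument needs control of how the mass of $v$ distributes over the $H$-eigenspaces and of how $G$ permutes them; organizing all the intermediate cases between ``totally spread'' and ``genuinely toral'' with a single clean estimate is the heart of the matter, and is what I would expect to cost the factor $\log d$ in the final bound.
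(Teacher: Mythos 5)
Your deduction of Theorem~\ref{flat-transitive} from Theorem~\ref{mainthm} is correct and is exactly the paper's one-line observation (if $X$ does not span, take $w$ orthogonal to the span; otherwise $X$ is a finite orbit $Gv$ with $G\le\O(\R^d)$ and $f_{\R}(d)$ bounds the width), and your $f_{\R}\le\sqrt2\,f_{\C}$ reduction is the paper's Lemma~1.5. Your cap-volume observation is essentially Proposition~\ref{random-prop}, except the paper counts cosets of the central circle $Z_d\cap G$ rather than $|G|$ itself, which is the form that meshes with Collins' theorem.

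The genuine gap is that you do not prove Theorem~\ref{mainthm}, and the plan you sketch is both incomplete and structurally different from the paper's argument. In your separated-set construction, being an $S_\eps S_\eps^{-1}$-independent set guarantees at most one large term $|\psi(g_i^{-1}g)|$, but you still have up to $t$ terms of size $\approx\eps$, so you need an additional constraint $\eps t\ll1$ to conclude $\width(Gv)\ll t^{-1/2}\ll|S_\eps|/\sqrt{|G|}$; choosing $\eps$ optimally, uniformly over all $G$ and all $v$, is precisely the content you defer to the phrase ``gluing these two regimes,'' which is where the whole proof lives --- and the target $S_n$ on $\mathbf 1^\perp$ already shows the answer should be $(\log n)^{-1/2}$ rather than the $d^{-1/2}$ your model orbit suggests. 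The paper instead replaces a single vector $w$ by a measure $\mu$ depending only on $G$ (Definition~\ref{def-2}), which tensorises and inducts cleanly; it reduces to the primitive case by induction on $d$ and via orthogonal systems of imprimitivity (Lemma~\ref{orth}, Proposition~\ref{inductive-prop}); it resolves the primitive case through Collins' analysis of $F^*(G)$ and the CFSG, arriving at a normal $A_n$ with $n\gg d/\log^4 d$ (Proposition~\ref{inverse-collins}); and the key explicit construction, for permutation actions, is a discrete measure on $O(\log d)$ dyadic ``staircase'' vectors $e_i$ controlled by Selberg's almost-orthogonality inequality (Proposition~\ref{permprop}) --- not a normalised sum of orbit translates. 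None of these mechanisms appears in your proposal, so while the Theorem~\ref{flat-transitive}$\Leftarrow$Theorem~\ref{mainthm} step is fine, the proposal as a whole does not constitute a proof.
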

\emph{Remarks.} Readers familiar with high-dimensional phenomena will of course note that as $d \rightarrow \infty$ ``most'' of $S(\R^d)$ is contained in (say) the slab $\{x \in \R^d : |x_1| \lessapprox 1/\sqrt{\log d}\}$. Whilst this observation is by no means enough to prove Theorem \ref{flat-transitive}, it will be essential in our argument: see Proposition \ref{random-prop}. \vspace*{8pt}

\emph{Diameter of quotient spaces\footnote{This section was added in July 2019, the article having originally been posted in February 2018. We thank Alexander Lytchak for helpful comments.}.} It has come to the author's attention that an immediate consequence of Theorem \ref{flat-transitive} is that the diameter of any quotient space $S(\R^d)/G$ ($G$ finite) is $\frac{\pi}{2} - o_{d \rightarrow \infty}(1)$. This answers \cite[Conjecture 5.4]{dgms} in the case of finite groups. In fact, we have the stronger result that any point of $S(\R^d)/G$ is at distance at least $\frac{\pi}{2} - o_{d \rightarrow \infty}(1)$ from some other point. In terms of previous work on $\diam(S(\R^d)/G)$ we note, in addition to the paper \cite{dgms} just mentioned, the work of Greenwald \cite{greenwald} (who established a bound $\diam(S(\R^d)/G) \geq \eps(d)$ for some $\eps(d) > 0$) and the very recent work of Gorodski, Lange, Lytchak and Mendes \cite{gllm}, who showed\footnote{The authors of \cite{gllm} and I were completely unaware of each other's work. Note that a large part of \cite{gllm} is concerned with \emph{infinite} $G$, about which we say essentially nothing.} that $\eps(d)$ may be taken independent of $d$. \vspace{8pt}

\emph{Sharpness.} Let us note that Theorems \ref{mainthm} and \ref{flat-transitive} are sharp up to a multiplicative constant. To see this over $\R$, consider the transitive subset $X \subset S(\R^d)$ consisting of all permutations of all vectors 
\[ \frac{1}{\sqrt{H_d}} (\pm 1, \pm \frac{1}{\sqrt{2}},\dots, \pm \frac{1}{\sqrt{d}}),\] where $H_d = \sum_{i = 1}^d \frac{1}{i}$. Then we have 
\[ \inf_{w \in S(\R^d)} \sup_{x \in X} |\langle x, w\rangle| \geq \frac{1}{\sqrt{H_d}}\inf_{\substack{w \in S(\R^d) \\ w_1 \geq w_2 \geq \dots \geq w_d \geq 0}} \sum_{i=1}^d \frac{w_i}{\sqrt{i}}.\]
However if $w_1 \geq w_2 \geq \dots \geq w_d \geq 0$ then
\[
\big( \sum_{i = 1}^d \frac{w_i}{\sqrt{i}} \big)^2 \geq \sum_{\substack{i,j \in \{1,\dots, d\} \\i \leq j}} \frac{w_i w_j}{\sqrt{ij}}  \geq \sum_{\substack{i,j \in \{1,\dots, d\} \\ i \leq j}} \frac{w^2_j}{j} = \sum_{j = 1}^d w_j^2,
\]
and so it follows that 
\[ \inf_{w \in S(\R^d)} \sup_{x \in X} |\langle x, w\rangle| \geq \frac{1}{\sqrt{H_d}} \gg \frac{1}{\sqrt{\log d}}.\]
A very similar example works over $\C$.\vspace{5pt}

In the proofs of our main theorems will be using basic representation theory, and therefore it is much more natural to work over $\C$ than over $\R$. The following simple argument gives a bound for $f_{\R}(d)$ in terms of $f_{\C}(d)$, thus reducing the proof of Theorem \ref{mainthm} to the complex case.

\begin{lemma} We have $f_{\R}(d) \leq 2^{1/2} f_{\C}(d)$.
\end{lemma}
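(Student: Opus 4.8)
The plan is to exploit the tautological inclusion $\O(\R^d) \le \U(\C^d)$: a real orthogonal representation is a special case of a complex unitary one, so the definition of $f_{\C}(d)$ already supplies a good \emph{complex} witness vector, and the only work is to convert it into a real one at the cost of a bounded factor.

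Concretely, I would argue as follows. Let $G \le \O(\R^d)$ be finite and let $v \in S(\R^d)$. Since every real orthogonal matrix is also unitary on $\C^d$, we may regard $G$ as a finite subgroup of $\U(\C^d)$ and $v$ as a point of $S(\C^d)$; applying the definition of $f_{\C}(d)$ produces a unit vector $w' \in S(\C^d)$ with $\sup_{g \in G} |\langle gv, w' \rangle| \le f_{\C}(d)$. Now write $w' = a + ib$ with $a, b \in \R^d$, so that $\Vert a \Vert^2 + \Vert b \Vert^2 = 1$ and hence, after relabelling if necessary, $\Vert a \Vert \ge 1/\sqrt{2}$. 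For each $g \in G$ the vector $gv$ is real, so $\langle gv, w' \rangle = \langle gv, a \rangle \mp i \langle gv, b \rangle$ (the sign depending on the convention for the Hermitian form), and in particular $|\langle gv, a \rangle| \le |\langle gv, w' \rangle|$. Taking $w := a / \Vert a \Vert \in S(\R^d)$ then gives
\[
|\langle gv, w \rangle| = \frac{|\langle gv, a \rangle|}{\Vert a \Vert} \le \sqrt{2}\,|\langle gv, w' \rangle| \le \sqrt{2}\, f_{\C}(d)
\]
for every $g \in G$. Taking the supremum over $g \in G$ and then using the minimality in Definition \ref{def-1} yields $f_{\R}(d) \le 2^{1/2} f_{\C}(d)$.

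There is essentially no obstacle here; the one point that needs care is simply that the complex witness $w'$ need not be real, and that replacing it by (the normalisation of) whichever of its real and imaginary parts is larger loses a factor $\sqrt{2}$ but nothing more. In the write-up I would also remark that this reduction is the reason it suffices to prove Theorem \ref{mainthm} over $\C$, where representation theory is available.
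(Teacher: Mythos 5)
Your proof is correct and is essentially identical to the paper's: both regard $G \leq \O(\R^d)$ as a finite subgroup of $\U(\C^d)$, invoke the definition of $f_{\C}(d)$ to obtain a complex witness, observe that for real $gv$ the real and imaginary parts of the witness each have inner product bounded by $|\langle gv, w'\rangle|$, and normalise whichever of the two has norm at least $1/\sqrt{2}$.
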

\begin{proof}
Let $G \leq \O(\R^d)$ and suppose that $v \in S(\R^d)$. Extend scalars to $\C$, thus regarding $v$ as an element of $\C^d$ and $G$ as a subgroup of $\U(\C^d)$. Pick some $w \in S(\C^d)$ with $\sup_{g \in G} |\langle gv, w\rangle| \leq f_{\C}(d)$.
Note that $|\langle gv, \Re w\rangle|, |\langle gv, \Im w\rangle| \leq |\langle gv, w\rangle|$ for all $g$, since the $gv$ are all real. By Pythagoras, one of $\Vert \Re w \Vert, \Vert \Im w \Vert$ is at least $\frac{1}{\sqrt{2}}$. Taking $x := \Vert \Re w \Vert^{-1} \Re w$ or $\Vert \Im w \Vert^{-1} \Im w$ as appropriate, we have $x \in S(\R^d)$ and $\sup_{g \in G} |\langle gv, x \rangle| \leq 2^{1/2}f_{\C}(d)$.  \end{proof}

The trivial bound for $f_{\C}(d)$ is $1$. Using Jordan's theorem, one can improve this slightly. 

\begin{proposition}\label{jordan-consequence}
For each $d \geq 2$ there is some $\eta_d > 0$ such that $f_{\C}(d) \leq 1 - \eta_d$. 
\end{proposition}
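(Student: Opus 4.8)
\emph{Plan of proof.} The goal is the uniform estimate
\[
\sup_{G, v}\;\inf_{w \in S(\C^d)}\; \sup_{g \in G} |\langle \rho(g)v, w\rangle| \;<\; 1 ,
\]
the outer supremum being over all finite $G \le \U(\C^d)$ and all $v \in S(\C^d)$; this is exactly the assertion that $f_\C(d) \le 1-\eta_d$ for some $\eta_d > 0$. The inner quantity depends on $(G,v)$ only through the orbit $Gv \subseteq S(\C^d)$, and $\inf_{w}\sup_{x\in X}|\langle x,w\rangle|$ is invariant under replacing $X$ by $UX$ for $U \in \U(\C^d)$. First I would invoke Jordan's theorem: there is a function $J(d)$ so that $G$ contains an abelian subgroup $A$ (I shall not need normality) with $[G:A] = m \le J(d)$. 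Fixing left coset representatives $\gamma_1,\dots,\gamma_m$ we get $Gv = \bigcup_{i=1}^m A u_i$ with $u_i := \rho(\gamma_i)v \in S(\C^d)$. Since $A$ is a finite abelian group of unitary operators, $\C^d$ is the orthogonal direct sum of the $A$-eigenspaces $V_\chi$, with $\rho(a)$ acting on $V_\chi$ by the scalar $\chi(a)$; writing $x = \sum_\chi x_\chi$ for the induced decomposition of a vector, we obtain $A u_i \subseteq T_i := \{\sum_\chi \lambda_\chi (u_i)_\chi : |\lambda_\chi| = 1\}$, a flat torus inside $S(\C^d)$. Hence, for every $w$,
\[
\sup_{g \in G} |\langle \rho(g)v, w\rangle| \;\le\; \max_{1\le i\le m}\, \sup_{x \in T_i} |\langle x, w\rangle| \;=\; \max_{1\le i\le m}\, \sum_\chi |\langle (u_i)_\chi, w_\chi\rangle| ,
\]
which I abbreviate to $F(w)$. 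So it suffices to bound $\inf_w F(w)$ above by something strictly less than $1$.

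The main point is a \emph{pointwise} bound: $F(w) < 1$ whenever $w \notin \bigcup_{i=1}^m T_i$. Indeed, if $F(w) \ge 1$ then, since each $\sup_{x\in T_i}|\langle x, w\rangle|$ is attained ($T_i$ is compact) and is $\le 1$ by Cauchy--Schwarz, we have $|\langle x, w\rangle| = 1$ for some $i$ and some $x \in T_i$; then $w$ is a unimodular multiple of $x$, and since $T_i$ is stable under multiplication by unimodular scalars, $w \in T_i$. But each $T_i$ is a smooth image of a torus of dimension at most $d$, hence has measure zero in $S(\C^d)$ as soon as $d < 2d-1$, i.e.\ for $d \ge 2$; so $\bigcup_i T_i \ne S(\C^d)$, and there exists $w$ with $F(w) < 1$. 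Thus $\inf_w F(w) < 1$ for every instance of the data.

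To upgrade this to the required \emph{uniform} bound I would run a compactness argument, and this is precisely where the bound $m \le J(d)$ is used. After conjugating $G$ (equivalently $v$, $A$ and all the $u_i$) by a suitable unitary---harmless, since the quantity of interest is unitarily invariant---we may assume $A$ is diagonal, so the eigenspace decomposition $\C^d = \bigoplus_\chi V_\chi$ is one of the finitely many ``coordinate block'' decompositions indexed by compositions of $d$. For each of the finitely many pairs (block decomposition, integer $m \le J(d)$), the map $(u_1,\dots,u_m) \mapsto \inf_{w \in S(\C^d)} F(w)$ is continuous on the compact set $S(\C^d)^m$ (routine: $F$ is jointly continuous in $w$ and the $u_i$, and the $w$-sphere is compact) and, by the previous paragraph, takes values strictly below $1$; hence it attains a maximum $1 - \eta < 1$. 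Letting $\eta_d$ be the least of these finitely many values of $\eta$ gives $f_\C(d) \le 1 - \eta_d$.

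The step needing the most care is this last one---not for any deep reason, but because the whole argument collapses unless the index $m$, and hence the ``dimension'' of the parameter space $S(\C^d)^m$, stays bounded; this is the sole purpose of Jordan's theorem here, and it is genuinely needed, since $f_\C(1) = 1$ shows the statement fails for $d = 1$ (orbits of large cyclic subgroups of $\U(\C^1)$ fill the circle). Everything else---the reduction of $Gv$ to a bounded union of flat tori, the Cauchy--Schwarz evaluation of $\sup_{x\in T_i}|\langle x,w\rangle|$, the negligibility of a bounded union of low-dimensional tori on $S(\C^d)$, and the continuity of $\inf_w F$---is routine.
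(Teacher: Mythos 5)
Your proof is correct and follows essentially the same route as the paper: Jordan's theorem, simultaneous diagonalization of the abelian subgroup, confinement of the orbit inside a bounded union of tori of real dimension at most $d < 2d-1$, and compactness of $S(\C^d)^m$ to extract a uniform $\eta_d$. The only (minor) difference is that the paper bounds by $\max_i \langle |g_i v|, |w|\rangle$ with coordinate-wise absolute values, which makes the auxiliary function depend solely on the tuple of orbit representatives and so spares the extra (correct but unnecessary) step you take of enumerating the finitely many coordinate-block partitions of $\{1,\dots,d\}$.
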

\begin{proof}
Jordan's theorem states that there is a function $F : \N \rightarrow \N$ with the following property. if $G \leq \U(\C^d)$ is finite then there is an abelian normal subgroup $A \lhd G$ with $[G : A] \leq F(d)$. By linear algebra, we may apply a unitary change of basis to $\C^d$ and thereby assume that every $g \in A$ is diagonal, with entries having absolute value $1$. Let $g_1,\dots, g_k$, $k \leq F(d)$, be a complete set of coset representatives for $A$ in $G$. We look first at $f_{\C}(d)$. For any $w \in \C^d$, 
\[ \sup_{g \in G} |\langle g v, w\rangle|  \leq \sup_{i = 1,\dots, k} \langle | g_i v|, |w| \rangle ,\] where $|z| \in S(\R^d)$ denotes the vector obtained from $z$ by replacing each coordinate with its absolute value. Note that the function from $S(\C^d)^k$ to $\R$ defined by $(v_1,\dots, v_k) \mapsto \inf_{w \in S(\C^d)}\sup_i |\langle |v_i|, |w |\rangle|$ is continuous, and so by compactness we need only show that this function is never $1$. If this were so, we would have $S(\C^d) \subset \bigcup_{i = 1}^k\{ w \in \C^d : |w| = |v_i|\}$ for some choice of $v_1,\dots, v_k$. However, the right-hand side is a union of $k$ real tori of dimension $d$, whereas the left-hand side is a sphere of dimension $2d - 1$ over $\R$. This is a contradiction if $d \geq 2$.
\end{proof}

M.~Collins \cite{collins1, collins2} has obtained the best possible bounds for $F(d)$, namely $F(d) \leq (d + 1)!$ (for large $d$).  However, even with this result to hand, any bound for $f_{\C}(d)$ obtained from Jordan's theorem alone tends to $1$ as $d \rightarrow \infty$. Indeed, a randomly chosen set of $(d+1)!$ points on $S(\C^d)$ will have width tending to $1$ as $d \rightarrow \infty$. 

We turn now to an overview of the proof of Theorem \ref{mainthm} in the complex case. A natural first thing to try is to choose $w$ \emph{randomly} on $S(\C^d)$ using the normalised Haar measure. Unfortunately, this does not always work. One may already see this with as uncomplicated a group as $G = (\Z/2\Z)^d$, where the orbit $Gv$ is the set of all $(\pm v_1 \pm v_2 ,\dots, \pm v_d)$. Taking $v = d^{-1/2} \mathbf{1}$ (where $\mathbf{1}$ is the vector all of whose entries are $1$), by choosing signs appropriately we see that for any $w$ we have $\sup_{g \in G} |\langle gv, w\rangle| \geq \frac{1}{\sqrt{d}}\sum_{i =1}^d |\Re w_i|$. However, the distribution of each $w_i$ is roughly Gaussian with mean $0$ and variance $\frac{1}{d}$, so for almost every $w \in S(\C^d)$ we have $\sum_{i =1}^d |\Re w_i| \gg \sqrt{d}$.

However, if we allow $w$ to be sampled according to more general probability measures on $S(\C^d)$ (depending only on $G$, and not on $v$) then it turns out that progress is possible. With this in mind, we make the following definition.

\begin{definition}\label{def-2}
Let $\tildef(d)$ be the smallest function such that the following is true. For every finite group $G \leq \U_d(\C)$, there is a probability measure $\mu$ on $S(\C^d)$ such that 
\begin{equation}\label{sym-measure}  \int \sup_{g \in G} |\langle gv, w\rangle|^2 d\mu(w) \leq \tildef(d)^2\end{equation} for all $v \in S(\C^d)$.
\end{definition}

Note that $f(d)$ is well-defined because the space of probability measures on $S(\C^d)$ is closed under weak limits. Let us reiterate that $\mu$ can, and will, depend on $G$, but does not depend on $v$. 

The following is immediate.

\begin{lemma}\label{triv-lem} We have $f_{\C}(d) \leq \tildef(d)$.\end{lemma}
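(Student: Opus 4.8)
The plan is a one-line averaging argument. Fix a finite group $G \leq \U_d(\C)$ and a unit vector $v \in S(\C^d)$. By Definition~\ref{def-2}, there is a symmetric probability measure $\mu$ on $S(\C^d)$, depending only on $G$, with
\[
  \int \sup_{g \in G} |\langle gv, w\rangle|^2 \, d\mu(w) \leq \tildef(d)^2
\]
for every $v \in S(\C^d)$, in particular for the $v$ we have fixed. Since an integral against a probability measure is at least the essential infimum of the integrand, there is a point $w$ in the support of $\mu$ — and hence a genuine unit vector $w \in S(\C^d)$ — for which $\sup_{g \in G} |\langle gv, w\rangle|^2 \leq \tildef(d)^2$, i.e. $\sup_{g \in G} |\langle gv, w\rangle| \leq \tildef(d)$.

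To make ``there is a point'' rigorous I would observe that the function $w \mapsto \sup_{g \in G} |\langle gv, w\rangle|$ is continuous on $S(\C^d)$ (a finite supremum of continuous functions, as $G$ is finite), so its infimum over the compact support of $\mu$ is attained; this attained value is at most the $\mu$-average, which is at most $\tildef(d)$. Thus for every $G$ and every $v$ we have produced the required $w \in S(\C^d)$, so by Definition~\ref{def-1} we conclude $f_{\C}(d) \leq \tildef(d)$.

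There is no real obstacle here: the content is entirely in Definition~\ref{def-2} (which upgrades the existence of a good $w$ to the existence of a good \emph{measure} $\mu$ independent of $v$), and this lemma merely records the trivial direction that a good measure yields a good vector. The only point requiring a word of care is the passage from an averaged $L^2$ bound to a pointwise bound for a single $w$, which is handled by the compactness/continuity remark above.
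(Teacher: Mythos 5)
Your proof is correct and is essentially the paper's proof: both start from Definition~\ref{def-2} and use the trivial fact that a probability-measure average bounds the infimum, so some single $w$ must satisfy the pointwise bound. Your extra remark about continuity and compactness is a harmless bit of added rigour (one could also note directly that if the continuous integrand exceeded $\tildef(d)^2$ on all of the support of $\mu$, the integral would exceed $\tildef(d)^2$ as well), but the argument is the same.
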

\begin{proof}
Suppose we have a finite group $G \leq \U_d(\C)$ and some $v \in S(\C^d)$. Then, by definition, 
\[ \int \sup_{g \in G} |\langle gv, w\rangle|^2 d\mu(w) \leq \tildef(d)^2.\]
In particular, since $\mu$ is a probability measure, there exists some $w$ such that
\[ \sup_{g \in G} |\langle gv, w\rangle|^2  \leq \tildef(d)^2.\] This proves the result. 
\end{proof}

An important aspect of our proof is that it proceeds by induction, both on the integer $d$ and in the representation-theoretic sense, from subgroups\footnote{Implicitly; we use instead the language of ``systems of imprimitivity', and can avoid explicit discussion of induced representations.} of $G$. It does not seem to be possible to make such an argument work with $f_{\C}(d)$ directly, but the quantity $f(d)$ is well-suited to this approach, and moreover behaves well with regard to tensor products (see Section \ref{sec4}).

We will establish the following bound on $f(d)$, which turns out to be in a convenient form for our inductive argument.

\begin{theorem}\label{mainthm2}
For some absolute $c > 0$ we have $\tildef(d) \leq (1 + c \log d)^{-1/2}$ for all $d \geq 1$.
\end{theorem}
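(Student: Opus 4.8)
The plan is to prove Theorem \ref{mainthm2} by a double induction, both on the dimension $d$ and in a representation-theoretic sense, reducing any finite $G \leq \U_d(\C)$ to lower-dimensional pieces. The base case $d = 1$ is trivial (any $v \in S(\C^1)$ is a unit scalar, take $\mu = \frac{1}{2}(\delta_1 + \delta_{-1})$, or indeed anything works since $|\langle gv, w\rangle| = 1$ always — so actually $\tildef(1) = 1 = (1 + c \log 1)^{-1/2}$). The main case-split I would make is according to whether the representation $\rho : G \to \U_d(\C)$ is irreducible, and if irreducible, whether it is imprimitive (admits a system of imprimitivity, i.e.\ a decomposition $\C^d = V_1 \oplus \cdots \oplus V_k$ with $k > 1$ that $G$ permutes) or primitive.

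\emph{Reducible case.} If $\C^d = U \oplus U^\perp$ with both factors $G$-invariant and of dimensions $d_1, d_2 < d$, then I would try to build $\mu$ from the measures $\mu_1, \mu_2$ on $S(U), S(U^\perp)$ supplied by induction. Writing $v = \cos\theta \, v_1 + \sin\theta\, v_2$ with $v_i \in S(U_i)$, and sampling $w = \cos\phi\, w_1 + \sin\phi\, w_2$ with $w_i \sim \mu_i$ and $\phi$ from a suitable auxiliary distribution on $[0,\pi/2]$, one gets $|\langle gv, w\rangle| \leq \cos\phi |\langle gv_1, w_1\rangle| + \sin\phi |\langle gv_2, w_2\rangle|$. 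Bounding the $L^2$ norm of $\sup_g$ of this requires care since $\sup_g$ doesn't distribute over the sum, but one can bound $\sup_g(a_g + b_g)^2 \leq 2\sup_g a_g^2 + 2 \sup_g b_g^2$ or use a smarter split; the point is to get something like $\tildef(d)^2 \lesssim \max(\tildef(d_1)^2, \tildef(d_2)^2)$ up to acceptable losses, which is consistent with $(1 + c\log d)^{-1}$ since $\log d > \log d_i$. I expect the honest bookkeeping here — choosing the distribution of $\phi$ to balance the two terms and not lose more than a $(1 + c\log d)/(1 + c \log d_i)$ factor — to need the tensor/convexity lemmas promised for Section \ref{sec4}.

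\emph{Imprimitive case.} Here $G$ permutes blocks $V_1, \ldots, V_k$ transitively (WLOG, else pass to orbits and use the reducible case), each of dimension $m = d/k$, and the block stabiliser $H \leq G$ acts on $V_1$. This is the genuinely inductive representation-theoretic step: $v = \sum_j v_j$ with $v_j \in V_j$, $\sum \|v_j\|^2 = 1$, and I would sample $w$ concentrated on a single random block — pick a block $j$ uniformly (or by an optimized distribution), then sample within $V_j$ using the measure for the $H$-action on $V_1 \cong \C^m$. Since $g$ moves block $j$ to some block, $|\langle gv, w\rangle|$ only sees the one coordinate of $gv$ lying in $V_j$, whose norm is $\|v_{\sigma_g^{-1}(j)}\|$; averaging over the block choice replaces $\sup_g$ (partially) by an average of the $\|v_i\|^2$, which is $1/k$. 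Combined with the dimension-$m$ bound this should give roughly $\tildef(d)^2 \lesssim \frac{1}{k}\tildef(m)^2$ plus the contribution from how $g$ mixes within a block — and crucially $\frac{1}{k}(1 + c\log m)^{-1} \leq (1 + c\log(km))^{-1} = (1 + c\log d)^{-1}$ holds with room to spare, which is where the $\log$ (rather than something weaker) comes from: we are gaining a full factor $k$ but only paying a $\log$. This gain-of-$k$ versus cost-of-$\log k$ arithmetic is the conceptual heart.

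\emph{Primitive irreducible case.} This is the base case of the representation-theoretic induction and, I expect, the main obstacle. Here we cannot reduce dimension, so we must choose $w$ directly — and as the Introduction warns, uniform $w$ on $S(\C^d)$ fails for $(\Z/2\Z)^d$, but that group is imprimitive, so in the primitive case uniform sampling may actually be viable. The key fact to exploit is that a primitive $G$ acting irreducibly on $\C^d$ is "spread out": by a Jordan-type argument (Proposition \ref{jordan-consequence} and Collins's bounds) combined with the observation that in a primitive linear group the abelian normal subgroup must act by scalars, one should get that $|G|$ is not too large relative to... no — rather, the orbit $Gv$ cannot be too concentrated. I would aim to show that for a primitive irreducible $G$, a uniformly random $w$ satisfies $\E \sup_g |\langle gv, w\rangle|^2 \ll (\log d)/d$ or at least $\ll (\log d)^{-1}$, using that $|\langle gv, w\rangle|$ is sub-Gaussian with variance $1/d$ for each fixed $g$, a union bound over $g \in G$ costing $\log|G|$, and a structural bound showing $\log |G| \ll d \log d$ (or controlling the relevant covering number of $Gv$ directly via the primitivity, e.g.\ that no vector $w$ is near too many points of $Gv$ — this is essentially the content of Proposition \ref{jordan-consequence}'s torus-counting argument made quantitative). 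Pinning down the right structural input — perhaps that in the primitive case one has strong bounds on the number of $g$ with $|\langle gv, w\rangle| \geq t$, so that the union bound over $G$ can be replaced by a chaining/entropy argument referencing Proposition \ref{random-prop} — is the step I would budget the most effort for.
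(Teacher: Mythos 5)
Your trichotomy (reducible, imprimitive, primitive) and the two-level induction are exactly the paper's architecture, but the quantitative inequalities you anticipate in the first two cases would not close the induction, and the primitive case needs much heavier machinery than you suggest. For the reducible case your target $\tildef(d)^2 \leq \max(\tildef(d_1)^2,\tildef(d_2)^2)$ is backwards: writing $\eta(x):=(1+c\log x)^{-1/2}$, one has $\max(\eta(d_1)^2,\eta(d_2)^2) > \eta(d)^2$ for $d_1 + d_2 = d$, so a max-type bound makes things worse, not better. The paper establishes the harmonic-mean bound $\tildef(d)^2 \leq \tildef(d_1)^2\tildef(d_2)^2/(\tildef(d_1)^2+\tildef(d_2)^2)$, for which the needed inequality $\eta(x)\eta(y)/(\eta(x)^2+\eta(y)^2)^{1/2}\leq\eta(x+y)$ does hold. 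To reach that, one must exploit the \emph{symmetry} of $\mu_1,\mu_2$ under $w\mapsto -w$ so the cross term in $|\lambda_1\langle gu_1,w_1\rangle + \lambda_2\langle gu_2,w_2\rangle|^2$ integrates away, with the weights $\lambda_i$ tuned proportional to $\tildef(d_{3-i})$; the $(a+b)^2\leq 2a^2+2b^2$ fallback you mention loses a factor of $2$ that is already fatal at $d_1=d_2=1$. For the imprimitive case, ``gain a full factor $k$ but pay a $\log$'' is the reverse of the truth. Sampling $w$ inside a single uniformly random block gains nothing, because $\sup_g$ sits inside the integral: the adversary, seeing the chosen block, sends the heaviest component of $v$ into it, giving only $\tildef(d)\leq\tildef_{\prim}(d_2)$. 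The genuine gain is a factor of $\log d_1$, not $d_1$, and it comes from letting $w$ have mass in \emph{all} blocks with coefficients drawn from an almost-orthogonal family of geometrically increasing block supports, controlled by Selberg's inequality; this is $\tildef_{\perm}(d_1)^2 \ll (\log d_1)^{-1}$, and the induction closes through $\tildef(d)\leq\tildef_{\perm}(d_1)\tildef_{\prim}(d_2)$ together with $\eta(x)\eta(y)\leq\eta(xy)$.

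The primitive case has the largest gap. You propose uniform $w$ plus a union bound over $G$, with a structural input $\log|G|\ll d\log d$. But with that input the union bound forces $t\gg\sqrt{\log d}$, the wrong sign and power (one needs $t\ll(\log d)^{-1/2}$). The union bound only succeeds when $[G:Z_d\cap G]\leq e^{d/\log d}$, which is exactly Proposition \ref{random-prop}; the example $S_{d+1}$ on $\mathbf{1}^{\perp}$, of size roughly $e^{d\log d}$, shows this hypothesis genuinely fails for primitive groups. The paper does not try to rescue uniform $w$ by entropy methods. Instead it proves Proposition \ref{inverse-collins}, a structure theorem relying on CFSG (via the generalised Fitting subgroup, Collins's analysis, and the Landazuri--Seitz degree bounds) showing that in the large-index regime $G$ has a normal $A_n$ with $n\gg d/\log^4 d$; one then factors a suitable irreducible constituent of the representation through a tensor product and feeds the $A_n$-factor to the explicit permutation measure of Proposition \ref{permprop-2}. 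A chaining argument in place of the union bound would still require quantitative control on the orbit $Gv$ of comparable strength to this structure theorem, so it seems unlikely to bypass the group theory or CFSG.
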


In view of Lemma \ref{triv-lem}, this immediately implies Theorem \ref{mainthm}. \vspace{8pt}

\emph{The primitive case.} There is one case in which it is not possible to proceed inductively. 
\begin{definition}
Let $G \leq \U_d(\C)$. Then we say that $G$ is \emph{imprimitive} if there is a \emph{system of imprimitivity} for $G$, that is to say a nontrivial direct sum decomposition $\C^d = \bigoplus_{i = 1}^k V_i$ (with nontrivial meaning that $0 < \dim V_i < d$) such that the $V_i$ are permuted by the action of $G$. If $G$ is not imprimitive then we say that it is primitive.
\end{definition}

\emph{Remarks.} Primitivity, like irreducibility, is not an intrinsic property of the group $G$ but rather of the action of $G$ on $\C^d$ (in other words, it is a property of \emph{representations} rather than groups).

Note in particular that if $G$ is primitive then it is irreducible (that is, no proper subspace of $\C^d$ is fixed by $G$). Indeed, if $G$ is reducible then there is a proper subspace $V$ which is fixed by $G$. Its orthogonal complement $V^{\perp}$ will also be fixed by $G$, and so the action of $G = V \oplus V^{\perp}$ is a system of imprimitivity for $G$.

\begin{definition}
We define $\tildef_{\prim}(d)$ in the same way as $\tildef(d)$ (Definition \ref{def-2}) but with $G$ ranging only over \emph{primitive} groups $G \leq \U_d(\C)$. 
\end{definition}

A large part of our paper will be devoted to the primitive case, that is to say the proof of the following proposition.

\begin{proposition}\label{mainprop}
For some absolute $c > 0$ we have $\tildef_{\prim}(d) \leq (1 + c \log d)^{-1/2}$ for all $d \geq 1$.
\end{proposition}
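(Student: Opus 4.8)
The plan is to split the primitive groups into two regimes by size. If $|G| \le \exp(c_0 d/\log d)$ for a suitable small absolute constant $c_0$, then a uniformly random $w$ already works, and this is precisely what Proposition \ref{random-prop} should give: for a fixed unit vector $x$ the measure of $\{w : |\langle x,w\rangle| > t\}$ is of order $(1-t^2)^{d}$, so a union bound over the orbit $Gv$ keeps $\sup_{g}|\langle gv,w\rangle|$ comfortably below $(\log d)^{-1/2}$. All of the work is therefore in understanding which primitive $G \le \U_d(\C)$ fall outside this regime and in treating those by hand, and this is where the structure theory of primitive linear groups enters.

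First I would extract from primitivity the standard structural consequences. A primitive group is irreducible, and, combining this with Jordan's theorem, every abelian normal subgroup of $G$ acts by scalars — otherwise its isotypic decomposition would be a system of imprimitivity — so $Z(G)$ is precisely the group of scalars in $G$ and $|G/Z(G)| \le F(d)$. Since scalars are irrelevant in Definition \ref{def-2} we may pass to $G/Z(G)$. By the structure theory of primitive linear groups (the generalized Fitting subgroup together with Clifford theory), $\C^d$ is, as a module for a suitable normal subgroup of $G$, a tensor product $V_1 \otimes \cdots \otimes V_m$ whose factors are acted on quasi-primitively, each factor being controlled either by a group of extraspecial (symplectic) type or by an almost quasisimple group. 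Using the behaviour of $\tildef$ under tensor products and tensor-induction from Section \ref{sec4}, which manufactures a measure on $S(V_1 \otimes \cdots \otimes V_m)$ out of measures on the $S(V_i)$, together with induction on $d$, the proposition reduces to the tensor-indecomposable cases: $G$ of extraspecial type on $\C^{p^k}$, or $G$ almost quasisimple acting primitively.

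One then checks that in all these cases but one, $d$ forces $|G| \le \exp(c_0 d/\log d)$, so that Proposition \ref{random-prop} applies. For groups of extraspecial type on $\C^{p^k}$ this is a pure order estimate ($|G| \le p^{O(k^2)}$, with $d = p^k$). For almost quasisimple $G$ with socle $L$ it uses the classification of finite simple groups and the known lower bounds for degrees of faithful projective representations: for $L$ of Lie type the smallest faithful degree is a fixed power of the defining field size while $|L|$ is a bounded power of it, and for $L$ alternating every faithful irreducible other than the standard module has degree $\gg n^2$ while $|\operatorname{Alt}(n)| = \exp(O(n\log n))$, so $|G|$ (which exceeds $|L|$ only by a factor $|\Out(L)| \le d^{O(1)}$) lies below the threshold. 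The one essential exception is $G \in \{\operatorname{Alt}(n), \Sym(n)\}$ acting on the $(n-1)$-dimensional standard module. Finitely many groups of small degree violate the asymptotic inequalities, and these, together with all $d$ below any fixed bound, are absorbed into the final constant $c$ using that by Jordan's theorem there are for each $d$ only finitely many primitive subgroups of $\U_d(\C)$ up to conjugacy and scalars, each of which has $\tildef_{\prim}$-value strictly below $1$ when $d \ge 2$ (the set of $w$ with $\sup_g |\langle gv, w\rangle| = 1$ being a finite union of great circles, hence of measure zero).

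It remains to treat $G \in \{\operatorname{Alt}(n), \Sym(n)\}$ on the standard module, where $|G| = \exp(\Theta(n\log n))$ is hopelessly large for a random $w$ and where, by an example like the sharpness example of Section \ref{sec1}, the bound $\tildef(d) \gg (\log d)^{-1/2}$ is actually attained. Here I would construct $\mu$ explicitly as a dyadic mixture over scales: for each $k = 2, 4, 8, \dots$ up to $n$ take $w$ uniform among unit vectors supported on a random $k$-element set of coordinates, with entries $\pm k^{-1/2}$ in a balanced (mean-zero) sign pattern, and average these over $k$. For such $w$ the rearrangement inequality converts $\sup_\sigma |\langle \sigma v, w\rangle|$ into an expression in the partial sums $T_j(v) = \sum_{i \le j} v^{\downarrow}_i$ of the decreasingly sorted coordinates of $v$, and $\int \sup_\sigma |\langle \sigma v, w\rangle|^2 \, d\mu$ becomes, up to absolute constants, $\frac{1}{\log n} \sum_k k^{-1}\bigl(2T_{k/2}(v) - T_k(v)\bigr)^2$; one then wants the one-dimensional estimate $\sum_k k^{-1}\bigl(2T_{k/2}(v) - T_k(v)\bigr)^2 \ll \|v\|_2^2$ for every mean-zero $v$, a Hardy-type convexity inequality exploiting that $j \mapsto T_j(v)$ is concave with $T_0(v) = T_n(v) = 0$. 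I expect the two genuine obstacles to be: (i) pinning down the precise quantitative form of the structure theory of primitive linear groups that delivers the clean dichotomy ``$|G| \le \exp(O(d/\log d))$, or $\operatorname{Alt}(n)$ or $\Sym(n)$ on the standard module'', and making the reduction to a single tensor factor cohere with the induction on $d$; and (ii) the alternating/symmetric case, which is tight and therefore offers no slack, so the dyadic construction and its companion one-dimensional inequality must be essentially optimal, with extra care needed for the two sortings of $w$ and for the mean-zero constraint.
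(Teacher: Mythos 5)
Your high-level strategy matches the paper's: split into the regime where $[G:Z_d\cap G]$ is small enough that a uniformly random $w$ works (this is exactly Proposition \ref{random-prop}), use the structure theory of primitive linear groups (generalized Fitting subgroup, extraspecial and quasisimple components, CFSG, Landazuri--Seitz) to show that the only way to escape that regime in a primitive group is to involve a large alternating group, and treat the alternating case by an explicit measure built out of dyadic scales. That is the right shape, and the order estimates you sketch in the quasisimple and extraspecial cases are the ones the paper uses. However, there are several places where the proposal glosses over difficulties that the paper has to deal with head-on, and at least one place where your computation as written is not correct.

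First, the structural conclusion you want --- that outside the random regime $G$ is literally $\mathrm{Alt}(n)$ or $\mathrm{Sym}(n)$ acting on the standard module --- is too clean. What the structure theory actually gives (Proposition \ref{inverse-collins} in the paper) is that $G$ has a \emph{normal subgroup} isomorphic to $A_n$ with $n \gg d/\log^4 d$; there is in general a further factor $H$, and the faithful irreducible constituent of $\C^d$ for the relevant normal subgroup is a tensor product $\psi\otimes\psi'$ with $\psi$ the $(n-1)$-dimensional standard representation and $\psi'$ a representation of $H$ of a priori uncontrolled dimension. You flag the ``reduction to a single tensor factor'' as an obstacle but then lean on an unstated multiplicativity of $\tildef$ under tensor products. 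No such multiplicativity is proved or used in the paper, and it is not clear it holds: the paper's argument instead uses a Schmidt decomposition $\iota v_1=\sum_j x_j\otimes x_j'$ together with \emph{Haar} measure on the $H$-factor, so that cross terms vanish and the trivial bound $|\langle\psi'(h)x_j',y'\rangle|\le 1$ suffices. The bound obtained is simply the one from the $A_n$-factor, not a product of two bounds, and this one-sided use of the tensor structure is essential.

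Second, your explicit measure in the alternating case has a genuine gap: the rearrangement reduction to partial sums $T_j(v)$ tacitly treats $v$ as real, but here $v$ ranges over $S(\C^n)\cap\mathbf{1}^\perp$ and only $S_n$ (no phases) is available, so $\sup_\sigma|\langle\sigma v,w\rangle|$ is not a rearrangement of reals. The paper's way around this is the pair of Propositions \ref{permprop} and \ref{permprop-2}: first build a measure for the \emph{larger} monomial group $\Gamma_d$ of permutation matrices with unit-modulus entries (where $v$ can be rotated to be real nonnegative and sorted), then project that measure to $\mathbf{1}^\perp$ and observe that $\sup_{S_d}\le\sup_{\Gamma_d}$, absorbing the $O(d^{-1/4})$ error from projection. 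Your $\pm k^{-1/2}$ balanced-sign construction does not have an obvious analogue of this trick; also, with a balanced sign pattern the true sup over $\sigma\in S_n$ of $\langle\sigma v,w\rangle$ for real sorted $v$ aligns the top $k/2$ coordinates with $+$ and the \emph{bottom} $k/2$ with $-$, i.e.\ gives $k^{-1/2}\bigl(T_{k/2}(v)+T_{n-k/2}(v)\bigr)$, not $k^{-1/2}\bigl(2T_{k/2}(v)-T_k(v)\bigr)$. Finally, the companion one-dimensional inequality you need is left unproved; the paper gets the corresponding estimate from Selberg's almost-orthogonality inequality applied to the vectors $e_i$ with $\langle e_i,e_j\rangle=2^{-|i-j|/2}$, which is what produces the $O(1)$ rather than $O(\log n)$ bound. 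Until these points are resolved --- the residual factor $H$, the complex-versus-real issue in the permutation case, and the almost-orthogonality estimate --- the proposal does not yet constitute a proof, though it is aimed in the right direction.
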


As already stated, induction is of no use here. Instead, we must analyse the structure of primitive subgroups of $\U_d(\C)$. Here we use some arguments of Collins \cite{collins1}, controlling such groups by their generalised Fitting subgroups $F^*(G)$ (we will give the definitions later). In the ensuing analysis we must, unfortunately, make an appeal to the Classification of Finite Simple Groups (CFSG): roughly, we need to know that the alternating groups are the only nonabelian finite simple groups $\Gamma$ with a nontrivial representation of degree $\leq (\log |\Gamma|)^{O(1)}$. (We also need some other consequences.)

 \vspace{8pt}

\emph{Permutation groups.} Permutations arise in both the imprimitive case (because imprimitive groups permute the summands of a system of imprimitivity) \emph{and} as an example of the primitive case (because the action of the symmetric group $S_d$ on $\mathbf{1}^{\perp} = \{z \in \C^d : z_1 + \dots + z_d = 0\}$ is primitive).

 Write $\Gamma_d \leq \U_d(\C)$ for the (infinite) group consisting of all permutation matrices with entries of absolute value $1$. Thus the orbit $\Gamma_d v$ consists of all vectors $(\lambda_1 v_{\pi(1)},\dots, \lambda_d v_{\pi(d)})$ with $|\lambda_1| = \dots = |\lambda_d| = 1$. Then we define $\tildef_{\perm}(d)$ as for $\tildef(d)$, but with $G$ replaced by $\Gamma_d$:

\begin{definition}\label{def-3}
Let $\tildef_{\perm}(d)$ be the smallest function such that the following is true. There is a probability measure $\mu_{\Gamma_d}$ on $S(\C^d)$ such that 
\[  \int \sup_{g \in \Gamma_d} |\langle gv, w\rangle|^2 d\mu_{\Gamma_d}(w) \leq \tildef_{\perm}(d)^2\] for all $v \in S(\C^d)$.
Written explicitly, 
\[  \int \sup_{\pi \in S_d} \big(\sum_{i = 1}^d |v_{\pi(i)} w_{i}| \big)^2d\mu_{\Gamma_d}(w) \leq \tildef_{\perm}(d)^2\] for all $v \in S(\C^d)$.
\end{definition}

\begin{proposition}\label{permprop}
We have $\tildef_{\perm}(d) \leq (1 + c \log d)^{-1/2}$ for some absolute constant $c > 0$. 
\end{proposition}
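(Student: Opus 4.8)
The plan is to eliminate the group $\Gamma_d$ by means of the rearrangement inequality, and then to exhibit an explicit finitely supported measure $\mu_{\Gamma_d}$ that hedges simultaneously against all the ``dyadic scales'' on which the mass of $v$ might sit.

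\emph{Step 1: reduction to a rearrangement problem.} Writing $g = DP$ with $P$ the permutation matrix of some $\pi \in S_d$ and $D = \operatorname{diag}(\lambda_1,\dots,\lambda_d)$, $|\lambda_i| = 1$, we have $|\langle gv, w\rangle| = \big| \sum_i \lambda_i v_{\pi(i)} \overline{w_i} \big|$, and choosing the phases $\lambda_i$ optimally gives
\[
  \sup_{g \in \Gamma_d} |\langle gv, w\rangle| \;=\; \sup_{\pi \in S_d} \sum_{i=1}^d |v_{\pi(i)}|\,|w_i| \;=\; \sum_{i=1}^d a_i b_i,
\]
where the last equality is the rearrangement inequality and $a_1 \geq \dots \geq a_d \geq 0$, $b_1 \geq \dots \geq b_d \geq 0$ are the decreasing rearrangements of $(|v_i|)_i$ and $(|w_i|)_i$; in particular $\sum_i a_i^2 = \sum_i b_i^2 = 1$. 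So it suffices to find a symmetric probability measure $\mu_{\Gamma_d}$ on $S(\C^d)$ with $\int \big( \sum_i a_i b_i(w) \big)^2 d\mu_{\Gamma_d}(w) \leq \tildef_{\perm}(d)^2$ for every nonincreasing unit vector $a \in \R^d$ with nonnegative entries, $b(w)$ denoting the decreasing rearrangement of $(|w_i|)_i$.

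\emph{Step 2: the multi-scale measure.} Put $L := \lfloor \log_2 d \rfloor + 1$ and let $\mu_{\Gamma_d}$ be the uniform probability measure on the $2L$ points $\pm\, 2^{-j/2}\big( \underbrace{1,\dots,1}_{2^j}, 0, \dots, 0 \big)$, $j = 0, 1, \dots, L-1$ (one could additionally average over the position of the block of $1$'s, but there is no need). Then $\mu_{\Gamma_d}$ is symmetric, and each such vector equals its own decreasing rearrangement, so with $A_j := a_1 + \dots + a_{2^j}$,
\[
  \int \Big( \sum_i a_i b_i(w) \Big)^2 d\mu_{\Gamma_d}(w) \;=\; \frac{1}{L} \sum_{j=0}^{L-1} \frac{A_j^2}{2^j}.
\]
It is therefore enough to establish the scale-free inequality $\sum_{j=0}^{L-1} A_j^2 / 2^j \leq C$ for an absolute constant $C$; since $L > \log d$, this yields $\tildef_{\perm}(d)^2 \ll 1/\log d$, and the precise shape $(1 + c\log d)^{-1/2}$ follows after shrinking $c$ and disposing of the finitely many small $d$ directly (by compactness, as in the proof of Proposition \ref{jordan-consequence}; note $\tildef_{\perm}(1) = 1 = (1 + c\log 1)^{-1/2}$).

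\emph{Step 3: the Hardy-type estimate, and the main obstacle.} What is left is $\sum_{j \geq 0} A_j^2 / 2^j \lesssim \sum_i a_i^2$, valid for all nonnegative sequences $a$; this is a discrete weighted Hardy inequality and is the step I expect to require the most care, although it is elementary. One decomposes $\{1,\dots,2^j\}$ into dyadic blocks $I_\ell$ with $|I_\ell| \leq 2^\ell$, applies Cauchy--Schwarz on each block to get $\sum_{i \in I_\ell} a_i \leq 2^{\ell/2} \sigma_\ell$ where $\sigma_\ell^2 := \sum_{i \in I_\ell} a_i^2$, so that $A_j \leq \sum_{\ell \leq j} 2^{\ell/2} \sigma_\ell$; a second Cauchy--Schwarz with geometric weights $2^{-|j-\ell|/4}$ then gives $A_j^2 \lesssim 2^j \sum_{\ell \leq j} 2^{-(j-\ell)/2} \sigma_\ell^2$, and summing the geometric series in $j$ collapses the whole sum to $\lesssim \sum_\ell \sigma_\ell^2 = \sum_i a_i^2 = 1$ (alternatively one may simply quote a standard weighted Hardy inequality). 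In truth the only inventive point in the argument is the choice of $\mu_{\Gamma_d}$ in Step 2: the naive candidates — a single scale, or Haar measure on $S(\C^d)$ — both fail, precisely because the mass of $v$ may be spread over anywhere from one to $d$ coordinates, and insuring against all of these possibilities at once costs, and costs only, the factor $L$, which is of size $\log d$.
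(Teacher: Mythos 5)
Your proposal is correct, and the measure you construct is essentially identical to the one in the paper: a uniform mixture of the vectors $\pm 2^{-j/2}(1,\dots,1,0,\dots,0)$ with $2^j$ ones, over a logarithmic number of dyadic scales $j$. (The paper takes $m \approx \tfrac12\log_2 d$ scales rather than your $L\approx\log_2 d$; this is immaterial.) Your Step~1 reduction to the quantity $\sum_j A_j^2/2^j$, with $A_j=a_1+\dots+a_{2^j}$ the partial sums of the decreasing rearrangement, also matches the paper, since $|\langle v', e_j\rangle|^2 = A_j^2/2^j$.

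Where the two arguments genuinely differ is in Step~3: how to prove $\sum_{j} A_j^2/2^j \leq C$ uniformly in the unit sequence $(a_i)$. You prove a discrete weighted Hardy inequality by hand, decomposing $\{1,\dots,2^j\}$ into dyadic blocks, applying Cauchy--Schwarz on each block, and then Cauchy--Schwarz again with geometric weights to sum the blocks; I checked the manipulations and they close up correctly, giving $\sum_j A_j^2/2^j \lesssim \sum_\ell \sigma_\ell^2 = 1$. The paper instead observes that the test vectors $e_j$ form an almost-orthogonal family, with $\langle e_i,e_j\rangle = 2^{-|i-j|/2}$, and invokes \emph{Selberg's inequality} (a Bessel-type inequality for almost-orthogonal systems): $\sum_j |\langle v,e_j\rangle|^2 \leq \sup_i \sum_j |\langle e_i,e_j\rangle| = \sum_{n\in\Z} 2^{-|n|/2} = 3+2\sqrt{2}$. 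The two routes prove the same estimate and are comparable in length; Selberg's inequality is slicker once known and even gives an explicit constant, whereas your Hardy argument is self-contained. One small loose end in your write-up: invoking compactness ``as in Proposition~\ref{jordan-consequence}'' does not quite apply verbatim, since $\Gamma_d$ is infinite and that proposition concerns finite $G$; but it is easy to see directly from your formula that $\frac1L\sum_{j<L} A_j^2/2^j < 1$ whenever $L\geq 2$ (each term is $\leq 1$ and they cannot all be $1$ simultaneously), which is all that is needed to absorb small $d$ into the constant $c$.
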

The proof of this is given in Section \ref{perm-sec}. The following variant, in which we can (with a negligible cost) assume that $\mu_{\Gamma_d}$ is supported on $\mathbf{1}^{\perp} \subset \C^d$, is needed in the analysis of the primitive case.

\begin{proposition}\label{permprop-2}
There is a probability measure $\mu^*_{\Gamma_d}$ on $S(\C^d)$, supported on $\mathbf{1}^{\perp}$, such that 
\[ \int \sup_{\pi \in S_d} \big(\sum_{i = 1}^d |v_{\pi(i)} w_i | \big)^2 d \mu^*_{\Gamma_d}(w) \ll \frac{1}{\log d} \Vert v \Vert^2\] for all $v \in \C^d$.
\end{proposition}

The proof of this is also given in Section \ref{perm-sec}. \vspace{8pt}

\emph{Representation theory.} In this paper, ``representation'' will always mean (finite-dimensional) unitary representation, or in other words a homomorphism $\rho : G \rightarrow \U(V)$ into the space of unitary endomorphisms of some finite-dimensional hermitian space $V$. Of course, by Weyl's unitary trick every complex representation is equivalent to a unitary one. A somewhat less well-known fact to which we will appeal is that ``equivalent unitary representations are unitarily equivalent'': see Lemma \ref{unitary-equivalence} for a statement and proof in the irreducible case. This allows us to operate entirely within the world of unitary representations.

\section{Induction on dimension and on the group}

The aim of this section is to carry out the induction procedure discussed in the introduction, deducing Theorem \ref{mainthm2} (and hence Theorem \ref{mainthm}) from Propositions \ref{mainprop} and \ref{permprop}. The proof of these two propositions will occupy the remainder of the paper. 

The key result is Proposition \ref{inductive-prop} below. In this proposition, we define $\tildef_{\irred}(d)$ in the same way as $\tildef(d)$ (Definition \ref{def-2}) but with $G$ ranging only over groups acting \emph{irreducibly} on $\U_d(\C)$. 

\begin{proposition}\label{inductive-prop} We have
\begin{equation}\label{eq-ind} \tildef_{\irred}(d) \leq  \max_{\substack{ d_1d_2 = d }} f_{\perm}(d_1)  f_{\prim}(d_2) ,\end{equation}
where $d_1$ and $d_2$ range over positive integers.
\end{proposition}

We begin by showing that this does provide an inductive proof of our main theorem, Theorem \ref{mainthm2}, given Propositions \ref{mainprop} and \ref{permprop}.

\begin{proof}[Proof of Theorem \ref{mainthm2}, assuming Propositions \ref{mainprop}, \ref{permprop} and \ref{inductive-prop}.]
Let $c$ be the constant appearing in Propositions \ref{mainprop} and \ref{permprop}. For real $x \geq 1$, set $\eta(x) := (1 + c\log x)^{-1/2}$. Observe the inequality
\begin{equation}\label{eta-2}
\eta(x) \eta(y) \leq \eta(xy)
\end{equation}
for all $x, y \geq 1$. Indeed, a short manipulation shows that this is equivalent to
\[ 1 + c \log x + c\log y \leq (1 + c \log x)(1 + c \log y),\] which is immediate (for any $c> 0$) upon expanding out the right hand side. 

It follows immediately from this (assuming Propositions \ref{mainprop}, \ref{permprop} and \ref{inductive-prop}) that 
\begin{equation}\label{irred-case} \tildef_{\irred}(d) \leq \eta(d) = (1 + c \log d)^{-1/2}.\end{equation}
This establishes Theorem \ref{mainthm2} in the case that $G$ acts irreducibly. We now show that the general case follows from the irreducible one by proving the following inequality, valid for any positive integer $m$:
\begin{equation}\label{irred-ineq} \tildef(d) \leq \max (\frac{1}{\sqrt{m}}, \max_{d' \geq d/m} \tildef_{\irred} (d')).\end{equation}
Taking $m = \log^{10} d$ (say), one immediately sees that the general case of Theorem \ref{mainthm2} indeed follows from \eqref{irred-case} (with a correction to lower order terms, but essentially the same constant $c$).

It remains to prove \eqref{irred-ineq}. I thank Ashwin Sah, Mehtaab Sawhney and Yufei Zhao for showing me \eqref{irred-ineq} and its proof, which replaces an incorrect argument in an earlier version of this paper.  

The key observation is that (by Maschke's theorem) $\C^d$ has a decomposition $\bigoplus V_i$ into orthogonal irreducible representations of $G$. There are either at least $m$ of them (\emph{case 1}) or one of them has dimension at least $d/m$ (\emph{case 2}). 

In case 1, we proceed as follows. For $i \leq m$, pick an arbitrary unit vector $w_i$ in $V_i$,  and let $\mu = \frac{1}{m} \sum_{i =1}^m \delta_{w_i}$, where $\delta_w$ is the measure with mass $1$ at the point $w$. Let $v \in \C^d$ be a unit vector, and suppose that $v = \sum v_i$ is the decomposition of $v$, with $v_i \in V_i$. 

If $g \in G$ then we have $\langle gv, w_i\rangle = \langle g v_i, w_i\rangle$ by orthogonality, and therefore 
\[ \sup_{g \in G} |\langle gv, w_i\rangle |^2 \leq \sup_{g \in G} \Vert g v_i \Vert^2 \Vert w_i \Vert^2 \leq \Vert v_i \Vert^2.\]
It follows that 
\[ \int \sup_{g \in G} |\langle gv, w \rangle|^2 d \mu(w) \leq \frac{1}{m} \sum_{i = 1}^m \Vert v_i \Vert^2 \leq \frac{1}{m}.\] This is the first of the bounds in \eqref{irred-ineq}.

In case 2, suppose that $V_1$ is an irreducible summand of $V$ with maximal dimension $d_1 \geq d/m$.
Let $\mu_1$ be a measure on unit vectors in $V_1$ such that 
\[ \int \sup_{g \in G} |\langle gv_1, w_1\rangle |^2 d\mu_1(w_1) \leq \tildef_{\irred}(d_1).\]
Let $\mu$ be the pushforward of $\mu_1$ under the inclusion map $\iota : V_1 \rightarrow V$. 

Let $v = \sum v_i$ be a unit vector in $V$. Then
\begin{align*} \int \sup_{g \in G} |\langle g v, w \rangle |^2 d\mu(w) & = \int \sup_{g \in G} |\langle g v, w_1 \rangle |^2 d\mu_1(w_1) \\ & = \int \sup_{g \in G} |\langle g v_1, w_1 \rangle |^2 d\mu_1(w_1) \\ & \leq \tildef_{\irred}(d_1) \Vert v_1 \Vert^2 \\ & \leq \max_{d' \geq d/m} \tildef_{\irred}(d'),\end{align*}
where in the last step we used that $\Vert v_1 \Vert^2 \leq 1$.

This completes the proof of \eqref{irred-ineq}, and hence of Theorem \ref{mainthm2}, assuming Propositions \ref{mainprop}, \ref{permprop} and \ref{inductive-prop}.
\end{proof}

The remainder of the section is devoted to establishing Proposition \ref{inductive-prop}.

\begin{proof}[Proof of Proposition \ref{inductive-prop}]
Let $G \leq \U_d(\C)$. Noting that the right-hand side of \eqref{eq-ind} is at least $\tildef_{\prim}(d)$ (since $f_{\perm}(1) = 1$), it suffices to deal with the case in which $G$ is not primitive. Assume, then, that $G$ is irreducible, but not primitive. Let $\C^d = \bigoplus_{i = 1}^{d_1} V_i$ be a system of imprimitivity. I feel that the following lemma must surely be known in the literature (and probably goes back to Frobenius), but I do not know\footnote{I asked for a reference for this on Math Overflow \cite{overflow-post}, but none has so far been forthcoming. Paul Broussous did provide an alternative (though related) proof similar to that of Lemma \ref{unitary-equivalence} which some readers may find more natural.} a source.

\begin{lemma}\label{orth}
Let $G \leq \U_d(\C)$ be irreducible and imprimitive, with $\C^d = \bigoplus_{i = 1}^r V_i$ a system of imprimitivity. Then the $V_i$ are orthogonal.
\end{lemma}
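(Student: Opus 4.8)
The plan is to use the irreducibility of $G$ to show that for any two blocks $V_i$ and $V_j$, either they are orthogonal or the orthogonal projection between them is a nonzero $G$-equivariant-up-to-permutation map whose existence forces a contradiction with the block structure. Concretely, write $P_i : \C^d \to V_i$ for the orthogonal projection onto $V_i$, and consider the subspace $W = \sum_{i} (\text{sum of all } V_j \text{ not orthogonal to } V_i)$ type of construction. More cleanly, I would first show that the relation ``$V_i$ is not orthogonal to $V_j$'' generates an equivalence relation on the index set $\{1,\dots,r\}$ that is respected by the permutation action of $G$ on the blocks, and that the span of each equivalence class is $G$-invariant; by irreducibility there is only one class, so I need to rule out the case $r \geq 2$ with all blocks mutually non-orthogonal in a connected sense.

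The key mechanism: suppose $\langle v, v' \rangle \neq 0$ for some $v \in V_i$, $v' \in V_j$ with $i \neq j$. The map $T = P_i \circ P_j : V_j \to V_i$ is then nonzero (its adjoint composed with itself detects $\langle P_i v', P_i v'\rangle$ via the pairing). Now for $g \in G$, since $g$ permutes the blocks, $g V_j = V_{\sigma(g)(j)}$ and $g V_i = V_{\sigma(g)(i)}$, and because $g$ is unitary we get a compatibility $P_{\sigma(g)(i)} \circ g = g \circ P_i$ on all of $\C^d$ (unitaries intertwine orthogonal projections onto invariant-up-to-the-map subspaces). Iterating, the collection of maps $\{P_a \circ P_b : a \neq b\}$ is permuted by conjugation by $G$. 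I would then build a genuine $G$-invariant proper nonzero subspace: for instance, let $W \subseteq \bigoplus_i V_i$ be the span of all vectors of the form $v - c\, P_i(v)$ for appropriate constants...

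Actually the cleanest route, which I expect the author takes: fix $i$ and set $W_i = \bigoplus_{j : V_j \not\perp V_i} V_j$. Claim $W_i$ is $G$-invariant. Indeed if $g V_i = V_k$ then $g$ being unitary sends $\{V_j : V_j \not\perp V_i\}$ bijectively to $\{V_j : V_j \not\perp V_k\}$, so $g W_i = W_k$; but then $W := \sum_{i} $ over a single orbit... hmm, one needs $W_i$ itself invariant, which requires the orbit of $i$ under $\sigma(G)$ to stay within $\{j : V_j \not\perp V_i\}$. That is not automatic. So instead define the equivalence relation $\approx$ on $\{1,\dots,r\}$ generated by $i \sim j$ iff $V_i \not\perp V_j$ (transitive closure), note it is $G$-stable since $g$ unitary preserves non-orthogonality and permutes blocks, hence each $\approx$-class spans a $G$-invariant subspace; irreducibility forces one class. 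Then among blocks in a single $\approx$-class I'd argue that the pieces glue: pick $i,j$ with $V_i\not\perp V_j$, get the nonzero partial isometry-like map $T:V_j\to V_i$, use unitarity of the $g$'s to transport it around all of $\C^d$, and derive that actually $\C^d$ cannot decompose into $\geq 2$ blocks — e.g. by showing $\sum_j V_j$ and a diagonal-type subspace coincide, or by exhibiting a $G$-invariant subspace strictly between $0$ and $\C^d$ built from the graph of $T$.

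The hard part will be extracting the contradiction in the connected case — i.e., showing that if all blocks lie in a single $\approx$-class (pairwise-connected by non-orthogonality) then $r=1$. I anticipate this is where unitarity is used most essentially: from $P_i g = g P_{g^{-1}V_i \text{ index}}$ one gets that the various projections are all conjugate and that $\|P_i v'\|$ for $v' \in V_j$ is constrained; pushing this, the subspace $V_i$ would have to meet $V_j$ nontrivially or the orthogonal complement of $\bigoplus_{j\neq i}V_j$ (which is $V_i$) would be forced — giving $V_i \cap V_j \neq 0$, contradicting directness of the sum. I would present the equivalence-relation reduction fully and then handle the connected case by this transport-of-projections argument, which should be short once set up correctly.
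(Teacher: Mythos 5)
There is a genuine gap. Your plan has two separate problems, one structural and one fatal.

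The structural problem is in the equivalence-relation reduction. You claim that each $\approx$-class spans a $G$-invariant subspace and that irreducibility therefore forces a single class. This is not right: $g$ preserves the relation $\not\perp$ (since $g$ is unitary), so $g$ permutes the $\approx$-classes, but it need not fix each class. What irreducibility gives you is that $G$ acts \emph{transitively} on the classes, not that there is only one. (Indeed, in the orthogonal case one has $r$ singleton classes, permuted transitively by $G$, and that is not a contradiction.) So the reduction to the ``connected'' case is not valid as stated.

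The fatal problem is that you never actually deliver the contradiction you say you need. Your last paragraph explicitly defers the key step (``The hard part will be extracting the contradiction in the connected case \dots which should be short once set up correctly''). That is the whole lemma; nothing concrete is proved.

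For comparison, the paper's proof is a single Schur's-lemma computation and avoids all of this. Crucially, it uses the projections $\pi_i$ \emph{induced by the direct sum decomposition} (so $\pi_i|_{V_j} = \delta_{ij}\,\mathrm{id}$, but $\pi_i$ is not self-adjoint unless the $V_i$ are already orthogonal), not the orthogonal projections $P_i$. It sets $\phi = \sum_i \pi_i^*\pi_i$, checks directly that $\phi$ is $G$-equivariant because $G$ permutes the blocks, so Schur's lemma gives $\phi = \lambda\,\mathrm{id}$ with $\lambda \neq 0$; the identity $\langle \phi x, y\rangle = 1_{i=j}\langle x,y\rangle$ for $x\in V_i$, $y\in V_j$ then kills all off-diagonal inner products.

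It is worth noting that the one genuinely correct observation in your sketch, namely that $g P_i g^{-1} = P_{\sigma_g(i)}$ because $g$ is unitary, is already enough for a different clean proof which you did not pursue: $\Phi := \sum_i P_i$ is then $G$-equivariant, so by Schur's lemma $\Phi = \lambda\,\mathrm{id}$; taking traces gives $\lambda d = \sum_i \dim V_i = d$, so $\Phi = \mathrm{id}$; and then for $x \in V_i$ one has $x = P_i x + \sum_{j\neq i} P_j x$ with $P_i x = x$, so $\sum_{j\neq i} P_j x = 0$ with $P_j x \in V_j$, and since the $V_j$ are in direct sum each $P_j x = 0$, i.e.\ $V_i \perp V_j$. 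Had you carried the transport-of-projections idea through to this point, you would have had a complete argument, parallel in spirit to the paper's but using $P_i$ in place of $\pi_i$. As written, though, the proof is incomplete.
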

\begin{proof}
Write $\pi_i : \C^d \rightarrow V_i$ for the projection induced by the direct sum decomposition $\C^d = \bigoplus_{i = 1}^r V_i$, thus $x = \sum_{i = 1}^r \pi_i(x)$ for all $x$. Define a map $\phi : \C^d \rightarrow \C^d$ by $\phi(x) = \sum_{i = 1}^r \pi_i^* \pi_i (x)$. Note that if $x \in V_i$ and $y \in V_j$ then
\begin{equation}\label{eq109} \langle \phi(x), y \rangle = \langle \pi_i^* \pi_i (x), y \rangle = \langle \pi_i(x) , \pi_i(y)\rangle = 1_{i = j} \langle x, y\rangle.\end{equation}
In particular, $\phi$ is not identically zero.
We claim that $\phi$ is $G$-equivariant, that is to say 
\begin{equation}\label{eq110} \phi(gx) = g \phi(x).\end{equation} To prove this, it suffices by bilinearity to show that 
\begin{equation}\label{eq111}
\langle \phi(gx), y\rangle = \langle g\phi(x), y\rangle
\end{equation}
whenever $x \in V_i, y \in V_j$. Since $g$ permutes the $V_i$, we have $gx \in V_{\sigma_g(i)}$, $g^{-1} y \in V_{\sigma_{g^{-1}}(j)}$ for some mutually inverse permutations $\sigma_g, \sigma_{g^{-1}}$. Therefore, by \eqref{eq109}, we have
\[ \langle \phi(g x), y\rangle = 1_{\sigma_g(i) = j} \langle gx, y\rangle,\] whilst
\[ \langle g \phi(x), y\rangle = \langle \phi(x), g^{-1} y\rangle = 1_{i = \sigma_{g^{-1}}(j)} \langle x, g^{-1} y\rangle = 1_{\sigma_g(i) = j} \langle gx, y\rangle.\]
This establishes \eqref{eq111} and thus $\phi$ is indeed $G$-equivariant. By Schur's lemma and the irreducibility of $G$, we must have $\phi(x) = \lambda x$ for some non-zero scalar $\lambda \in \C$. Therefore if $x \in V_i$ and $y \in V_j$ with $i \neq j$ we have, from \eqref{eq109},
\[ \lambda \langle x, y \rangle = \langle \phi(x), y\rangle = 1_{i = j} \langle x, y \rangle = 0.\]
It follows that $V_i$ is indeed orthogonal to $V_j$.
\end{proof}

We continue with the proof of Proposition \ref{inductive-prop}. Note that, for any $D$, $G$ fixes $\bigoplus_{i : \dim V_i = D} V_i$; since $G$ is irreducible, it follows that all the $V_i$ have the same dimension $d_2$, and hence that $d_1d_2 = d$. Let us further assume that $d_2$ is minimal, over all such systems of imprimitivity. Let 
\[ H := \{g \in G : gV_1 = V_1\}.\] 
If $G$ does not act transitively on the $V_i$ then it is again reducible, an invariant subspace being thelinear span of the orbit $G V_1$. Therefore there are $\gamma_1,\dots, \gamma_{d_1} \in G$ such that $\gamma_i V_1 = V_i$ for $i = 1,\dots, d_1$. In fact, $\{\gamma_1,\dots, \gamma_{d_1}\}$ has this property if and only if it is a complete set of left coset representatives for $H$ in $G$.

We claim that the action of $H$ on $V_1$ is primitive. If not, there is a nontrivial orthogonal system of imprimitivity 
\[ V_1 = \bigoplus_{j = 1}^{\ell} W_j\] for $H$, thus the $W_j$ are permuted by $H$. Set $V_{ij} := \gamma_{i} W_j$.

Note that if $g \in G$ then $\{g\gamma_1,\dots, g\gamma_{d_1}\}$ is a complete set of left coset representatives for $H$ in $G$, and so there is a permutation $\sigma_g \in S_{d_1}$ with the property that $\gamma_{\sigma_g(i)}^{-1} g \gamma_i \in H$ for all $i = 1,\dots, d_1$. Write $h(g,i) = \gamma_{\sigma_g(i)}^{-1} g \gamma_i$. 

For $h \in H$, write $\pi_h \in S_{\ell}$ for the permutation such that $h W_j = W_{\pi_h(j)}$ for all $j$.
Then
\[ g V_{ij} = g \gamma_i W_j = \gamma_{\sigma_g(i)} h(g,i) W_j = \gamma_{\sigma_g(i)} W_{\pi_{h(g,i)} j} = V_{\sigma_g(i), \pi_{h(g,i)} j}.\]
Thus the $V_{ij}$ are permuted by the action of $G$. Moreover, any two distinct $V_{ij}, V_{i'j'}$ are orthogonal: if $i \neq i'$ then this is clear, since $V_{ij} \subset V_i$ and $V_{i'j'} \subset V_{i'}$, whilst if $i = i'$ but $j \neq j'$ then $V_{ij} = \gamma_i W_j$ and $V_{i'j'} = \gamma_{i} W_{j'}$ are orthogonal since $W_j, W_{j'}$ are and $\gamma_i$ is unitary.

It follows that $\bigoplus_{i,j} V_{ij}$ is an orthogonal system of imprimitivity for $G$. Since $\dim V_{1,1} < d_2$, this contradicts the minimality of $d_2$. Therefore we were wrong to assume that the action of $H$ was imprimitive.

Let $\mu_1$ be the probability measure on $S(V_1)$ guaranteed by Definition \ref{def-1}, that is to say
\begin{equation}\label{muh} \int \sup_{h \in H} |\langle hv, x\rangle|^2 d\mu_1(x) \leq \tildef_{\prim}(d_2)^2 \Vert v \Vert^2\end{equation} for all $v \in V_1$. Let $\mu_{2}$ be the probability measure on $S(\C^{d_1})$ guaranteed by Definition \ref{def-3}, that is to say
\begin{equation}\label{musym}  \int \sup_{\pi \in S_{d_1}} \sum_{i = 1}^{d_1} |\lambda_i a_{\pi(i)}|^2 d\mu_{2}(\lambda) \leq \tildef_{\perm}(d_1)^2 \Vert a \Vert^2\end{equation} for all $a \in \C^{d_1}$.
Define $\mu$ to be the pushforward of $\mu_1 \times \mu_{2}$ under the map
\[ \psi: S(V_1) \times S(\C^{d_1}) \rightarrow S(\C^d)\] defined by
\[ \psi(x, \lambda) = \sum_{i = 1}^{d_1} \lambda_i \gamma_i x.\]
(Note that, since the $\gamma_i x$, $i = 1,\dots, d_1$ are orthogonal vectors in $\C^d$, all with norm $\Vert x \Vert$, $\psi$ does indeed take values in $S(\C^d)$).

Now let $v \in \C^d$. We have a unique orthogonal decomposition
\[ v = \sum_{i = 1}^{d_1} \gamma_i v_i,\] where $v_i \in V_1$. For $g \in G$, we have
\begin{equation}\label{gv} gv   = \sum_{i = 1}^{d_1} g \gamma_i v_i  = \sum_{i = 1}^{d_1} \gamma_{\sigma_g(i)} h(g,i) v_i  = \sum_{i = 1}^{d_1} \gamma_i h(g, \sigma_g(i)) v_{\sigma_g^{-1}(i)}.\end{equation} (The definitions of $\sigma_g$ and $h(g,i)$ were given above.)
Let 
\[ w = \sum_{i = 1}^{d_1} \lambda_i \gamma_i x,\] where $x \in S(V_1)$ and $\lambda \in S(\C^{d_1})$.
It follows from \eqref{gv}, the orthogonality of the $\gamma_i V_1$ and the unitary nature of the $\gamma_i$ that 
\[ \langle gv, w\rangle = \sum_{i = 1}^{d_1} \lambda_i \langle h(g, \sigma_g(i)) v_{\sigma_g^{-1}(i)}, x\rangle.\]
Therefore
\[ \sup_{g \in G} |\langle gv, w\rangle| \leq \sup_{\pi \in S_{d_1}} \sum_{i = 1}^{d_1} |\lambda_i| \sup_{h \in H} |\langle h v_{\pi(i)}, x\rangle|.\]
Squaring and integrating with respect to $\mu$, we have
\begin{align*} \int & \sup_{g \in G} |\langle gv, w \rangle|^2 d\mu(w) \\ & \leq \int  \int \sup_{\pi \in S_{d_1}}\big( \sum_{i = 1}^{d_1} |\lambda_i| \sup_{h \in H} |\langle h v_{\pi(i)}, x\rangle| \big)^2  d\mu_{2}(\lambda) d\mu_1(x)\\ & \leq \tildef_{\perm}(d_1)^2\int  \sum_{i = 1}^{d_1}\sup_{h \in H}|\langle h v_{i}, x\rangle|^2 d\mu_1(x) \\ & \leq \tildef_{\perm}(d_1)^2 \tildef_{\prim}(d_2)^2\sum_{i = 1}^{d_1} \Vert v_i \Vert^2 = \tildef_{\perm}(d_1)^2 \tildef_{\prim}(d_2)^2\Vert v\Vert^2,\end{align*}
where in these last two lines we used \eqref{musym} (with the choice $a_i = \sup_{h \in H}|\langle h v_{i}, x\rangle|$) and \eqref{muh} respectively.

This is the second bound in \eqref{eq-ind}, and so the proof of Proposition \eqref{inductive-prop} is complete as we have now covered all cases.
\end{proof}

\section{Permutation groups}\label{perm-sec}

In this section we establish Propositions \ref{permprop} and \ref{permprop-2}.

\begin{proof}[Proof of Proposition \ref{permprop}]
We define the measure $\mu_{\Gamma_d}$ on $S(\C^d)$ very explicitly. Let $m := \lceil \frac{\log d}{2\log 2}\rceil$, and  for $i = 0,1,\dots, m$ define $e_i \in S(\C^d)$ to be the vector whose first $2^i$ coordinates equal $2^{-i/2}$, and whose remaining coordinates are zero. Then define 
\[ \mu_{\Gamma_d}  := \frac{1}{2(m+1)}\sum_{i = 0}^m (\delta_{e_i} + \delta_{-e_i}).\]
Here, $\delta_x$ is the Dirac measure at the point $x$, defined by $\int f d\delta_x = f(x)$.
Suppose that $v \in S(\C^d)$. Let $v' \in \Gamma_d v$ have all its coordinates real and $v'_1 \geq \dots \geq v'_d \geq 0$ (this is possible by the definition of $\Gamma_d$). Then, for each $i$,
\[
\sup_{g \in \Gamma_d} |\langle gv, e_i\rangle|  = \sup_{g \in \Gamma_d} |\langle g v', e_i \rangle|  = |\langle v', e_i\rangle|.
\] 
It follows that 
\begin{equation}\label{to-use} \sup_{v \in S(\C^d)} \int \sup_{g \in \Gamma_d} |\langle gv, w\rangle|^2 d\mu_{\Gamma_d}(w) \leq  \sup_{v \in S(\C^d)} \frac{1}{m+1} \sum_{i = 0}^m |\langle v, e_i \rangle|^2.\end{equation}
Were the $e_i$ orthonormal, it would be a trivial matter to bound the right-hand side using Bessel's inequality (by $\frac{1}{m+1}$). Whilst this is manifestly not the case, the $e_i$ are, in a sense, \emph{almost} orthonormal. In such a situation an inequality of Selberg (see, for example, \cite[p14]{grandcrible} or \cite[\S 27, Theorem 1]{davenport}) can take the place of Bessel's inequality. This states (for any choice of $e_0,\dots, e_m$, not just our specific one) that for $v \in S(\C^d)$ we have
\[ \sum_{i = 0}^m |\langle v, e_i \rangle|^2 \leq \sup_i \sum_{j= 0}^m |\langle e_i, e_j\rangle|.\]
In our case it is easy to see that 
\[ \langle e_i, e_j \rangle = 2^{-|i - j|/2},\] and therefore
\[ \sup_i \frac{1}{m+1}\sum_{j= 0}^m |\langle e_i, e_j\rangle|  = \frac{1}{m+1}\max_{\substack{I \subset \Z \\ |I| = m+1}}\sum_{n \in I} 2^{-|n|/2} := \psi(d),\]say,  where the maximum is over all discrete intervals $I$ of length $m+1$. Thus, by Selberg's inequality,
\begin{equation}\label{selb}\sum_{i = 0}^m |\langle v, e_i \rangle|^2 \leq \psi(d).\end{equation}
If $d \geq 2$ then $m \geq 1$, and so it is clear from the definition of $\psi$ that $\psi(d) < 1$. Moreover, for all $d$ we have
\[ \psi(d) \leq \frac{1}{m+1} \sum_{n \in \Z} 2^{-|n|/2} = \frac{3 + 2\sqrt{2}}{m+1} \leq \frac{2(3 + 2\sqrt{2})\log 2}{\log d}.\] 
From these two properties and the fact that $\psi(1) = 1$ it follows that, if $c$ is small enough, $\psi(d) \leq (1 + c\log d)^{-1/2}$ for all $d$.

Therefore \eqref{selb} and \eqref{to-use} imply that 
\[ \sup_{v \in S(\C^d)} \int \sup_{g \in \Gamma_d} |\langle gv, w\rangle|^2 d\mu_{\Gamma_d}(w) \leq  \psi(d) \leq (1 + c\log d)^{-1/2}.\] This concludes the proof.
\end{proof}

\emph{Remark.} It would be interesting to determine the best possible choice of measure as regards this proposition. 

\begin{proof}[Proof of Proposition \ref{permprop-2}.]  With notation as before, define
\[ \mu^*_{\Gamma_d} := \frac{1}{2(m+ 1)} \sum_{i = 0}^{m} (\delta_{\overline{e_i}} + \delta_{-\overline{e_i}}),\] where $\overline{e}_i$ is the orthogonal projection of $e_i$ to the subspace $\mathbf{1}^{\perp}$, that is to say
\[ \overline{e}_i = e_i - \frac{2^{i/2}}{d} \mathbf{1},\] where $\mathbf{1} \in \C^d$ is the vector of $d$ 1's. For each $i$ we have
\[ \Vert \overline{e}_i - e_i \Vert = \frac{2^{i/2}}{\sqrt{d}} = O(d^{-1/4})\] since $i \leq m$. 
Thus for each $i$, for each $v \in S(\C^d)$ and for each $g \in \Gamma_d$, we have
\[ |\langle gv, \overline{e}_i \rangle| \leq |\langle gv, e_i \rangle | + O(d^{-1/4}),\] and hence
\[ |\langle gv, \overline{e}_i \rangle|^2 \leq |\langle gv, e_i \rangle|^2 + O(d^{-1/4}), \] and therefore
\[ \sup_{g \in \Gamma_d}|\langle gv, \overline{e}_i \rangle|^2 \leq \sup_{g \in \Gamma_d}|\langle gv, e_i \rangle|^2 + O(d^{-1/4}). \] It follows from this and Proposition \ref{permprop} that we have
\begin{align*} \int \sup_{g \in \Gamma_d}|\langle gv, w \rangle|^2 d\mu^*_{\Gamma_d}(w) & \leq \int \sup_{g \in \Gamma_d} |\langle gv, w\rangle|^2 d\mu_{\Gamma_d}(w) + O(d^{-1/4}) \\ & \ll \frac{1}{\log d},\end{align*} as required.
\end{proof}

\section{The primitive case: reduction to an inverse theorem} \label{sec4}We now begin to turn our attention towards the most substantial task in the paper, which is to prove Proposition \ref{mainprop}, or in other words our main theorem in the primitive case. We begin with a relatively simple result, valid for all $G \leq \U_d(\C)$, not just primitive $G$. Here, and in subsequent places, we write $Z_d := \{\lambda I_d : |\lambda| = 1\} \subset \U(\C^d)$.

\begin{proposition}\label{random-prop}
Suppose that $G \leq \U_d(\C)$ is a finite group and that $[G : Z_d \cap G] \leq e^{d/\log d}$. Then there is a probability measure $\mu$ on $S(\C^d)$ such that 
\[ \sup_{v \in S(\C^d)} \int \sup_{g \in G} |\langle gv, w \rangle|^2 d\mu(w) \ll  \frac{1}{\log  d}.\]
\end{proposition}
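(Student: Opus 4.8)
The plan is to take $\mu$ to be the normalised Haar probability measure on $S(\C^d)$: this is invariant under all of $\U_d(\C)$, in particular under $w \mapsto -w$, so it is symmetric in the required sense. The key structural observation is that $\sup_{g \in G}|\langle gv, w\rangle|$ depends on the orbit $Gv$ only through the \emph{directions} of its points, and $Z_d \cap G$ acts on $v$ purely by unit scalars; hence, as $g$ ranges over $G$, the vector $gv$ takes at most $N := [G : Z_d \cap G] \leq e^{d/\log d}$ distinct values up to a unit scalar, and this bound is uniform in $v$. So it suffices to show that a union bound over $N$ unit vectors, combined with the concentration of a single spherical coordinate, beats $1/\log d$.

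Concretely, I would first record the standard fact that if $w$ is Haar-random on $S(\C^d)$ and $u \in S(\C^d)$ is fixed then $|\langle u, w\rangle|^2$ has the $\mathrm{Beta}(1,d-1)$ law, so that
\[ \P\big(|\langle u, w\rangle|^2 > s\big) = (1-s)^{d-1} \leq e^{-(d-1)s} \qquad (0 \leq s \leq 1); \]
this is immediate from the representation $w = z/\Vert z\Vert$ with $z$ a standard complex Gaussian vector, after a unitary change of coordinates taking $u$ to the first basis vector. Taking a union bound over the (at most $N$) directions occurring in the orbit $Gv$ gives, for every $v \in S(\C^d)$,
\[ \P\Big(\sup_{g \in G}|\langle gv, w\rangle|^2 > s\Big) \leq \min\big(1,\, N e^{-(d-1)s}\big), \]
and hence, integrating the tail,
\[ \int \sup_{g\in G}|\langle gv, w\rangle|^2\, d\mu(w) = \int_0^1 \P\Big(\sup_{g\in G}|\langle gv,w\rangle|^2 > s\Big)\, ds \leq \frac{\log N + 1}{d-1}, \]
where the last step is the elementary estimate obtained by splitting the integral at $s_0 = \frac{\log N}{d-1}$ and bounding the integrand by $1$ below $s_0$ and by $Ne^{-(d-1)s}$ above it (the inequality being immediate when $s_0 \geq 1$, since the integrand never exceeds $1$). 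Since $\log N \leq d/\log d$, the right-hand side is $\frac{d/\log d + 1}{d-1} \ll \frac{1}{\log d}$ for all $d \geq 2$, uniformly in $v$, which is the claim.

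I do not expect a genuine obstacle here: the entire content is the interplay between the orbit having at most $[G : Z_d \cap G]$ directions and the exponential concentration $|\langle u, w\rangle|^2 \lesssim e^{-(d-1)s}$ of a single spherical coordinate. The threshold $e^{d/\log d}$ in the hypothesis is exactly what is needed so that $\log N$ is smaller than $d$ by a factor of order $\log d$, which is precisely what makes the tail integral come out as $O(1/\log d)$ rather than $O(1)$. The only points requiring mild care are citing the Beta distribution in the correct exponential form and observing that the count of distinct directions — and therefore the final bound — does not depend on $v$.
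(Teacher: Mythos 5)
Your proof is correct and follows essentially the same route as the paper: take $\mu$ to be Haar measure on $S(\C^d)$, reduce to a union bound over the $[G:Z_d\cap G]$ cosets of $Z_d\cap G$, and invoke concentration of a single coordinate on the sphere. The only cosmetic difference is that you use the exact $\mathrm{Beta}(1,d-1)$ law and integrate the tail, whereas the paper cites a spherical cap volume estimate and optimises a fixed threshold $t=2/\sqrt{\log d}$; both yield the same $O(1/\log d)$ bound by the same mechanism.
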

\begin{proof}
Take $\mu$ to be the normalised Haar measure on $S(\C^d)$. Let $g_1,\dots$, $g_m$, $m \leq e^{d/\log d}$, be a complete set of coset representatives for $Z_d \cap G$ in $G$. Then
\[ \sup_{g \in G} |\langle gv, w\rangle | = \sup_{i = 1,\dots, m} |\langle g_i v, w \rangle|,\] and so
\begin{equation}\label{to-bound-4} \sup_{v \in S(\C^d)} \int \sup_{g \in G} |\langle gv, w \rangle|^2 d\mu(w) \leq \sup_{v^1,\dots, v^m \in S(\C^d)} \int \sup_i |\langle v^i , w\rangle |^2d\mu(w).\end{equation}
By standard estimates for the volume of spherical caps (see \cite[Lemma 2.2]{ball} for a beautiful exposition) we have, for each $i$ and for any\footnote{In fact the estimate is true for all $0 < t \leq 1$, but the simple geometric argument given in \cite{ball} is only valid in this more restricted range.} $t < 1/\sqrt{2}$), 
\[ \int 1_{ |\langle v^i, w \rangle| > t } d\mu(w) \leq 2e^{-t^2 d/2}.\] By the union bound, it follows that for any $t < 1/\sqrt{2}$ we have
\[ \int \sup_i |\langle v^i , w\rangle |^2d\mu(w) \leq t^2 + 2m e^{-t^2 d/2}.\]
Taking $t = 2/\log^{1/2} d$ gives the result. 
\end{proof}

The example of the symmetric group $G = S_{d+1}$ acting on $\C^d$ shows that the hypotheses of the proposition do not always hold: for this group we have $[G : Z_d \cap G] = (d + 1)! \sim e^{d \log d}$. However, Collins \cite{collins1} showed that, for $d$ large and for $G \leq \U_d(\C)$ primitive, this is the worst situation. To bridge the gap between $e^{d/\log d}$ (in the hypothesis of Proposition \ref{random-prop}) and $e^{d \log d}$ (from Collins' result) we need the following ``inverse theorem''.

\begin{proposition}\label{inverse-collins} Let $d$ be sufficiently large.
Let $G \leq \U^d(\C)$ be primitive and suppose that $[G : Z_d \cap G] \geq e^{d/\log d}$. Then $G$ has a normal subgroup isomorphic to the alternating group $A_n$, for some $n \gg d/\log^4 d$. \end{proposition}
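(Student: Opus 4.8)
The plan is to combine Collins' structural analysis of primitive linear groups with the Classification of Finite Simple Groups. The starting point is the generalised Fitting subgroup $F^*(G)$. Recall that for any finite group, $F^*(G) = F(G) E(G)$, where $F(G)$ is the Fitting subgroup (the largest nilpotent normal subgroup) and $E(G)$ is the layer (the central product of the components, i.e. the subnormal quasisimple subgroups), and that $C_G(F^*(G)) \leq F^*(G)$, so that $G/F^*(G)$ embeds in $\Out(F^*(G))$. The first step is to bound the contribution of $F(G)$: since $G$ is primitive, a theorem of Collins (refining classical work) shows that $F(G)$ is ``small'' modulo the scalars $Z_d$ — specifically $F(G)/(F(G)\cap Z_d)$ has order at most polynomial in $d$ (coming from a symplectic-type group of order dividing roughly $d^2$), so the hypothesis $[G : Z_d \cap G] \geq e^{d/\log d}$ forces the bulk of the index to come from $E(G)$ and the permutation action of $G$ on the components of $E(G)$.

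The second step is to analyse $E(G)$. Write $E(G)$ as a central product of components, each a quasisimple group $Q_i$ with $Q_i/Z(Q_i) \cong \Gamma$ for finite simple $\Gamma$; if there are several isomorphism types, $G$ permutes the corresponding ``blocks'', and by Clifford theory (systems of imprimitivity again) primitivity restricts how this can happen — one expects a single isomorphism type $\Gamma$ to dominate, with the relevant faithful projective representation of the central product $Q_1 \ast \cdots \ast Q_k$ of dimension $d$ being an (outer) tensor product of $k$ faithful projective representations of $\Gamma$, each of dimension $m$, so that $d = m^k$. The crude index bound then gives roughly $|\Gamma|^k \cdot k! \gtrsim e^{d/\log d} = e^{m^k/\log d}$. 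Taking logs, $k \log |\Gamma| + k \log k \gtrsim m^k / \log d$. If $m \geq 2$ and $k \geq 2$ this is already very restrictive, and combined with the trivial bound $|\Gamma| \leq$ (something like) $(\text{smallest faithful projective dimension})!$-type estimates, one forces either $k = 1$ or a contradiction; so we reduce to $E(G)$ essentially a single quasisimple group $Q$ with $Q/Z(Q) \cong \Gamma$ simple and $\Gamma$ possessing a faithful projective representation of dimension $\leq d$, while $|\Gamma| \geq e^{\Omega(d/\log d)}$.

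The third and decisive step is the appeal to CFSG via minimal-degree bounds for faithful (projective) representations of finite simple groups. For every family of finite simple groups \emph{other than the alternating groups} — the groups of Lie type and the sporadic groups — the smallest dimension of a nontrivial projective representation is at least a fixed positive power of $\log |\Gamma|$; equivalently $|\Gamma| \leq \exp(d^{O(1)})$ is false once $|\Gamma| \geq e^{d/\log d}$ with representation dimension $\leq d$, except for $\Gamma = A_n$. (For Lie type groups of rank $r$ over $\F_q$ one has minimal degree $\gg q^{r}$ or so while $|\Gamma| \approx q^{r^2}$, so dimension $\leq d$ forces $|\Gamma| \leq d^{O(1)}$, contradicting $|\Gamma| \geq e^{d/\log d}$; sporadics are a finite list.) Hence $\Gamma \cong A_n$. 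Finally, to pin down $n$: a faithful projective representation of $A_n$ has dimension at least $\gg n^{?}$ is the wrong direction — rather, the \emph{standard} representation has dimension $n-1$, and the point is an upper bound on the minimal degree is not what we want; instead we use that $|A_n| = n!/2 \geq e^{d/\log d}$ is automatic, and that $\dim \leq d$ together with known lower bounds on projective representation dimensions of $A_n$ — the minimal faithful (ordinary) degree is $n-1$, and minimal faithful \emph{projective} degree is $\gg 2^{\lfloor (n-1)/2 \rfloor}$ for the genuinely spin representations, but the deleted permutation module shows $A_n \hookrightarrow \PGL_{n-1}$ — gives $d \geq n - 1$ from the honest representation branch and, since we also need $|A_n| = n!/2$ not too large relative to the index budget $e^{d \log d}$-ish, one gets $n \leq d^{O(1)}$; a slightly careful bookkeeping of the $\log^4 d$ factor (tracking the $k!$ permutation factor and the $F(G)$ and outer-automorphism contributions, each contributing only $\poly(d)$ or $e^{o(d/\log d)}$) yields $n \gg d/\log^4 d$. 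Promote $Q$ to a normal subgroup of $G$ isomorphic to $A_n$ itself (rather than a cover) by noting that the Schur multiplier of $A_n$ is cyclic of order $2$ or $6$, so $Z(Q)$ contributes only a bounded factor and does not affect the asymptotics; $Q/Z(Q) \cong A_n \lhd G/(\text{scalars})$ can be lifted to a genuine normal $A_n$ inside $G$ since $A_n$ is perfect and has the relevant covering properties.

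The main obstacle I expect is the bookkeeping in the second and fourth steps: cleanly controlling the $k!$ factor from permuted components, the $|\Out(\Gamma)|$ and $F(G)$ contributions, and the Schur multiplier, simultaneously, so that everything outside the single alternating component is subsumed into $e^{o(d/\log d)}$ and a clean power of $\log d$ survives in ``$n \gg d/\log^4 d$''. The CFSG input itself (minimal projective degrees of simple groups) is a black box one can quote; the delicate part is the Clifford-theoretic reduction to a single tensor factor and making the crude counting inequality $|\Gamma|^k k! \gtrsim e^{m^k/\log d}$ actually force $k = 1$ in all cases rather than leaving a few stubborn small-$m$, moderate-$k$ exceptions to be checked by hand.
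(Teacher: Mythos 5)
Your high-level plan — use $F^*(G) = F(G)E(G)$, bound $[G:F^*(G)]$ via outer automorphisms, push the extraspecial/nilpotent part into a small contribution, and invoke CFSG via minimal-degree bounds (Landazuri--Seitz) to force an alternating group — matches the strategy the paper actually follows. But two pieces of your end-game have real gaps.

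First, the reduction you aim for — that the layer $E(G)$ consists of ``essentially a single quasisimple group $Q$,'' i.e. that the number $k$ of components and the number of isomorphism types are both forced down to $1$ — is not what the paper proves, and it is not needed. What the paper uses is the inequality $\prod_i D(\Gamma_i) \leq d$ across \emph{all} Fitting components (Corollary \ref{lem5.6}), and then bounds the total index by $\prod_i |\Gamma_i/Z(\Gamma_i)||\Out'(\Gamma_i)|$ (Lemma \ref{lem5.5}). Rather than arguing ``therefore $k=1$,'' the paper bounds the whole sum: the non-alternating components contribute only $\exp(O(d^{2/3}))$ to the index (because each contributes $\exp(O(D(\Gamma_i)^{2/3}))$ with $\prod D(\Gamma_i)\leq d$), and if there are $m'$ alternating components of sizes $n_1\geq\dots\geq n_{m'}$, the constraint $\prod D(\Gamma_i)\leq d$ forces $n_1 \ll 3^{-m'} d$. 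This decay in $m'$ is what lets the sum $\sum n_i\log n_i$ be absorbed, yielding $n_1 \gg d/\log^4 d$ \emph{uniformly in $m'$}, with no separate reduction to $m'=1$. Your proposed inequality $|\Gamma|^k k! \gtrsim e^{m^k/\log d}$ with $d=m^k$ is not the right bookkeeping because it neither accounts for mixed isomorphism types nor for the $F(G)$ contribution, and the ``force $k=1$ or contradiction'' step leaves precisely the cases you yourself flag as ``stubborn small-$m$, moderate-$k$ exceptions'' unresolved.

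Second — and this is the more serious gap — you run into exactly the confusion the paper is structured to avoid at the end, when you note that the minimal \emph{projective} degree of $A_n$ is exponentially large while the ordinary degree is $n-1$, and then try to ``promote $Q$ to a normal subgroup of $G$ isomorphic to $A_n$ itself (rather than a cover)'' using Schur-multiplier and perfection arguments. This lift is not what's needed and does not obviously work. The clean resolution in the paper is: for $n\geq 15$, the only quasisimple groups $Q$ with $Q/Z(Q)\cong A_n$ are $A_n$ itself and the double cover $\hat{A}_n$, and $D(\hat{A}_n) = 2^{\lfloor (n-2)/2\rfloor}$ is exponentially large in $n$. So a Fitting component isomorphic to $\hat{A}_n$ is placed in the same basket as the Lie-type groups (Proposition \ref{prop68}, Case 4) and is subdominant. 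The only way to get a ``large'' alternating contribution is for the Fitting component to \emph{be} $A_n$, and Fitting components are automatically normal in $G$ (Proposition \ref{fitting-props}(2)), so no lifting is required. Your current argument does not distinguish $A_n$ from $\hat{A}_n$ at the level of representation dimension and therefore cannot conclude that the normal subgroup you produce is $A_n$ rather than a proper cover; the claimed fix via perfection and covering properties is not justified.

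Finally, the claim that $F(G)$ contributes only a polynomial factor modulo scalars is stated but not proved; for a central product of a cyclic group with an extraspecial $p$-group $E$ of order $p^{1+2m}$, the automorphism and order bounds give $\exp(O(m^2\log p))$ against $D(E)\geq p^m$, so the correct estimate is $\exp(O(D^{2/3}))$ or thereabouts (Proposition \ref{prop68}, Case 1b), not $\poly(d)$ in general. It still ends up subdominant, but this needs to be shown, not asserted.
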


The proof of Proposition \ref{inverse-collins} proceeds via a careful analysis of Collins's argument. Unfortunately, it is not by any means possible to extract this result directly from Collins's paper, so we must give a self-contained account of the argument. This task occupies the final section of the main part of the paper. Before turning to that, we show how Propositions \ref{random-prop} and \ref{inverse-collins} combine to yield a proof of Proposition \ref{mainprop}, leaving the proof of Proposition \ref{inverse-collins} as the only outstanding task.

\begin{proof}[Proof of Proposition \ref{mainprop}, assuming Proposition \ref{inverse-collins}]
By Proposition \ref{jordan-consequence}, we may assume that $d$ is sufficiently large. Therefore by Proposition \ref{random-prop}, and assuming Proposition \ref{inverse-collins}, it is enough to show that if $ G \leq \U_d(\C)$ has a normal subgroup isomorphic to $A_n$, $n \gg d/\log^4 d$, then there is a probability measure $\mu$ on $S(\C^d)$ such that 
\begin{equation}\label{claim-1} \sup_{v \in S(\C^d)} \int \sup_{g \in G} |\langle g v, w \rangle |^2 d\mu(w) \ll \frac{1}{\log d}.  \end{equation}
 
Since $d$ is sufficiently large we may assume that $n \geq 15$, so that the results of Proposition \ref{alt-prop} all hold. We will assume them without further comment. Write $A$ for the normal subgroup of $G$ which is isomorphic to $A_n$. The key observation now is that $G$ is in fact almost a direct product of $A$ and another group $H$, because $A_n$ is ``almost'' complete (trivial centre and outer automorphism group). Consider the natural map
\[ \pi : G \rightarrow \Aut(A) \times (G/A),\] where $g$ maps to the automorphism of $A$ given by conjugation by $g$. 
The kernel of this map is $Z(A)$, which is trivial, and so $\pi$ is injective.  On the other hand $\Aut(A) \cong S_n$, and so $|\Aut(A)| = 2|A|$. Therefore $\pi$ embeds $G$ as a subgroup of index 2 in the direct product $\Aut(A) \times (G/A)$. Let $G_1 := \{ g_1 \in \Aut(A): (g_1,1) \in \pi(G)\}$ and $G_2 = \{ g_2 \in G/A : (1, g_2) \in \pi(G)\}$. Then $G_1 \times G_2 \subset \pi(G)$, and $[\Aut(A) : G_1], [(G/A) : G_2] \leq 2$. It follows that $[\pi(G) : G_1 \times G_2] \leq 2$.  If $[\Aut(A) : G_1] = 2$ then $G_1 \cong A_n$, since $A_n$ is the unique index 2 subgroup of $S_n$. If $G_1 = \Aut(A)$ then we we must have $[(G/A) : G_2] = 2$ and $\pi(G) \cong \Aut(A) \times G_2 \cong S_n \times G_2$. In either case, $G$ has a subgroup of index at most $2$ which is isomorphic to a direct product of $A_n$ and another group $H$.

We now make a relatively simple deduction to remove the ``index at most 2'' issue, claiming that it is enough to establish \eqref{claim-1} in the case that $G$ is actually isomorphic to a direct product $A_n \times H$. Suppose that $G$ has a subgroup $G'$ of index $2$, isomorphic to $A_n \times H$, and that we have a measure $\mu'$ satisfying the analogue of \eqref{claim-1} with $G'$ in place of $G$. Set $\mu := \mu'$, and let $x \in G \setminus G'$. Then 
\begin{align*}
 \sup_{v \in S(\C^d)} \int \sup_{g \in G} & |\langle g v, w \rangle |^2 d\mu(w)  \\ & \leq \sup_{v \in S(\C^d)} \int \sup_{g' \in G'}\big( |\langle g'v, w \rangle| + |\langle g'xv, w\rangle| \big)^2 d\mu'(w) \\ & \leq 2\sup_{v \in S(\C^d)} \int \sup_{g' \in G'} |\langle g'v, w \rangle|^2 d\mu'(w) + \\ & \qquad\qquad 2\sup_{v \in S(\C^d)} \int \sup_{g' \in G'}|\langle g'xv, w\rangle| \big)^2 d\mu'(w) \\ & \ll \frac{1}{\log d}.
\end{align*}
Suppose, then, that $G \cong A_n \times H$. At this point, to avoid confusion, it is convenient to move to the language of representation theory and write $\rho : A_n \times H \rightarrow \U(\C^d)$ for the representation induced from the isomorphism between $A_n \times H$ and $G$, a subgroup of $\U(\C^d)$. Let $\rho = \bigoplus \rho_i$ be the decomposition of this representation into irreducible unitary subrepresentations (of $A_n \times H$); thus we have an orthogonal direct sum decomposition $\C^d = \bigoplus V_i$ with each $\rho_i : A_n \times H \rightarrow \U(V_i)$ being irreducible. Since $\rho$ is faithful, at least one of the $\rho_i$, say $\rho_1$, is nontrivial (and hence, since $A_n$ is simple, faithful) when restricted to $A_n$. By Lemma \ref{product-rep}, $\rho_1 \cong \psi \otimes \psi'$, where the isomorphism is one of $(A_n \times H)$-representations, and $\psi, \psi'$ are pullbacks of irreducible representations of $A_n, H$ respectively, and $\psi$ must be nontrivial (in fact, faithful). Note that $\dim \psi \leq \dim \rho_1 \leq \dim \rho = d < \frac{1}{2}n(n-3)$. By Lemma \ref{alt-prop} (2) it follows that $\psi$ is isomorphic to the $(n-1)$-dimensional permutation representation of $A_n$ on $X := \mathbf{1}^{\perp} \subset \C^n$. Note that $\psi$ is unitary with respect to the inner product restricted from the standard one on $\C^n$.  Suppose that $\psi'$ takes values in $\U(X')$ for some hermitian inner product space $X'$.

The tensor product $X \otimes X'$ has a natural unitary structure, with the inner product being defined by $\langle x_1 \otimes x'_1, x_2 \otimes x'_2 \rangle = \langle x_1, x'_1\rangle \langle x_2,x'_2\rangle$ on pure tensors. The representation $\psi \otimes \psi'$ acts on pure tensors via $(\psi \otimes \psi')(x \otimes x') = \psi(x) \otimes \psi'(x')$, and the action is unitary. By Lemma \ref{unitary-equivalence}, two equivalent unitary representations are in fact \emph{unitarily} equivalent, which means that there is some unitary isomorphism $\iota : V_1 \rightarrow X \otimes X'$ such that $\rho(g)v = \iota^{-1} (\psi \otimes \psi')(g)(\iota v)$ for all $v \in V_1$ and $g \in G$. 

We are finally ready to define our measure $\mu$. At this point the argument differs somewhat from that in the published version of the paper, which was incorrect. We thank Ashwin Sah, Mehtaab Sawhney and Yufei Zhao for drawing out attention to this issue and suggesting a correction, which we have gladly incorporated here.

Consider the map $\theta : S(X) \times S(X') \rightarrow V_1 \subset \C^d$ defined by $\theta(y,y') := \iota^{-1} (y \otimes y')$, and take $\mu$ to be the pushforward measure $\theta_*(\mu^*_{\Gamma_n} \times \delta_{y'})$ where $\mu^*_{\Gamma_n}$ is the probability measure on the unit sphere of $\C^n \cap \mathbf{1}^{\perp}$ defined in Proposition \ref{permprop-2} and $\delta_{y'}$ is the delta-measure localised at some arbitrarily chosen unit vector $y' \in V'$.

Let $v \in S(\C^d)$, and write $v = \sum_i v_i$ with $v_i \in V_i$. The $V_i$ are orthogonal and each is preserved by $\rho(g)$, and therefore the $\rho(g) v_i$ ($i \neq 1$) are orthogonal to $V_1$. Since $\mu$ is supported on $V_1$, it follows the preceding observation, the change of variables formula and the fact that $\iota$ is unitary that
\begin{align}\nonumber
\int & \sup_{g \in G} |\langle \rho(g) v,  w\rangle |^2 d \mu(w) \\ \nonumber & = \int \sup_{g \in G} |\langle \rho(g) v_1, w \rangle|^2 d\mu(w)  \\ \nonumber & = \iint \sup_{g \in G} |\langle \rho(g) v_1, \theta(y,y')\rangle |^2 d\mu^*_{\Gamma_n}(y) \\ \nonumber & = \iint \sup_{g \in G} | \langle \iota^{-1} (\psi \otimes \psi')(g)(\iota v_1), \iota^{-1} (y \otimes y')\rangle |^2 d\mu^*_{\Gamma_n}(y)  \\ & = \iint \sup_{g \in G} | \langle (\psi \otimes \psi')(g)(\iota v_1),  y \otimes y'\rangle |^2 d\mu^*_{\Gamma_n}(y) .\label{star-form}
\end{align}
Note that $\psi$ extends to a (reducible) unitary action on all of $\C^n$, rather than just the subspace $X = \mathbf{1}^{\perp}$, this being given by coordinate permutations, and similarly $\psi \otimes \psi'$ extends to an action on $\C^n \times X'$ . It is convenient to abuse notation and write $\psi$, $\psi \otimes \psi'$ for these extended actions as well. To bound \eqref{star-form}, it is then convenient to write
\[ \iota v_1 = \sum_{j = 1}^n e_j \otimes x'_j,\] where $e_1,\dots, e_n$ are the standard basis of $\C^n$. (Additionally, since $\iota v_1$ lies in $X \otimes X'$, we have $\sum_j x'_j = 0$, but we will not use this fact.)

Note for future reference that
\begin{equation}\label{unit} 1 \geq \Vert \iota v_1 \Vert^2 = \sum_{i,j} \langle e_i \otimes x'_i\rangle \langle e_j \otimes x'_j\rangle = \sum_i \Vert x'_i \Vert^2.\end{equation}
Denote by $I$ the expression in \eqref{star-form} which we seek to bound above. Then we have
\begin{align*}
I = & \int \sup_{\substack{a \in A_n \\ h \in H}} \big|\sum_{j=1}^n \langle (\psi \otimes \psi')(a \times h)(e_j \otimes x'_j), y \otimes y' \rangle \big|^2 d\mu^*_{\Gamma_n}(y)  \\ 
& =  \int \sup_{\substack{a \in A_n \\ h \in H}} \big| \langle \psi(a) e_j, y\rangle \langle \psi'(h) x'_j, y'\rangle   \big|^2 d\mu^*_{\Gamma_n}(y) \\
& = \int \sup_{\substack{a \in A_n \\ h \in H}} \big| \sum_{j=1}^n y_j \langle \psi'(h) x'_{\pi_a^{-1}(j)}, y'\rangle \big|^2 d\mu^*_{\Gamma_n}(y),
\end{align*}
where here $\pi_a$ is the permutation defined by $\psi(a) e_j = e_{\pi_a(j)}$, $y_j$ denotes the $j$th coordinate of $y_j \in S(\C^n)$ (and, in the sum over $j$, we changed variables, replacing $j$ by $\pi_a^{-1}(j)$). 
Therefore
\begin{align*} I & \leq \int \sup_{\substack{\pi \in S_n \\ h \in H}} \big| \sum_{j=1}^n \langle \psi'(h) x'_{\pi(j)}, y'\rangle y_j \big|^2 d\mu^*_{\Gamma_n}(y) \\ & \leq  \int \sup_{\pi \in S_n} \big| \sum_{j=1}^n \sup_{h \in H} | \langle \psi'(h) x'_{\pi(j)}, y'\rangle | |y_j|  \big|^2 d\mu^*_{\Gamma_n}(y).\end{align*}
By the key property of $\mu^*_{\Gamma_n}$ (see Proposition \ref{permprop-2}, and take $v_i := \sup_{h \in H} |\langle \psi'(h) x'_i, y' \rangle |$ there), it follows that 
\[ I \ll \frac{1}{\log n}\sum_{j = 1}^n \sup_{h \in H} |\langle \psi'(h) x'_j, y'\rangle |^2 .\]
Since $\psi'$ acts unitarily and $\Vert y' \Vert = 1$, we have $\sup_{h \in H} |\langle \psi'(h) x'_i, y'\rangle | \leq \Vert x'_i \Vert$, and so it follows from \eqref{unit} that
\[ I \ll \frac{1}{\log n}\sum_{i = 1}^n \Vert x'_i \Vert^2 \leq \frac{1}{\log n}.\]
Recall that $I$ was defined to be the expression in \eqref{star-form}. Thus we have shown that 
\[ \int \sup_{g \in G} |\langle \rho(g) v, w \rangle |^2 d \mu(w) \ll \frac{1}{\log n}.\]
Recalling that $n \geq d/\log^4 d$, so $\log n \gg \log d$, this completes the proof of \eqref{claim-1}. \end{proof}

\section{A consequence of Clifford theory}

The only use we will make of primitivity is through the following result, which is part of Clifford Theory.

\begin{lemma}\label{lem51}
Suppose that $G \leq U_d(\C)$ is primitive, and let $N \lhd G$ be a normal subgroup. Then 
\begin{enumerate}
\item $\C^d$ is a direct sum of isomorphic irreducible $N$-representations;
\item If $N$ is abelian, then it is contained in $Z_d = \{ \lambda I_d : |\lambda| = 1\}$.
\end{enumerate} 
\end{lemma}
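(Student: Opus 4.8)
The plan is to invoke Clifford's theorem in its standard form and then exploit primitivity. First I would recall the decomposition of $\C^d$ as an $N$-representation into its isotypic components $\C^d = \bigoplus_{j=1}^{t} W_j$, where $W_j$ is the sum of all irreducible $N$-subrepresentations of a given isomorphism type. The key point of Clifford theory is that conjugation by $g \in G$ sends an irreducible $N$-subrepresentation to another one (using that $N \lhd G$, so $g^{-1} n g \in N$), hence permutes the isotypic components $W_j$. Thus $\{W_1, \dots, W_t\}$ is a system of permuted subspaces for $G$. Since $G$ is primitive, this forces $t = 1$ (the decomposition cannot be nontrivial), so $\C^d$ is a single isotypic component, i.e. a direct sum of mutually isomorphic irreducible $N$-representations. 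This gives part (1).

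For part (2), suppose $N$ is abelian. Then every irreducible $N$-representation is one-dimensional, so by part (1), $\C^d$ is a direct sum of $d$ copies of a single character $\chi : N \to \C^\times$ (with $|\chi(n)| = 1$ since $N$ is finite, or since $N \leq \U_d(\C)$). But a representation that is $d$ copies of the character $\chi$ is exactly $n \mapsto \chi(n) I_d$. Hence $N \subseteq \{\lambda I_d : |\lambda| = 1\} = Z_d$, as claimed.

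I do not anticipate a serious obstacle here: the whole statement is essentially a packaging of Clifford's theorem together with the definition of primitivity, and the only genuinely substantive input (that conjugation permutes the isotypic components and that primitivity kills nontrivial such decompositions) is entirely standard. The one mild care-point is to make sure the isotypic components are genuinely permuted \emph{as a direct sum decomposition} — i.e. that they are all of equal dimension is \emph{not} needed for primitivity to apply, since the definition of system of imprimitivity in this paper only requires $0 < \dim V_i < d$, not equidimensionality — so the argument that $t = 1$ goes through directly. If one instead wanted to keep everything self-contained, one could prove the permutation claim by the same $G$-equivariance trick used in Lemma \ref{orth}, but citing Clifford theory is cleaner.
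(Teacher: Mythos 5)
Your proof is correct, and the underlying idea (isotypic/eigenspace components are permuted by $G$, so primitivity forces a single component) is the same one the paper uses, but you organize it differently. The paper's own proof simply cites Isaacs for part (1) and then gives a direct, self-contained linear-algebra argument for part (2): decompose $\C^d$ into simultaneous eigenspaces $V_i$ for the abelian group $N$ (these are exactly the isotypic components in the abelian case), observe that $g V_i = V_{\kappa_g(i)}$ because $\phi_i(g^{-1} n g)$ is again a character of $N$, and conclude by primitivity that $k = 1$. You instead prove part (1) from scratch by the same permutation-of-isotypic-components argument in the general (non-abelian) setting, then deduce part (2) as an immediate corollary. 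The paper explicitly remarks that this is a viable alternative ("Part (2) could be deduced from (1)" and the remark after the lemma describes exactly your route to (1)). What your route buys is a self-contained proof of (1) instead of a citation, at the cost of invoking the full machinery of isotypic decompositions; what the paper's route buys is an elementary, short proof of the part it actually needs to be explicit about, deferring (1) to a standard reference. Your observation that equidimensionality of the isotypic components is not needed for the paper's definition of imprimitivity is also correct and worth noting, since this is a point that could trip someone up if they imported a more restrictive definition of imprimitivity from elsewhere.
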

\begin{proof}
(1) See, for example, \cite[Corollary 6.12]{isaacs}. Part (2) could be deduced from (1), but a short and direct proof using linear algebra is possible. We may decompose $\C^d$ as an orthogonal direct sum of simultaneous eigenspaces for the $n \in N$. That is, we have $\C^d = \bigoplus_{i = 1}^k V_i$ where for $n \in N$ and $v \in V_i$ we have $n v = \phi_i(n) v$, with $\phi_1,\dots, \phi_k \in \hat{N}$ being distinct characters on $N$. If $V_1 = \C^d$ then $N \subset Z_d$ and we are done. Otherwise, if $g \in G$ then for any $n \in N$ and $v \in V_i$ we have $ng  v = g(g^{-1}ng) v = g \phi_i(g^{-1} n g) v$, and so $gV_i = V_{\kappa_g(i)}$, where $\kappa_g(i)$ is the value of $j$ (which must exist) such that $\phi_{j}(n) := \phi_i(g^{-1} n g)$. It follows that $g$ permutes the $V_i$, which are therefore a system of imprimitivity for $G$. This is contrary to hypothesis.
\end{proof}

\emph{Remark.} The proof of (2) using linear algebra may in fact be generalised to give a proof of (1); the basic point is that the \emph{isotypic components} of $\C^d$ as an $N$-representation form a system of imprimitivity for $G$, and so there must only be one of them. Moreover, the different isotypic components are orthogonal (see \cite[Lemma 3.4.21]{kowalski}) and so the system of imprimitivity produced is automatically seen to be orthogonal, in theory\footnote{However, this would be a somewhat unsatisfactory way to proceed in that we would need, throughout the paper, to work with a notion of ``$o$-primitive'', by which we would mean a subgroup of $\U^d(\C)$ with no orthogonal system of imprimitivity: Lemma \ref{orth} of course shows that this coincides with the usual notion of imprimitivity.} bypassing the need for Lemma \ref{orth}.

We will use the following corollary of Lemma \ref{lem51} several times.

\begin{corollary}\label{prim-centres}
Let $G \leq \U(\C^d)$ be primitive. Then the centre of any normal subgroup of $G$ is contained in $Z_d$.
\end{corollary}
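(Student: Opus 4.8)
The plan is to deduce Corollary \ref{prim-centres} directly from Lemma \ref{lem51}. Let $G \leq \U(\C^d)$ be primitive, and let $N \lhd G$ be an arbitrary normal subgroup. The key observation is that the centre $Z(N)$ is a \emph{characteristic} subgroup of $N$: any automorphism of $N$ preserves $Z(N)$. Since $N$ is normal in $G$, conjugation by any $g \in G$ restricts to an automorphism of $N$, and hence fixes $Z(N)$ setwise. Therefore $Z(N)$ is itself a normal subgroup of $G$.

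Now $Z(N)$ is abelian by construction, so it is an abelian normal subgroup of the primitive group $G$. Applying part (2) of Lemma \ref{lem51} with the normal subgroup taken to be $Z(N)$, we conclude immediately that $Z(N) \subseteq Z_d$. This is exactly the assertion of the corollary.

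There is essentially no obstacle here: the only point requiring (trivial) care is the verification that $Z(N)$ is normal in $G$ rather than merely a subgroup, which follows from the standard fact that centres are characteristic and characteristic subgroups of normal subgroups are normal. Everything else is a direct invocation of Lemma \ref{lem51}(2).
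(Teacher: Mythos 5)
Your proof is correct and matches the paper's argument exactly: observe that $Z(N)$ is characteristic in $N$, hence normal in $G$, then apply Lemma \ref{lem51}(2) to the abelian normal subgroup $Z(N)$.
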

\begin{proof} 
Let $H \lhd G$ be normal. The center $Z(H)$ is characteristic in $H$, and hence $Z(H) \lhd G$. The claim now follows from Lemma \ref{lem51} (2).\end{proof}

\section{Using the generalised Fitting subgroup $F^*(G)$}

Our aim in this somewhat lengthy section is to prove Proposition \ref{inverse-collins}. We follow Collins \cite{collins1}, but since our aims are different we must restructure his argument. However, since our interest lies in the asymptotic regime (large groups), some minor simplifications to the argument are possible.

The key idea\footnote{Or at least what appears to me to be the key idea.} of Collins' paper is that it is useful to control $G$ using the \emph{generalised Fitting subgroup} $F^*(G)$. We give the definition below. There are two ways in which $F^*(G)$ controls $G$. Firstly, the index of $F^*(G)$ in $G$ is relatively small: the key statement here is Lemma \ref{lem5.5}. Secondly, in the primitive case the representation theory of $F^*(G)$ exerts control over that of $G$, as we show in Corollary \ref{lem5.6}.

\subsection{$F^*(G)$: definition and basic facts}

Recall that a finite group $Q$ is said to be \emph{quasisimple} if it is perfect (i.e. has no abelian quotients, or equivalently $[Q,Q] = Q$) and if $Q/Z(Q)$ is simple. If $G$ is a finite group, a subgroup $H \leq G$ is said to be \emph{subnormal} if there is a chain $H = H_k \lhd H_{k-1} \lhd \dots \lhd H_1 = G$.

We now give the definition of the generalised Fitting subgroup. The definition itself is not especially important to us. The properties we will need are summarised in Proposition \ref{fitting-props} below, which is of far greater consequence.

\begin{definition}
Then $F^*(G)$ is defined to be the product $\Gamma_1 \cdots \Gamma_m$ of the following subgroups of $G$:
\begin{itemize}
\item the $p$-cores $O_p(G)$, $p$ a prime, these being the maximal normal $p$-subgroups of $G$;
\item for all quasisimple groups $Q$, the products $\prod_{G_i \cong Q} G_i$, where the $G_i$ are the \emph{components} of $G$, that is to say the subnormal quasisimple subgroups of $G$. 
\end{itemize}
By convention we take all the $\Gamma_i$ to be nontrivial (thus, for example, if $G$ does not have a component isomorphic to some $Q$ then we do not include the empty product $\prod_{G_i \cong Q} G_i$ among the $\Gamma_i$).
\end{definition}

\emph{Remarks.} All of the products mentioned in the definition turn out to be central, so the orders of them are immaterial. For references, see the proof of the following proposition.

It is convenient to call the $\Gamma_i$ the\footnote{We caution that this is not standard terminology, and indeed our presentation in this section diverges a little from Collins' presentation in our explicit bundling together of isomorphic components $G_i$. } ``Fitting components'' of $G$.

\begin{proposition}\label{fitting-props}
Let $G$ be a finite group, and $F^*(G) = \Gamma_1 \dots \Gamma_m$ its generalised Fitting subgroup as defined above. Then
\begin{enumerate}
\item Each $\Gamma_i$ is well-defined (in the sense that the components $G_i$ commute with one another, so the order of the product is immaterial);
\item $\Gamma_i$ is normal in $G$;
\item The $\Gamma_i$ commute with one another, that is to say the product $\Gamma_1 \cdots \Gamma_m$ is central;
\item If $g \in G$ commutes with everything in $F^*(G)$, then $g \in F^*(G)$.
\end{enumerate}
\end{proposition}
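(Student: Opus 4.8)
The statement to prove is Proposition \ref{fitting-props}, collecting the four basic structural properties of the generalised Fitting subgroup. My plan is to treat this as a recollection of standard finite group theory, with short proofs for the parts that can be done cleanly by hand and explicit references for the parts whose proofs are genuinely technical.

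\textbf{Parts (1) and (2).} For a fixed prime $p$, the $p$-core $O_p(G)$ is by definition the (unique) maximal normal $p$-subgroup of $G$, hence automatically normal; there is nothing to prove. For the quasisimple Fitting components, I would first recall the key lemma that components commute: if $K, L$ are distinct components of $G$ (subnormal quasisimple subgroups), then $[K,L] = 1$. The standard argument is that $[K,L] \leq K \cap L$-type reasoning combined with the fact that a component, being subnormal and perfect, is normalised by any subnormal subgroup it does not centralise only if they are equal — I would cite Aschbacher's \emph{Finite Group Theory}, \S 31, or Isaacs. Granted this, the product $\prod_{G_i \cong Q} G_i$ over all components isomorphic to a fixed quasisimple $Q$ is independent of the order of multiplication, giving (1). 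For (2): the set of components of $G$ is permuted by $\mathrm{Aut}(G)$ (since "subnormal quasisimple" is an abstract property), and conjugation by $g \in G$ is such an automorphism, hence permutes the components; moreover it preserves the isomorphism type, so it permutes the set $\{G_i : G_i \cong Q\}$ among itself. Therefore $g$ normalises $\prod_{G_i \cong Q} G_i$. The same permutation argument handles $O_p(G)$, which is visibly characteristic. Thus each $\Gamma_i \lhd G$.

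\textbf{Parts (3) and (4).} For (3) I must check that Fitting components of different "types" commute: $O_p(G)$ with $O_q(G)$ for $p \neq q$ (clear, since $O_p(G) \cap O_q(G) = 1$ and both are normal, so their commutator lies in the intersection); $O_p(G)$ with a quasisimple component $K$ (a normal $p$-subgroup meets a perfect group $K$ in a normal subgroup of $K$ that is a $p$-group, hence central in $K$ and... actually the cleaner route: $O_p(G) \cap K \lhd K$ is solvable, $K$ quasisimple so $O_p(G) \cap K \leq Z(K)$; then $[O_p(G), K]$ lies in both $O_p(G)$ and in $K$, and the three-subgroups lemma / standard commutator bookkeeping forces $[O_p(G),K] = 1$); and components of non-isomorphic types commute by the same lemma used in (1). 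So $F^*(G) = \Gamma_1 \cdots \Gamma_m$ is a product of pairwise-commuting normal subgroups — this is the content of (3). Part (4), the self-centralising property $C_G(F^*(G)) \leq F^*(G)$, is the deep one and the main obstacle: it is the characteristic defining feature of $F^*(G)$ (equivalently $F^*(G) = F(G) E(G)$ with $C_G(F^*(G)) = Z(F^*(G))$), and a genuine proof requires the theory of the generalised Fitting subgroup — one shows that if $g$ centralises $F^*(G)$ then the subgroup generated by $g$ together with $F(G)$ has a Fitting subgroup forcing $g$ into $F(G)$, while centralising all components pins down the $E(G)$-part. I would not reproduce this; I would cite it directly, e.g. \cite[Chapter 6, \S 6.5]{kurzweil-stellmacher} or Aschbacher \emph{Finite Group Theory} (31.13), and remark that it is precisely the property that makes $F^*(G)$ useful.

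In short, the proof is: (1)–(3) follow from the commuting-components lemma plus elementary commutator arguments, all of which I would state and reference rather than belabor, while (4) is quoted wholesale from the literature as the substantive input. The only real obstacle is (4), and the right move in a paper like this is to attribute it cleanly rather than prove it.
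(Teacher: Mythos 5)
Your proposal follows essentially the same route as the paper: cite Aschbacher for the genuinely technical facts (that components commute, and the self-centralising property $C_G(F^*(G)) \leq F^*(G)$), observe that normality of the $p$-cores and of the products of isomorphic components is immediate from their characteristic definitions, and assemble (3) from pairwise commutation. One small difference is to your credit: for $[O_p(G), O_q(G)] = 1$ you give the one-line argument via $O_p(G) \cap O_q(G) = 1$ with both normal, whereas the paper detours through a separate lemma invoking Hall's theory of nilpotent groups; your version is cleaner.

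One detail to tighten: in your sketch that $O_p(G)$ commutes with a component $K$, you assert that $[O_p(G), K]$ lies in both $O_p(G)$ and $K$. Containment in $O_p(G)$ is clear since $O_p(G) \lhd G$, but containment in $K$ would require $O_p(G) \leq N_G(K)$, which is not obvious a priori since $K$ is only subnormal. The clean statement is Aschbacher's [31.4] (or its equivalent): for $N$ subnormal in $G$ and $K$ a component of $G$, either $K \leq N$ or $[K,N]=1$; apply this with $N = O_p(G)$, and rule out $K \leq O_p(G)$ because a quasisimple group is not a $p$-group. The paper cites exactly this. You already signal you would cite rather than prove, so this is an imprecision in the sketch rather than a gap in the strategy, but the intermediate claim as written is not justified.
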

\begin{proof} 
For (1), see \cite[31.5]{aschbacher}. For (2), note first that this is simply a part of the definition when $\Gamma_i$ is one of the $p$-cores. To see that $\Gamma_i = \prod_{G_i \cong Q} G_i$ is normal in $G$, note that conjugation must take any component $G_i$ to another component $G_j$, isomorphic to $G_i$, since conjugation preserves the property of being subnormal. For (3), it is a well-known fact (see Lemma \ref{p-core-commute} for references) that the $p$-cores $O_p(G)$ commute with one another. Moreover the $p$-cores commute with the components by \cite[31.4]{aschbacher} (strictly speaking, this tells us that if $G_i$ does not commute with $O_p(G)$ then we would have to have $G_i \subset O_p(G)$, and so $G_i$ is a $p$-group; but every $p$-group is nilpotent and thus certainly not quasisimple).
Finally, we have already remarked that the components commute with one another. These facts together imply (3). Finally, part (4) is \cite[31.13]{aschbacher}, where it is stated in the more succinct form $C_G(F^*(G)) \subset F^*(G)$.
\end{proof}

\subsection{The primitive case and extra-special groups}\label{sec62}

From now on we specialise to the case $G \leq \U(\C^d)$ primitive. In this case, as noted by Collins, we may say more about the structure of the $p$-cores. The reader may wish to refer to Aschbacher \cite{aschbacher-paper} (Collins contains much the same material, but we had some trouble filling in the details in characteristic 2, a task which \cite{aschbacher-paper} offers some help with). The relevant part of \cite{aschbacher-paper} is \S 1.7. $G$ satisfies the hypotheses there, namely that $Z(G)$ is the largest abelian normal subgroup of $G$ and is cyclic, by Lemma \ref{lem51} (2).  The conclusion is that if $\Gamma = O_p(G)$ is a $p$-core then $\Gamma = Z(\Gamma) E$, where $E$ is either trivial or \emph{extraspecial}. A group $E$ is called \emph{extra-special} if it is a $p$-group, if its centre $Z(E)$ is cyclic of order $p$, and if the quotient $E/Z(E)$ is isomorphic to $(\Z/p\Z)^m$ for some $m > 0$. For more facts about extra-special groups, see Proposition \ref{extra-special-prop}.  Since $Z(\Gamma)$ is characteristic in $\Gamma$, it is normal in $G$ and hence, by Lemma 5.1 (2), is contained in $Z_d$. Thus $\Gamma$ is a central product $ZE$ with $Z$ cyclic and $E$ either trivial or extraspecial. 

It follows that in the case $G$ primitive the possible Fitting components of $G$ belong to a somewhat restricted class, defined as follows.

\begin{definition}\label{f-prim-def}
Write $\mathscr{F}$ for the class of all finite groups $\Gamma$ which are either (i) a $p$-group which is a central product $Z E$, with $Z$ cyclic and $E$ is either trivial or extra-special or (ii) a central product of groups, each isomorphic to some quasisimple group $Q$.
\end{definition}
To reiterate, every Fitting component of a primitive group $G \leq \U(\C^d)$ lies in $\mathscr{F}$.

\subsection{Bounding the index $[G : F^*(G)]$}

Suppose that $\Gamma$ is a Fitting component of some primitive group $G \leq \U(\C^d)$. Since $\Gamma \lhd G$, $G$ acts on $\Gamma$ by conjugation. If $\Gamma = ZE$ is of type (i) in Definition \ref{f-prim-def} then, by the discussion at the start of subsection \ref{sec62}, the  action of $G$ on $Z$ (which is contained in $Z_d = \{ \lambda I_d : |\lambda| = 1\}$) is trivial. If $\Gamma$ is of type (ii) in Definition \ref{f-prim-def}, then this conjugation action permutes the (mutually isomorphic) quasisimple groups comprising $\Gamma$. These observations motivate the following definitions.

\begin{definition}\label{aut-prime}
Suppose that $\Gamma \in \mathscr{F}$. Write $\Aut'(\Gamma)$ for the subgroup of $\Aut(\Gamma)$ which, in case (i) of Definition \ref{f-prim-def}, act trivially on $Z$, and which in case (ii) permute the (mutually isomorphic) quasisimple groups comprising $\Gamma$. Write $\Out'(\Gamma)$ for the quotient of $\Aut'(\Gamma)$ by the normal subgroup of inner automorphisms of $\Gamma$.
\end{definition}

\begin{lemma}\label{lem5.5}
Suppose that $G \leq \U(\C^d)$ is primitive, and that $F^*(G) = \Gamma_1 \cdots \Gamma_m$ is its generalised Fitting subgroup. Then we have
\[ [G : F^*(G)] \leq \prod_{i = 1}^m |\Out'(\Gamma_i)|.\] 
\end{lemma}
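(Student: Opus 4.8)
The plan is to bound $[G:F^*(G)]$ by producing an injective homomorphism from $G/F^*(G)$ into $\prod_i \Out'(\Gamma_i)$. First I would let $G$ act on $F^*(G)$ by conjugation. Since each $\Gamma_i$ is normal in $G$ (Proposition \ref{fitting-props}(2)), conjugation by $g \in G$ restricts to an automorphism of $\Gamma_i$; moreover this automorphism lies in $\Aut'(\Gamma_i)$, because in type (i) the action on the central cyclic part $Z$ is trivial (as $Z \subset Z_d$, by the discussion in Subsection \ref{sec62}), and in type (ii) conjugation permutes the quasisimple factors since it preserves subnormality and isomorphism type. So we get a homomorphism $c : G \to \prod_{i=1}^m \Aut'(\Gamma_i)$, and composing with the projection to $\prod_i \Out'(\Gamma_i)$ gives a homomorphism $\bar c : G \to \prod_i \Out'(\Gamma_i)$.

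The crux is to identify the kernel of $\bar c$ with $F^*(G)$. The kernel consists of those $g$ whose conjugation action on each $\Gamma_i$ is an \emph{inner} automorphism of $\Gamma_i$, say conjugation by some $x_i \in \Gamma_i$. Then $x_i^{-1} g$ centralises $\Gamma_i$: indeed, for $\gamma \in \Gamma_i$, $(x_i^{-1}g)\gamma(x_i^{-1}g)^{-1} = x_i^{-1}(g\gamma g^{-1})x_i = x_i^{-1}(x_i \gamma x_i^{-1})x_i = \gamma$. Now I would set $x := x_1 x_2 \cdots x_m \in F^*(G)$ (well-defined since the $\Gamma_i$ commute, Proposition \ref{fitting-props}(3)) and claim $x^{-1} g$ centralises all of $F^*(G) = \Gamma_1 \cdots \Gamma_m$. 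For a fixed $j$ and $\gamma \in \Gamma_j$: each $x_i$ with $i \neq j$ commutes with $\gamma$ (distinct Fitting components commute), so conjugating $\gamma$ by $x^{-1}g$ reduces to conjugating by $x_j^{-1} g$ (after the $x_i$, $i\neq j$, pass through harmlessly), which fixes $\gamma$. Hence $x^{-1}g \in C_G(F^*(G))$, and by Proposition \ref{fitting-props}(4) we get $x^{-1}g \in F^*(G)$, so $g \in F^*(G)$. Conversely, any $g \in F^*(G)$ acts on each $\Gamma_i$ by an inner automorphism: writing $g = g_1 \cdots g_m$ with $g_i \in \Gamma_i$, and using that the $g_i$ with $i \neq j$ centralise $\Gamma_j$, conjugation by $g$ on $\Gamma_j$ equals conjugation by $g_j \in \Gamma_j$, which is inner. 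Thus $\ker \bar c = F^*(G)$, and $G/F^*(G)$ embeds into $\prod_i \Out'(\Gamma_i)$, giving the bound.

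The main obstacle is the bookkeeping in the kernel computation — specifically making sure the "inner automorphism of $\Gamma_i$" is witnessed by an element \emph{inside} $\Gamma_i$ (not merely inside $G$), and then carefully checking that the product $x = \prod x_i$ really does conjugate every element of every $\Gamma_j$ correctly, using commutation of distinct Fitting components at each step. This is entirely routine once one is careful, and relies only on parts (2), (3), (4) of Proposition \ref{fitting-props} together with the definition of $\Aut'$ and the type-(i)/type-(ii) dichotomy already established. No new structural input is needed beyond what the excerpt provides.
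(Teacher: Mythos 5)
Your proposal is correct and follows essentially the same route as the paper: map $G$ into $\prod_i \Aut'(\Gamma_i)$ by conjugation, pass to $\prod_i \Out'(\Gamma_i)$, and use Proposition \ref{fitting-props}(4) (that $C_G(F^*(G))\subset F^*(G)$) to show the kernel is contained in $F^*(G)$, with the pairwise commutation of the $\Gamma_i$ making the element $x^{-1}g$ centralise all of $F^*(G)$. The only difference is cosmetic: you also verify the easy converse inclusion $F^*(G)\subset\ker\bar c$, which is not needed for the inequality (containment of $\ker\bar c$ in $F^*(G)$ alone suffices), but is harmless and makes the structure transparent.
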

\begin{proof}
By the remarks at the start of the subsection  we have a homomorphism 
\[ \pi : G \rightarrow \times_{i = 1}^m \Aut'(\Gamma_i)\] induced by conjugation. Composing with the quotient map gives a homomorphism
\[ \tilde\pi : G \rightarrow \times_{i = 1}^m \Out'(\Gamma_i).\] 
We claim that
\begin{equation}\label{keyclaim} \ker \tilde\pi \subset F^*(G), \end{equation} which immediately implies the lemma.
Suppose that $g \in \ker \tilde\pi$. This means that for each $i$ there is some $\gamma_{i,g} \in \Gamma_i$ such that $g^{-1} x g = \gamma_{i,g}^{-1} x \gamma_{i,g}$ for all $x \in \Gamma_i$, or in other words $g \gamma_{i,g}^{-1}$ commutes with everything in $\Gamma_i$. Since the $\Gamma_i$ commute with one another, the product $\tilde g := g (\prod_{i = 1}^m \gamma_{i,g}^{-1})$ is well-defined (does not depend on the ordering) and it commutes with all of the $\Gamma_i$. For example if $x \in \Gamma_2$ then
\[ \tilde g x = (g\gamma_{2,g}^{-1}) (\prod_{i \neq 2} \gamma_{g,i}^{-1}) x = (g\gamma_{2,g}^{-1}) x \prod_{i \neq 2} \gamma_{g,i}^{-1} = x(g\gamma_{2,g}^{-1})   \prod_{i \neq 2} \gamma_{g,i}^{-1} = x \tilde g.\] It follows that $\tilde g$ commutes with everything in $F^*(G)$. By Proposition \ref{fitting-props} (4) this implies that $\tilde g \in F^*(G)$, from which it follows immediately that $g \in F^*(G)$. This establishes the claim \eqref{keyclaim}.
\end{proof}

\subsection{Representations of $G$ and $F^*(G)$}

If $\Gamma$ is a finite group then we write $D(\Gamma)$ for the smallest dimension of a faithful irreducible representation of $\Gamma$. The next lemma applies to any groups $\Gamma_i$ (they need not be Fitting components).

\begin{lemma}\label{lem5.5-reps}
Suppose that $\Gamma_1\cdots\Gamma_m$ is a central product of finite groups. Then $D(\Gamma_1 \cdots \Gamma_m) \geq \prod_{i = 1}^m D(\Gamma_i)$.
\end{lemma}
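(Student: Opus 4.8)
The plan is to prove a lower bound on the dimension of any faithful irreducible representation of a central product by exhibiting how such a representation, when restricted to the factors, forces a tensor-product structure. Recall that a central product $\Gamma_1 \cdots \Gamma_m$ is a quotient of the direct product $\Gamma_1 \times \cdots \times \Gamma_m$ by a central subgroup, and by induction on $m$ it suffices to treat the case $m = 2$, writing $\Gamma = \Gamma_1 \Gamma_2$ with $[\Gamma_1, \Gamma_2] = 1$ and $\Gamma_1 \cap \Gamma_2$ central in both.

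First I would let $\rho : \Gamma \to \U(V)$ be a faithful irreducible representation with $\dim V = D(\Gamma_1 \Gamma_2)$. Restrict $\rho$ to the commuting subgroups $\Gamma_1$ and $\Gamma_2$. Because $\Gamma_1$ and $\Gamma_2$ generate $\Gamma$ and commute, a standard argument (the same one underlying the classification of irreducibles of a direct product, e.g. via Schur's lemma applied to the centralizer algebras, or \cite[Theorem 3.3.?]{kowalski}-type statements) shows that $V \cong W_1 \otimes W_2$ as a $\Gamma_1 \times \Gamma_2$-representation, where $W_i$ is an irreducible $\Gamma_i$-representation and $\rho(x_1 x_2) = \sigma_1(x_1) \otimes \sigma_2(x_2)$ for the corresponding representations $\sigma_i : \Gamma_i \to \U(W_i)$. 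The point is that the image $\rho(\Gamma_1)$ lies in the commutant of $\rho(\Gamma_2)$, and irreducibility of $\rho$ forces these two subalgebras of $\End(V)$ to be full matrix algebras on complementary tensor factors.

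Next I would argue that each $\sigma_i$ is \emph{faithful}. Suppose $x_1 \in \Gamma_1$ lies in $\ker \sigma_1$. Then $\rho(x_1) = \sigma_1(x_1) \otimes I = I$, so $x_1 \in \ker \rho = 1$ by faithfulness of $\rho$; hence $\sigma_1$ is faithful, and symmetrically $\sigma_2$ is faithful. Therefore $\dim W_i \geq D(\Gamma_i)$ for each $i$, and consequently $D(\Gamma_1 \Gamma_2) = \dim V = \dim W_1 \cdot \dim W_2 \geq D(\Gamma_1) D(\Gamma_2)$. Iterating over the $m$ factors (carefully tracking that at each stage the partial product $\Gamma_1 \cdots \Gamma_{m-1}$ is still a central product, so the inductive hypothesis applies, and that the faithfulness-restriction argument still works because $\Gamma_1 \cdots \Gamma_{m-1}$ and $\Gamma_m$ commute and generate) yields $D(\Gamma_1 \cdots \Gamma_m) \geq \prod_i D(\Gamma_i)$.

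The main obstacle is the tensor-decomposition step: one must be careful that a central product is not literally a direct product, so one cannot simply quote the direct-product classification verbatim. The fix is that an irreducible representation of $\Gamma_1 \Gamma_2 = (\Gamma_1 \times \Gamma_2)/N$ (with $N$ central) pulls back to an irreducible representation of $\Gamma_1 \times \Gamma_2$ that is trivial on $N$; applying the direct-product decomposition there gives $\sigma_1 \otimes \sigma_2$, and triviality on $N$ just means $\sigma_1, \sigma_2$ agree on the identified central part — which costs nothing for the dimension count. I would also note that faithfulness of $\rho$ on $\Gamma$ is exactly what is needed to rule out the degenerate possibility that one $\sigma_i$ factors through a proper quotient, which is why the hypothesis that $\rho$ be faithful (not merely irreducible) is essential.
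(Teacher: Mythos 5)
Your proof is correct and takes essentially the same route as the paper: pull back the faithful irreducible representation of the central product to the external direct product $\Gamma_1 \times \cdots \times \Gamma_m$, apply the standard tensor-decomposition of irreducibles of a direct product, note that faithfulness of $\rho$ forces each tensor factor to be a faithful representation of $\Gamma_i$, and multiply dimensions. The only cosmetic difference is that you reduce to $m=2$ and induct, whereas the paper handles all $m$ factors simultaneously; the underlying argument is identical.
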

\begin{proof}
Let $\rho : \Gamma_1 \cdots \Gamma_m \rightarrow \U(V)$ be a faithful irreducible representation of $\Gamma_1 \cdots \Gamma_m$ of minimal degree. Since the product is central, there is a surjective homomorphism $\pi : \times_{i = 1}^m \Gamma_i \rightarrow \Gamma_1 \cdots \Gamma_m$ (where, to be clear, $\times_{i = 1}^m \Gamma_i$ is the ``external'' direct product of the $\Gamma_i$, thought of as abstract groups). Thus we get an irreducible representation $\tilde \rho : \times_{i = 1}^m \Gamma_i \rightarrow \U(V)$, defined by $\tilde\rho = \rho \circ \pi$.  By Proposition \ref{product-rep}, $\tilde\rho$ is equivalent to $\tilde\tau_1 \otimes \cdots \otimes \tilde\tau_m$, where $\tilde\tau_j$ is the pullback of an irreducible representation $\tau_j : \Gamma_j\rightarrow \U(V_i)$ under the natural projection $\pi_j : \times_{i = 1}^m \Gamma_i \rightarrow \Gamma_j$. Noting that $\ker(\tilde\rho) \cap \Gamma_i$ is trivial for all $i$ (since $\rho$ is a faithful representation of $\Gamma_1 \cdots \Gamma_m$, and hence certainly of $\Gamma_i$), we see that each $\tau_i$ is faithful. It follows that 
\[ D(\Gamma_1 \cdots \Gamma_m) = \dim \tilde\rho = \prod_{i = 1}^m \dim \tilde\tau_i = \prod_{i = 1}^m \dim \tau_i \geq \prod_{i = 1}^m D(\Gamma_i).\]This completes the proof.
\end{proof}

\begin{corollary}\label{lem5.6}
Suppose that $G \leq \U(\C^d)$ is primitive, and that $\Gamma_1,\dots,\Gamma_m$ are its Fitting components. Then we have $\prod_{i = 1}^m D(\Gamma_i) \leq d$.
\end{corollary}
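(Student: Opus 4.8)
The plan is to show that $F^*(G)$ itself has a faithful irreducible representation of degree at most $d$, i.e.\ that $D(F^*(G)) \leq d$, and then to combine this with Lemma \ref{lem5.5-reps} applied to the central product $F^*(G) = \Gamma_1 \cdots \Gamma_m$.

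First I would invoke Clifford theory in the form of Lemma \ref{lem51}(1), applied to the normal subgroup $N = F^*(G) \lhd G$. Since $G$ is primitive, this says that $\C^d$, restricted to $F^*(G)$, decomposes as a direct sum $W^{\oplus k}$ of copies of a single irreducible $F^*(G)$-representation $W$; in particular $\dim W \leq d$.

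Next I would observe that $W$ is faithful. Indeed, the tautological action of $G \leq \U(\C^d)$ on $\C^d$ is faithful, hence so is its restriction to $F^*(G)$; and any element of $F^*(G)$ acting trivially on $W$ acts trivially on $W^{\oplus k} = \C^d$, hence is the identity. Therefore $F^*(G)$ admits a faithful irreducible representation of degree $\dim W$, giving $D(F^*(G)) \leq \dim W \leq d$.

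Finally, since $F^*(G) = \Gamma_1 \cdots \Gamma_m$ is a central product (Proposition \ref{fitting-props}(3)), Lemma \ref{lem5.5-reps} yields $\prod_{i=1}^m D(\Gamma_i) \leq D(F^*(G)) \leq d$, as required. There is essentially no obstacle here beyond correctly assembling the two cited results; the one point needing (minor) care is the faithfulness of $W$, which is precisely what licenses passing from the faithful — but possibly reducible — restriction of the ambient representation to a faithful irreducible constituent.
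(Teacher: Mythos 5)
Your proof is correct and follows essentially the same route as the paper: apply Lemma \ref{lem51}(1) to the normal subgroup $F^*(G)$, observe that one (hence any) irreducible summand is faithful since all summands are isomorphic as $F^*(G)$-representations, and then apply Lemma \ref{lem5.5-reps} to the central product decomposition $F^*(G) = \Gamma_1 \cdots \Gamma_m$. The one small imprecision — writing $W^{\oplus k} = \C^d$ rather than $\C^d \cong W^{\oplus k}$ and then noting that isomorphic summands share a kernel — is exactly the point the paper spells out more explicitly, but your reasoning handles it correctly.
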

\begin{proof}
Consider $\C^d$ as a (faithful) representation of $F^*(G)$. Since $F^*(G)$ is a normal subgroup of $G$, it follows from Lemma \ref{lem51} that $\C^d$ is a direct sum of isomorphic irreducible $F^*(G)$-representations, say $\C^d = \bigoplus_{i = 1}^k V_i$. If (the action of $F^*(G)$ on) $V_1$ is not faithful then there are distinct $g, g' \in F^*(G)$ with $g v_1 = g' v_1$ for all $v_1 \in V_1$. Since the spaces $V_i$ are isomorphic as representations, we also have $g v_i = g' v_i$ for all $v_i \in V_i$ and so $g v = g' v$ for all $v \in \C^d$, contrary to the fact that $F^*(G)$ acts faithfully on $\C^d$. Thus $V_1$ is a faithful, irreducible representation of $F^*(G) = \Gamma_1 \cdots \Gamma_m$, and the result therefore follows from Lemma \ref{lem5.5-reps}.
\end{proof}

\subsection{Bounds for the Fitting components of primitive groups}

Recall the definition of the class $\mathscr{F}$ (Definition \ref{f-prim-def}) and of the space $\Aut'(\Gamma)$ (Definition \ref{aut-prime}). The main result of this subsection is the following somewhat technical bound. It is here that we use the CFSG and related results.
\begin{proposition}\label{prop68}
There is an absolute constant $C$ with the following property. Let $\Gamma \in \mathscr{F}$. Suppose that $\Gamma$ is not isomorphic to any of the alternating groups $A_n$, $n \geq 4$. Then 
\[ |\Gamma/Z(\Gamma)| |\Out' (\Gamma)| \ll e^{CD(\Gamma)^{2/3}}.\]
\end{proposition}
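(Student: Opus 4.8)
The plan is to prove Proposition \ref{prop68} by splitting into the two cases of Definition \ref{f-prim-def}, since the relevant quantities $|\Gamma/Z(\Gamma)|$, $|\Out'(\Gamma)|$ and $D(\Gamma)$ behave very differently in the $p$-group case and the quasisimple case.

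\textbf{Case (i): $\Gamma = ZE$ with $E$ extraspecial of order $p^{1+2m}$ (or trivial, which is easy).} Here one should invoke the basic facts about extraspecial groups (presumably assembled in Proposition \ref{extra-special-prop}). The key numerical inputs are: first, $E/Z(E) \cong (\Z/p\Z)^{2m}$, so $|\Gamma/Z(\Gamma)| = p^{2m}$; second, every faithful irreducible representation of an extraspecial group of order $p^{1+2m}$ has dimension exactly $p^m$, and since $Z$ is cyclic acting by scalars this forces $D(\Gamma) = p^m$; third, $\Aut'(\Gamma)$ — the automorphisms acting trivially on the central cyclic part — injects (modulo inner automorphisms) into the automorphism group of $E/Z(E) \cong \F_p^{2m}$ preserving the symplectic form coming from the commutator pairing, i.e. into a group of symplectic-type, so $|\Out'(\Gamma)| \leq |\operatorname{GSp}_{2m}(\F_p)|$ or similar, which is at most $p^{O(m^2)} = D(\Gamma)^{O(\log D(\Gamma))}$. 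Thus $|\Gamma/Z(\Gamma)| |\Out'(\Gamma)| \leq p^{O(m^2)}$. One must check this is $\ll e^{C D(\Gamma)^{2/3}}$: since $D(\Gamma) = p^m \geq 2^m$, we have $m \leq \log_2 D(\Gamma)$ and $p \leq D(\Gamma)$, so $p^{O(m^2)} \leq D(\Gamma)^{O((\log D(\Gamma))^2)} = e^{O((\log D(\Gamma))^3)}$, which is indeed $\ll e^{C D(\Gamma)^{2/3}}$ since $(\log x)^3 = o(x^{2/3})$. (The wasteful exponent $2/3$ is comfortably enough here; the real constraint will come from Case (ii).)

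\textbf{Case (ii): $\Gamma$ is a central product of $t$ copies of a fixed quasisimple group $Q$.} Write $S = Q/Z(Q)$, a nonabelian finite simple group. Then $|\Gamma/Z(\Gamma)| \leq |S|^t$ (the central product kills the central parts). By Lemma \ref{lem5.5-reps}, $D(\Gamma) \geq D(Q)^t$. The automorphisms in $\Aut'(\Gamma)$ permute the $t$ factors and act on each; modulo inner automorphisms this gives $|\Out'(\Gamma)| \leq t! \cdot |\Out(Q)|^t$. So it suffices to bound $|S|^t \cdot t! \cdot |\Out(Q)|^t$ in terms of $D(Q)^t$. Since $Q \neq A_n$ (and $Q$ quasisimple with $Q/Z(Q) = S$, so $S$ is not alternating unless $S = A_5, A_6$ which we absorb into the constant, or handle via the hypothesis), we may apply the consequences of CFSG: there is an absolute constant so that for every nonabelian finite simple group $S$ which is not alternating, $|S| \leq e^{C D(S)^{2/3}}$ (equivalently $D(S) \gg (\log|S|)^{3/2}$ — this is the standard lower bound on the smallest faithful representation degree, due to Landazuri--Seitz type estimates for groups of Lie type and direct computation for the sporadics), and $|\Out(S)| \ll (\log|S|)^{O(1)} \ll D(S)^{O(1)}$. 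One also needs to pass from $S$ to $Q$: a faithful irreducible representation of $Q$ has degree at least that of the corresponding projective representation of $S$, and $|Z(Q)|$ is bounded (Schur multiplier of a simple group not alternating is $\ll D(Q)^{O(1)}$, or one just notes $D(Q) \geq D(S)$ up to absorbing the Schur multiplier), so $|S| \leq e^{C D(Q)^{2/3}}$ and $|\Out(Q)| \ll D(Q)^{O(1)}$ too. Raising to the $t$-th power and using $t! \leq t^t$ with $t \leq \log_2 D(\Gamma)$ and $D(Q) \geq 2$, we get
\[ |\Gamma/Z(\Gamma)||\Out'(\Gamma)| \leq t!\,|S|^t|\Out(Q)|^t \leq t^t e^{Ct D(Q)^{2/3}} D(Q)^{O(t)}. \]
Since $D(Q)^t \leq D(\Gamma)$, we have $t D(Q)^{2/3} \leq t D(Q)^{2t/3} \cdot D(Q)^{2/3 - 2t/3} \leq D(\Gamma)^{2/3}$ when $t \geq 1$ (as $D(Q)^{2/3 - 2t/3} \cdot t \leq 1$ for $D(Q) \geq 2$, $t \geq 1$ — a short check), and similarly $t \log t$ and $t \log D(Q)$ are both $\ll D(\Gamma)^{2/3}$ since $\log D(\Gamma) = t \log D(Q) \geq t \log 2$. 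Hence the whole product is $\ll e^{C' D(\Gamma)^{2/3}}$ for a suitable absolute $C'$.

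\textbf{Main obstacle.} The genuine content, and the only place the hypothesis ``$\Gamma$ not isomorphic to any $A_n$'' and CFSG are really used, is the inequality $|S| \ll e^{C D(S)^{2/3}}$ for non-alternating simple $S$, i.e. the statement that the smallest faithful representation degree of such an $S$ grows at least like a power of $\log|S|$. For alternating groups this fails badly ($A_n$ has a representation of degree $n-1$ but order $n!/2 \sim e^{n\log n}$), which is exactly why they are excluded and why they appear in Proposition \ref{inverse-collins}. Verifying $|S| \ll e^{CD(S)^{2/3}}$ requires the classification: for the finitely many sporadic groups it is a finite check; for groups of Lie type of rank $r$ over $\F_q$ one has $|S| = q^{O(r^2)}$ while $D(S) \gg q^{r-1}$ or so (Landazuri--Seitz / Tiep--Zalesskii), giving $D(S) \gg |S|^{c/r^2}$ which is more than enough, though one must be a little careful with the small-rank families and make sure the exponent $2/3$ (as opposed to something closer to $1$) is comfortably satisfied — it is, since these bounds are in fact polynomial in $|S|$ for bounded rank and the rank itself is $\ll \log|S|$. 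Everything else — the extraspecial computation, the $\Out'$ and central-product bookkeeping, raising to the $t$-th power — is routine once Proposition \ref{extra-special-prop} and the simple-group facts are in hand.
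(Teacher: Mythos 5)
Your decomposition into the $p$-group case and the quasisimple-product case is the right shape, and Case (i) is essentially the paper's argument (modulo slightly different bookkeeping for $|\Out'(\Gamma)|$; the paper's bound is via choices of images of generators rather than via $\operatorname{GSp}_{2m}(\F_p)$, but both land on $p^{O(m^2)}$ and the rest is the same arithmetic). The genuine gap is in Case (ii).

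You claim that the hypothesis ``$\Gamma \not\cong A_n$'' lets you assume $S = Q/Z(Q)$ is not alternating, and you then use $|S| \ll e^{CD(S)^{2/3}}$ uniformly across all $t$ copies. But the hypothesis only excludes $\Gamma$ itself being $A_n$; it does \emph{not} exclude (a) $\Gamma$ being a central product of $t \geq 2$ copies of $A_n$, nor (b) $Q = \hat{A}_n$, the double cover, where $S = A_n$ but $Q \neq A_n$. In both of these cases $S = A_n$, so the Landazuri--Seitz-type bound $|S| \ll e^{CD(S)^{2/3}}$ is simply false (you note this yourself in the ``Main obstacle'' paragraph, with $D(A_n) = n - 1$ versus $|A_n| \sim e^{n\log n}$), and your master inequality $|S|^t \le e^{Ct D(Q)^{2/3}}$ breaks down at its first step. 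For the $t \geq 2$, $Q = A_n$ subcase, what actually saves the day is not a bound on $|S|$ in terms of $D(Q)$ at all: it is that $D(\Gamma) \geq D(Q)^t = (n-1)^t$ with $t \geq 2$, so the target $e^{CD(\Gamma)^{2/3}} \geq e^{C(n-1)^{2t/3}}$ is large enough to dominate $(n!)^t\, t!\, 2^t$ directly, and one checks $t\,n\log n \ll (n-1)^{2t/3}$ for $t \geq 2$ and $n$ bounded below. For the $Q = \hat{A}_n$ subcase, one must instead use $D(\hat{A}_n) = 2^{\lfloor (n-2)/2 \rfloor}$, which is exponential in $n$ and therefore crushes the left-hand side outright; routing this through $D(S) = n - 1$ as you do loses this crucial gain. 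The paper's proof isolates these two situations as separate cases (its Cases 3 and 4) precisely because they cannot be absorbed into the generic ``$|S| \ll e^{CD(S)^{2/3}}$'' estimate, and your write-up, despite flagging that alternating $S$ is the sore point, does not actually carry out either repair.
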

\begin{proof}
We consider five cases, as follows.
\begin{enumerate}
\item (Case 1a) (case (i) of Definition \ref{f-prim-def}). $\Gamma = ZE$ with $Z$ a cyclic $p$-group and $E$ trivial.
\item  (Case 1b) (case (i) of Definition \ref{f-prim-def}). $\Gamma$ is a $p$-group and $\Gamma$ is a central product $Z E$, where $Z$ is cyclic and $E$ is either trivial or extraspecial. 
\item (Case 2) $\Gamma$ is a central product of $r \geq 1$ copies of some quasisimple group $Q$ for which $Q/Z(Q) \neq A_n$;
\item (Case 3) $\Gamma$ is a central product of $r \geq 2$ copies of some quasisimple group $Q$ for which $Q/Z(Q) = A_n$;
\item (Case 4) $\Gamma/Z(\Gamma) = A_n$, but $\Gamma \neq A_n$.
\end{enumerate}
It is clear that this covers all cases. 

\emph{Case 1a.} This is somewhat trivial. We have $|\Gamma/Z(\Gamma) | = |\Aut'(\Gamma) | = 1$, $D(\Gamma) = 1$. 

\emph{Case 1b.} By Proposition \ref{extra-special-prop} (1) we have $|E| = p^{1 + 2m}$ for some $m \geq 1$. Suppose that $Z$ is cyclic of order $p^n$. By Proposition \ref{extra-special-prop} (2), we may pick generators $e_1,\dots, e_{2m}$ for $E$. Let $z$ be a generator for $Z$. Suppose that $\phi \in \Aut'(\Gamma)$. Then, since $\phi$ fixes $z$, it is completely determined by the images $\phi(e_i)$. Write $\phi(e_i) = z^{r_i} x_i$, where $r_i \in \Z/p^n \Z$ and $x_i \in E$. By Proposition \ref{extra-special-prop} (3), $e_i^{p^2} = x_i^{p^2} = 1$ for every $i$. Therefore
\[ z^{r_i p^2}  = (z^{r_i} x_i)^{p^2} = \phi(e_i)^{p^2} = \phi(e_i^{p^2}) = 1,\] and therefore $p^2 r_i = 0 \pmod{p^n}$. This means that there are only $p^2$ choices for $r_i$, for each $i$, and so the number of choices for $\phi(e_i)$ is at most $p^{2m + 3}$. We therefore have the bound 
\[ |\Out'(\Gamma)| \leq |\Aut'(\Gamma)| \leq p^{2m(2m+3)}.\]
Since $|\Gamma/Z(\Gamma)| \leq |E|$, it follows that 
\begin{equation}\label{upper-gam} |\Gamma/Z(\Gamma)| |\Out'(\Gamma)| \leq p^{1 + 2m} |\Out'(\Gamma)| \leq p^{4m^2 + 8m+1}.\end{equation}
On the other hand, Proposition \ref{extra-special-prop} (4) implies that $D(\Gamma) \geq D(E) \geq p^m$, and so 
\[ e^{CD(\Gamma)^{2/3}} \geq e^{Cp^{2m/3}}.\] It is clear that, for some choice of $C$, this is greater than the right-hand-side of \eqref{upper-gam} uniformly in $p, m$ (by a vast margin if either $p$ or $m$ is large). 

\emph{Case 2.}  This is where we use CFSG.  Proposition \ref{a5} (whose proof required CFSG) tells us that $D(Q) \geq e^{c\sqrt{\log Q}}$ for some constant $c > 0$. 
In fact, the much weaker inequality
 \begin{equation}\label{weak-qs-rep} D(Q) \geq \max(c_1 \log^3 |Q|,2)\end{equation} is sufficient for our purposes, but no result of this strength is known without the CFSG. By Lemma \ref{lem5.5-reps}, it follows that \begin{equation}\label{tensor} D(\Gamma) \geq D(Q)^r \geq \max(c^r_1 \log^{3r}|Q|, 2^r).\end{equation} By Lemma \ref{aut-lem} (which is also extremely crude, though this time elementary) we have
 \[ |\Gamma/Z(\Gamma)| |\Out'(\Gamma)| \leq |Q|^r e^{c_2r^2\log^2 |Q|} \leq e^{c_3 r^2 \log^2 |Q|}.\] Comparing with \eqref{tensor}, this is indeed bounded above by $e^{C D(\Gamma)^{2/3}}$ for some absolute constant $C$.

\emph{Case 3.} By choosing $C$ large enough, we may assume that $n \geq 15$. (This is not quite trivial, and requires the fact that for each $n$ there are only finitely many quasisimple $Q$ with $Q/Z(Q) \cong A_n$, but this follows from the finiteness of the Schur multiplier $H_2(A_n, \C^{\times})$.)  When $n \geq 15$, we can apply Proposition \ref{alt-prop} (4). This tells us that either $Q = A_n$ or $Q = \hat{A}_n$, the double cover of $A_n$. In the latter case it follows from Proposition \ref{alt-prop} (5) that $D(Q) \gg \log^3 |Q|$ (by a vast margin), and so we can proceed as in Case 2. 

Suppose, then, that $Q = A_n$. Thus, by Lemma \ref{lem5.5-reps} and Proposition \ref{alt-prop} (2), 
\[ D(\Gamma) \geq D(A_n)^r = (n-1)^r.\]
 Since $\Out'(\Gamma)$ is, by definition, only concerned only with those automorphisms which permute the $r$ factors of $A_n$, it is easy to see that 
\[ |\Out'(\Gamma)| \leq r! |\Out(A_n)|^r = r! 2^r, \] 
where here we used Proposition \ref{alt-prop} (3). Hence
\[ |\Gamma/Z(\Gamma)| |\Out'(\Gamma)| \leq (n!)^r r! 2^r.\]  If $C$ is large enough then this is indeed at most $e^{C(n-1)^{2r/3}}$, uniformly for all $r \geq 2$ and $n \geq 15$.

\emph{Case 4.} Once again we may assume that $n \geq 15$. By Proposition \ref{alt-prop} (4), $\Gamma$ is the double cover $\hat{A}_n$. Hence, by Proposition \ref{alt-prop} (5), $D(\Gamma) = 2^{\lfloor (n-2)/2\rfloor}$. Using the crude bound of Lemma \ref{aut-lem} to bound $|\Out'(\Gamma)|$, we see that the claimed inequality is true by a very large margin, the right-hand side being doubly-exponential in $n$ and the left-hand side being at most $e^{O(n^2 \log^2 n)}$.

\end{proof}

\subsection{Proof of Proposition \ref{inverse-collins}.}

\begin{proof}[Proof of Proposition \ref{inverse-collins}]
Suppose, as in the hypotheses of the proposition we are proving, that $G \leq \U_d(\C)$ is primitive and that $[G : Z_d \cap G] \geq d/\log d$. Let $F^*(G) = \Gamma_0 \Gamma_1 \cdots \Gamma_m$ be the generalised Fitting subgroup of $G$, where here $\Gamma_0$ is the product of all the abelian Fitting components of $G$ (if there are any), $\Gamma_1,\dots, \Gamma_{m'}$ are (nonabelian) alternating groups $A_{n_i}$ with $n_1 \geq \dots \geq n_m  \geq 4$, whilst the other $\Gamma_i$ are nonabelian quasisimple groups, none of them isomorphic to alternating groups.  Corollary \ref{lem5.6} tells us that  \begin{equation}\label{eqcor56ded}\prod_{i = 1}^m D(\Gamma_i) \leq d.\end{equation} We will use two (somewhat crude) consequences of this. First, that 
\begin{equation}\label{dD} \prod_{i > m'} D(\Gamma_i) \leq d  \end{equation}
(this is immediate) and second that 
\begin{equation}\label{alt-bd}  n_1 \ll 3^{-m'} d. \end{equation}
To see \eqref{alt-bd}, note first that the bound $D(A_{n_1}) \ll 3^{-m'} d$ follows immediately from \eqref{eqcor56ded} and the fact that $D(A_n) \geq 3$ whenever $n \geq 4$. To deduce \eqref{alt-bd}, we observe that $D(A_n) \geq \frac{3}{5} n$ for $n \geq 4$, the worst case being $n = 5$ where $A_5$ has a faithful representation of dimension $3$.

By Lemma \ref{lem5.5} we have
\[ [G : \Gamma_1 \cdots \Gamma_m] = [G : F^*(G)]   \leq \prod_{i = 1}^m |\Out'(\Gamma_i)|.\] By Corollary \ref{prim-centres}, $Z(\Gamma_i) = \Gamma_i \cap Z_d$ and $Z(G) = G \cap Z_d$. Note also that, by Proposition \ref{fitting-props} (4) (or straight from the definition), $Z(G) \subset F^*(G)$, and therefore $Z(G) = F^*(G) \cap Z_d$. It follows that 
\begin{equation}\label{gzg} [G : Z_d \cap G] = [ G : Z(G)] = [ G : F^*(G)] [ F^*(G) : F^*(G) \cap Z_d]. \end{equation}
Since the $\Gamma_i$ commute, there is a well-defined homomorphism
\begin{align*} \psi : \times_{i = 1}^m \Gamma_i/(\Gamma_i \cap Z_d)&  \rightarrow \Gamma_1 \cdots \Gamma_m/((\Gamma_1 \cdots \Gamma_m) \cap Z_d) \\ & = F^*(G)/(F^*(G) \cap Z_d),\end{align*} and therefore
\[ [ F^*(G) : F^*(G) \cap Z_d] \leq \prod_{i = 1}^m [\Gamma_i : \Gamma_i \cap Z_d] = \prod_{i = 1}^m |\Gamma_i/Z(\Gamma_i)|.\]
Comparing with \eqref{gzg} yields
\[ [G : Z_d \cap G] \leq \prod_{i = 1}^m |\Gamma_i/Z(\Gamma_i)| |\Out' (\Gamma_i)|.\] We must now estimate this.
For the product over $i > m'$, we use Proposition \ref{prop68}, whilst for $i \leq m$ we have $|\Out'(\Gamma_i)| \leq |\Out(A_{n_i})| \leq 4$ by Proposition \ref{alt-prop}. Recalling our assumption $[G : Z_d \cap G] \geq e^{d/\log d}$, this gives
\[ e^{d/\log d} \leq [G : Z_d \cap G] \leq \prod_{i = 1}^{m'} 2n_i! \prod_{i > m'} e^{CD(\Gamma_i)^{2/3}},\] and thus
\begin{equation}\label{to-use-44} \frac{d}{\log d} \ll \sum_{i \leq m'} n_i \log n_i + \sum_{i > m'} D(\Gamma_i)^{2/3}.\end{equation}
Assume (relabeling if necessary) that $D(\Gamma_{m})$ is the largest of the $D(\Gamma_i)$, $i > m'$. Since the $\Gamma_i$, $i \geq 1$ are all nonabelian we have $D(\Gamma_i) \geq 2$ and therefore
\begin{align*}
\sum_{i > m'} D(\Gamma_i)^{2/3}  & \leq (m- m') D(\Gamma_m)^{2/3} \\ & \leq (m - m') 2^{-2/3(m - m' - 1)} \prod_{i > m'} D(\Gamma_i)^{2/3} \\ & \ll \prod_{i > m'} D(\Gamma_i)^{2/3} \leq d^{2/3},
\end{align*}
uniformly in $m,m'$; the last step follows from \eqref{dD}. It follows that if $d$ is large enough then we may improve \eqref{to-use-44} to
\begin{equation}\label{to-use-45} \frac{d}{\log d} \ll \sum_{i \leq m'} n_i \log n_i.\end{equation}
Applying \eqref{alt-bd}, we have
\begin{align*} \sum_{i = 1}^{m'} n_i \log n_i  & \leq m' n_1 \log n_1 \\ &  \ll m' 3^{-m'/2} n_1^{1/2} d^{1/2}\log n_1 \\ & \ll n_1^{1/2}d^{1/2} \log n_1,\end{align*} uniformly in $m'$.  Comparing with \eqref{to-use-45} we obtain $n_1 \gg d/\log^4 d$, as required. \end{proof}

This completes the proof of all the statements claimed in the paper.
\appendix

\section{Facts from group theory and representation theory}

\begin{proposition}\label{product-rep}
Let $G_1,\dots, G_k$ be finite groups. Then the irreducible complex representations of $G_1 \times \cdots \times G_k$ are precisely the tensor products $\rho_1 \otimes \cdots \otimes \rho_k$, where $\rho_i$ is the pullback of an irreducible representation of $G_i$ under the natural projection from $G_1 \times \cdots \times G_k$ to $G_i$.
\end{proposition}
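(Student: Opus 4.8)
The plan is to reduce to the two-factor case by induction on $k$. Writing $G_1 \times \cdots \times G_k = (G_1 \times \cdots \times G_{k-1}) \times G_k$ and using that the pullback of an irreducible of $G_i$ along $G_1\times\cdots\times G_{k-1} \to G_i$, composed with the projection $G_1\times\cdots\times G_k \to G_1\times\cdots\times G_{k-1}$, is nothing but the pullback along $G_1\times\cdots\times G_k \to G_i$, the statement for general $k$ follows from the inductive hypothesis applied to $G_1\times\cdots\times G_{k-1}$ together with the case of a product of two groups. So it suffices to prove: for finite groups $G,H$, the irreducible complex representations of $G\times H$ are exactly the tensor products $\rho\otimes\sigma$, where $\rho$ and $\sigma$ are the pullbacks of irreducible representations of $G$ and $H$ along the two projections.

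First I would record that if $\rho,\sigma$ have characters $\chi_\rho,\chi_\sigma$, then $\rho\otimes\sigma$ (acting on $V_\rho\otimes V_\sigma$) has character $(g,h)\mapsto \chi_\rho(g)\,\chi_\sigma(h)$. Then, for irreducibles $\rho,\rho'$ of $G$ and $\sigma,\sigma'$ of $H$, the Hermitian inner product of class functions on $G\times H$ factors:
\[
\langle \chi_{\rho\otimes\sigma},\,\chi_{\rho'\otimes\sigma'}\rangle_{G\times H}
= \Big(\tfrac{1}{|G|}\sum_{g}\chi_\rho(g)\overline{\chi_{\rho'}(g)}\Big)\Big(\tfrac{1}{|H|}\sum_{h}\chi_\sigma(h)\overline{\chi_{\sigma'}(h)}\Big)
= \langle\chi_\rho,\chi_{\rho'}\rangle_G\,\langle\chi_\sigma,\chi_{\sigma'}\rangle_H .
\]
By orthonormality of irreducible characters over $\C$ (Maschke plus Schur), the right-hand side is $1$ when $(\rho,\sigma)\cong(\rho',\sigma')$ and $0$ otherwise. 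Hence each $\rho\otimes\sigma$ is irreducible, and distinct pairs give pairwise non-isomorphic irreducibles of $G\times H$.

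It remains to check that these exhaust the irreducibles of $G\times H$, which I would do by a dimension count: summing over all pairs of irreducibles of $G$ and of $H$,
\[
\sum_{\rho,\sigma} (\dim(\rho\otimes\sigma))^2 = \Big(\sum_\rho (\dim\rho)^2\Big)\Big(\sum_\sigma (\dim\sigma)^2\Big) = |G|\cdot|H| = |G\times H|,
\]
and this already equals the sum of squares of dimensions over a complete set of irreducibles of $G\times H$; since the $\rho\otimes\sigma$ are pairwise non-isomorphic irreducibles contributing the full total, there can be no others. (Alternatively one can note that the conjugacy classes of $G\times H$ are precisely the products of a conjugacy class of $G$ with one of $H$, so the number of irreducibles of $G\times H$ equals the number of pairs $(\rho,\sigma)$.)

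There is no real obstacle here: the result is entirely classical. The only points needing care are the bookkeeping in the induction (that iterated pullbacks are again pullbacks along the correct projection) and invoking the completeness of character theory correctly; of these, the exhaustion step above is the least self-contained, since it rests on the full orthogonality/completeness theory over $\C$ rather than on an explicit construction, but it is standard.
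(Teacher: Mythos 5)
Your proof is correct and is exactly the character-theoretic argument the paper gestures at: the paper does not prove the proposition itself but cites \cite[Theorem 19.18]{james-liebeck}, remarking only that it ``may be shown using characters.'' Your induction to the two-factor case, the orthonormality computation showing each $\rho\otimes\sigma$ is irreducible with distinct pairs non-isomorphic, and the exhaustion by the sum-of-squares (or conjugacy-class) count are the standard details behind that citation.
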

\begin{proof} This is a standard fact from representation theory and may be shown using characters: see, for example, \cite[Theorem 19.18]{james-liebeck}.\end{proof}

\begin{lemma}\label{aut-lem}
Let $G$ be a group. Then $\log |\Aut(G)| \ll \log^2 |G|$.
\end{lemma}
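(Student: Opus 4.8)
The statement to prove is Lemma~\ref{aut-lem}: for any finite group $G$, $\log|\Aut(G)| \ll \log^2|G|$.

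\medskip

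The plan is to exploit the fact that a group is determined by the images of a generating set, together with the fact that a finite group always has a small generating set. First I would observe that any finite group $G$ can be generated by at most $\log_2|G|$ elements: build a generating sequence $g_1, g_2, \dots$ greedily, where each $g_{k+1}$ is chosen outside the subgroup $\langle g_1,\dots,g_k\rangle$ generated so far; each time we adjoin a new generator the order of the subgroup generated at least doubles (by Lagrange, since the new subgroup properly contains the old one, so its order is a multiple of the old order that is strictly larger, hence at least twice as large). Therefore the process terminates after at most $r \leq \log_2|G|$ steps, yielding a generating set $S = \{g_1,\dots,g_r\}$.

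\medskip

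Next, any automorphism $\phi \in \Aut(G)$ is completely determined by the tuple $(\phi(g_1),\dots,\phi(g_r))$, since $S$ generates $G$. Each $\phi(g_i)$ is an element of $G$, so the number of such tuples is at most $|G|^r$. Hence
\[ |\Aut(G)| \leq |G|^r \leq |G|^{\log_2|G|}, \]
and taking logarithms gives $\log|\Aut(G)| \leq (\log_2|G|)(\log|G|) = \frac{(\log|G|)^2}{\log 2} \ll \log^2|G|$, as required.

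\medskip

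There is essentially no serious obstacle here; the only point requiring a little care is the greedy generation argument, namely the claim that $|\langle g_1,\dots,g_{k+1}\rangle| \geq 2|\langle g_1,\dots,g_k\rangle|$ whenever $g_{k+1}\notin\langle g_1,\dots,g_k\rangle$, which follows because the smaller subgroup has index at least $2$ in the larger one by Lagrange's theorem. (One should also handle the trivial group separately, where $r=0$ and the bound is vacuous.) This crude bound is more than sufficient for the applications in the paper — in Case 2 and Case 4 of Proposition~\ref{prop68} it is invoked only to absorb $|\Out'(\Gamma)|$ into an exponential in a power of $D(\Gamma)$, with enormous room to spare.
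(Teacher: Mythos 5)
Your proof is correct and is essentially the same as the paper's: both build a generating set greedily so that each new generator at least doubles the subgroup generated (hence at most $\log_2|G|$ generators), and then bound $|\Aut(G)|$ by the number of possible image tuples $|G|^{\log_2|G|}$. No substantive difference.
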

\begin{proof}
Greedily pick a generating set $\{x_1,\dots, x_k\}$ for $G$ by taking $x_{j+1}$ to be any element not in $\langle x_1,\dots, x_j\rangle$, if there is one. Since $\langle x_1,\dots, x_j\rangle$ is a proper subgroup of $\langle x_1,\dots, x_{j+1}\rangle$, we have $|\langle x_1,\dots, x_j\rangle| \geq 2^j$, and therefore $k \leq \log_2 |G|$. But any automorphism $\phi \in \Aut(G)$ is completely determined by the $\phi(x_i)$.
\end{proof}

\begin{lemma}\label{unitary-equivalence}
Let $G$ be a finite group. Suppose that $G$ has unitary representations $\rho_i : G \rightarrow \U(V_i)$, $i = 1, 2$ and that these are equivalent in the usual sense of representation theory: thus there exists a linear isomorphism $\pi : V_1 \rightarrow V_2$  which is $G$-intertwining in the sense that $\pi \circ \rho_1(g) = \rho_2(g) \circ \pi$ for all $g \in G$. Then $\pi$ is a scalar multiple of a unitary map, and in particular the two representations are unitarily equivalent.
\end{lemma}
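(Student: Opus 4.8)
The plan is to reduce the statement to the complex (algebraically closed) form of Schur's lemma by exhibiting the positive operator $\pi^*\pi$ on $V_1$ as a $G$-equivariant endomorphism, hence a positive scalar. As the remark preceding the lemma in the main text signals, this is the irreducible case, and irreducibility is genuinely needed here: for $\rho_1 = \rho_2$ a direct sum of two trivial representations, $\pi$ can be an arbitrary invertible matrix. So I would state and prove it under the assumption that $\rho_1$ (equivalently $\rho_2$) is irreducible.

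First I would take Hermitian adjoints in the intertwining identity $\pi \circ \rho_1(g) = \rho_2(g) \circ \pi$. Since the $\rho_i$ are unitary we have $\rho_i(g)^* = \rho_i(g)^{-1} = \rho_i(g^{-1})$, and the adjoint identity becomes $\rho_1(g^{-1}) \circ \pi^* = \pi^* \circ \rho_2(g^{-1})$ for all $g$; as $g$ ranges over $G$ this says precisely that $\pi^* : V_2 \to V_1$ is also $G$-intertwining. Composing the two intertwining maps, the operator $T := \pi^*\pi : V_1 \to V_1$ satisfies $T \circ \rho_1(g) = \pi^* \circ \rho_2(g) \circ \pi = \rho_1(g) \circ T$ for every $g$, so $T$ commutes with $\rho_1(G)$.

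Now, because $\rho_1$ is irreducible and the ground field is $\C$, Schur's lemma forces $T = \lambda \, \id_{V_1}$ for some $\lambda \in \C$. Evaluating $\langle T v, v \rangle = \lambda \Vert v \Vert^2$ at any nonzero $v$ gives $\lambda = \Vert \pi v \Vert^2 / \Vert v \Vert^2$, which is real and strictly positive (using that $\pi$ is injective). Put $U := \lambda^{-1/2} \pi$. Then $U^* U = \lambda^{-1} \pi^* \pi = \id_{V_1}$, so $U$ is unitary, and $\pi = \lambda^{1/2} U$ displays $\pi$ as a scalar multiple of a unitary map. Finally $U$, being a nonzero scalar multiple of $\pi$, is itself $G$-intertwining, so $U \rho_1(g) U^{-1} = \rho_2(g)$ for all $g$ and the representations are unitarily equivalent. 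The only steps requiring any care are the bookkeeping with adjoints of unitary operators and the appeal to the complex form of Schur's lemma; the latter is exactly what yields a genuine scalar (rather than merely unitary-equivalence-up-to-a-twist) and is the reason irreducibility cannot be dropped — so that invocation, rather than any computation, is the conceptual crux.
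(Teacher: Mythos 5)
Your proof is essentially the same as the paper's: the operator you call $T = \pi^*\pi$ is exactly the map $\psi$ the paper defines via $\langle \psi x, y\rangle_{V_1} = \langle \pi x, \pi y\rangle_{V_2}$, and both arguments reduce to applying Schur's lemma to it. You are, if anything, a bit more careful — you make the (implicit) irreducibility hypothesis explicit and check positivity of the scalar before taking its square root, whereas the paper leaves both points tacit.
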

\begin{proof}
Define a linear automorphism $\pi : V_1 \rightarrow V_1$ via $\langle \psi x, y \rangle_{V_1} = \langle \pi x, \pi y\rangle_{V_2}$ for all $x,y \in V_1$. If $g \in G$ then we compute that \begin{align*} \langle \rho_1(g)\psi x,& \rho_1(g) y \rangle_{V_1} = \langle \psi x, y \rangle_{V_1} = \langle \pi x, \pi y \rangle_{V_2} \\ & = \langle \rho_2(g) \pi x, \rho_2(g) \pi y\rangle_{V_2} =\langle \pi \rho_1(g) x,\pi \rho_1(g) y\rangle_{V_2} \\ & =  \langle \psi \rho_1(g) x,\rho_1(g) y \rangle_{V_1}.\end{align*} From this it follows that $\rho_1(g) \circ \psi = \psi \circ \rho_1(g)$, or in other words that $\psi$ is $G$-intertwining. By Schur's lemma, $\psi$ is a scalar multiple of the identity, and hence $\pi$ is a scalar multiple of a unitary map.
\end{proof}

\emph{Remark.} The same is true without the assumption of irreducibility; see \cite{mathoverflow-unitary} for discussion.

\begin{proposition}\label{extra-special-prop}
Let $E$ be an extra-special $p$-group. We have the following statements.
\begin{enumerate}
\item $E$ has size $p^{1 + 2m}$ for some integer $m \geq 1$;
\item $E$ is generated by $2m$ elements;
\item the exponent of $E$ divides $p^2$; 
\item the dimension $D(E)$ of the smallest faithful representation of $E$ is at least $p^m$.
\end{enumerate}
\end{proposition}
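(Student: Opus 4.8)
The plan is to derive everything from two structural facts and, for part (4), from Clifford theory. First I would record that $[E,E] = Z(E)$ and $\Phi(E) = Z(E)$: since $E/Z(E)$ is abelian, $[E,E] \leq Z(E)$, while $m>0$ forces $E$ nonabelian so $[E,E]\neq 1$, and as $|Z(E)| = p$ is prime this gives $[E,E] = Z(E)$; and since $(xZ(E))^p = Z(E)$ in the elementary abelian quotient, $x^p \in Z(E)$ for all $x \in E$, so $E^p \leq Z(E)$ and $\Phi(E) = [E,E]E^p = Z(E)$. Part (3) is then immediate: $x^{p^2} = (x^p)^p = 1$ because $x^p \in Z(E)$ and $Z(E)$ has exponent $p$. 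Part (2) I would get from the Burnside basis theorem — the minimal number of generators of the $p$-group $E$ equals $\dim_{\F_p}(E/\Phi(E)) = \dim_{\F_p}(E/Z(E))$, which is $2m$ once (1) is known — or directly: lifting an $\F_p$-basis of $E/Z(E)$ to elements of $E$, the subgroup they generate surjects onto $E/Z(E)$ and so, having index $1$ or $p$, must be all of $E$, since an index-$p$ subgroup would contain $[E,E] = Z(E)$ and hence map onto $E/Z(E)$, a contradiction.

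For part (1) I would use the pairing $\beta : E/Z(E) \times E/Z(E) \to Z(E) \cong \F_p$, $\beta(xZ(E), yZ(E)) = [x,y]$, induced by the commutator: it is well defined and bi-additive because $E$ has nilpotency class $2$ (so $[E,E]$ is central), alternating because $[x,x] = 1$, and nondegenerate because its radical is $Z(E)/Z(E) = 0$. A nondegenerate alternating bilinear form on a finite-dimensional $\F_p$-space has even-dimensional underlying space, so $\dim_{\F_p}(E/Z(E)) = 2m$ and $|E| = p^{1+2m}$.

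The substantive point is part (4), which I would prove by the usual Stone--von Neumann argument phrased via Clifford theory. Pick a maximal abelian subgroup $A \leq E$. Then $Z(E) \leq A$ (else $A\,Z(E)$ would be strictly larger abelian), so $A/Z(E)$ is totally $\beta$-isotropic, and maximality of $A$ makes it a maximal isotropic subspace of the nondegenerate symplectic space $E/Z(E)$ of dimension $2m$, hence of dimension exactly $m$; thus $|A| = p^{1+m}$, $[E:A] = p^m$, and, since $[E,E] = Z(E) \leq A$, also $A \lhd E$. Now let $\rho$ be a faithful irreducible representation of $E$. By Schur's lemma $\rho$ acts on the central subgroup $Z(E)$ through a character $\chi$, and faithfulness forces $\chi$ to be injective, hence nontrivial on $Z(E) \cong \Z/p\Z$. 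Let $\lambda$ be a linear character of $A$ occurring in $\rho|_A$; then $\lambda|_{Z(E)} = \chi$. For $g \in E$, $a \in A$ one has $gag^{-1} = a\,[g,a]$ with $[g,a] \in [E,E] = Z(E)$, so $\lambda^g(a) = \lambda(a)\,\chi([g,a])$, and as $\chi$ is faithful this equals $\lambda(a)$ for all $a$ exactly when $g \in C_E(A) = A$ (the last equality because $A$ is maximal abelian). Hence the inertia group of $\lambda$ is $A$, so $\mathrm{Ind}_A^E \lambda$ is irreducible of dimension $[E:A] = p^m$; since $\langle \rho, \mathrm{Ind}_A^E \lambda \rangle = \langle \rho|_A, \lambda \rangle \geq 1$ by Frobenius reciprocity, $\rho \cong \mathrm{Ind}_A^E \lambda$, so $\dim \rho = p^m$ and in particular $D(E) \geq p^m$.

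I expect the only genuinely delicate bookkeeping to be in part (4): confirming that a maximal totally isotropic subspace of $E/Z(E)$ has dimension exactly $m$ (any isotropic subspace of dimension $<m$ enlarges inside its perpendicular, and none of dimension $>m$ is isotropic), that $A$ is normal, and that the inertia-group computation legitimately invokes Mackey's irreducibility criterion. The remaining parts are entirely routine once the two structural facts recorded at the outset are in hand.
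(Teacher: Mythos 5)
Your proof is correct and complete; the paper itself gives no argument for this proposition, simply citing Gorenstein \cite[\S 5.5]{gorenstein}, so you have supplied the standard textbook proof that reference contains. Parts (1)--(3) are established exactly as one should: $[E,E]=Z(E)=\Phi(E)$ from $|Z(E)|=p$ and nonabelianness, exponent $p^2$ from $E^p\le Z(E)$, the generator count from the Burnside basis theorem, and evenness of $\dim_{\F_p}E/Z(E)$ from nondegeneracy of the alternating commutator form $\beta$ (you correctly use the alternating condition $[x,x]=1$ rather than mere skew-symmetry, which matters at $p=2$). Part (4) is the Stone--von Neumann/Clifford-theoretic argument: a maximal abelian $A$ contains $Z(E)$, corresponds to a Lagrangian of $(E/Z(E),\beta)$ so $[E:A]=p^m$, is normal since it contains $[E,E]$, and for a faithful irreducible $\rho$ the central character is injective, the inertia group of any constituent $\lambda$ of $\rho|_A$ is $C_E(A)=A$, whence $\rho\cong\mathrm{Ind}_A^E\lambda$ has dimension exactly $p^m$. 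One small point worth flagging: the prose of the statement says ``smallest faithful representation,'' but the paper's standing definition of $D(\Gamma)$ is the smallest faithful \emph{irreducible} dimension, which is precisely what your Clifford-theory argument bounds; if one wanted the statement for arbitrary faithful representations, one extra sentence is needed (every linear character of $E$ kills $Z(E)$, so a faithful representation must contain one of the dimension-$p^m$ irreducibles), but that is not required for the paper's usage.
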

\begin{proof} See \cite[\S 5.5]{gorenstein}.\end{proof}

\begin{proposition}\label{a5}
There is a constant $c > 0$ such that, if $Q$ is a quasisimple group which is neither abelian nor alternating, we have $D(Q) \geq e^{C \sqrt{\log |Q|}}$.
\end{proposition}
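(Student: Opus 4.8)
The plan is to invoke the Classification of Finite Simple Groups in the form that gives, for every nonabelian finite simple group $S$, a lower bound on the minimal degree $m(S)$ of a nontrivial projective representation (equivalently the minimal degree of a nontrivial irreducible representation of any quasisimple cover of $S$). Writing $Q$ for our quasisimple group, $S = Q/Z(Q)$ is simple, and since $Q$ is a central extension of $S$, any faithful irreducible representation of $Q$ descends to a projective representation of $S$ of the same degree; thus $D(Q) \geq m(S)$. It therefore suffices to prove $m(S) \geq e^{c\sqrt{\log|S|}}$ for all nonabelian simple $S$ that are neither alternating (and not cyclic, but simple nonabelian excludes that), together with the observation that $|Q| \leq |S| \cdot |Z(Q)| \leq |S| \cdot |H_2(S,\C^\times)|$ and the Schur multiplier is small enough (polynomial in $|S|$, in fact much smaller) that $\sqrt{\log|Q|} \leq \sqrt{\log|S|} + O(1)$, so a bound in terms of $|S|$ transfers to one in terms of $|Q|$ after shrinking $c$.

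The main work is then a case check over the CFSG list for $S \notin \{A_n\}$. First, there are only finitely many sporadic groups, so they contribute only a change in the constant. Second, for the groups of Lie type $S = S(q)$ of (untwisted or twisted) rank $r$ over $\F_q$, one has on one hand $\log|S| \ll r^2 \log q$ (the order is $q^{O(r^2)}$ up to bounded factors), and on the other hand the classical lower bounds of Landazuri–Seitz–Zalesskii (and subsequent refinements) give $m(S) \geq q^{c_0 r}$ for an absolute $c_0 > 0$ — indeed for most families $m(S)$ is comparable to $q^{r}$ or larger, with the smallest cases like $\mathrm{PSL}_2(q)$ still giving $m(S) \gg q$. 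Combining, $\log m(S) \gg r\log q \gg \sqrt{(r^2\log q)\cdot \log q} \geq \sqrt{\log|S|}\cdot\sqrt{\log q} \gg \sqrt{\log|S|}$ once $q \geq 2$; one checks the finitely many small-$q$, small-$r$ exceptions by hand (the Landazuri–Seitz–Zalesskii tables list these explicitly). This yields the claimed inequality with an absolute constant.

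The step I expect to be the genuine obstacle — or at least the one requiring care rather than citation — is making the rank-versus-order bookkeeping uniform across all the twisted families and the handful of exceptional groups, since the order formulas and the Landazuri–Seitz–Zalesskii bounds are stated case by case and the "generic" inequality $\log m(S) \gg \sqrt{\log|S|}$ degrades precisely in the low-rank regime (e.g. $\mathrm{PSL}_2$, $\mathrm{PSL}_3$, the Suzuki and Ree families), where $\log|S| \asymp \log q$ and one needs $m(S)$ to grow like a fixed power of $q$; this is true in every such family but must be verified from the tables rather than from a general rank heuristic. Everything else — the descent from $Q$ to $S$, absorbing the Schur multiplier, and disposing of the sporadics — is routine.
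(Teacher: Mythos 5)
Your proposal is correct and follows essentially the same route as the paper: reduce to the simple quotient $S = Q/Z(Q)$, absorb the centre via the CFSG-dependent bound on Schur multipliers, and invoke the Landazuri--Seitz lower bounds on projective degrees together with the CFSG list (alternating, Lie type, sporadic). The paper is more terse, citing the Landazuri--Seitz table directly rather than doing the rank-versus-order bookkeeping in the body, but the substance and the dependencies are identical.
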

\begin{proof}
It follows from \cite[Chapter 33]{aschbacher} that if $Q$ is quasisimple with corresponding simple group $\tilde Q = Q/Z(Q)$ then $|Z(Q)|$ is bounded in size by the cardinality of the \emph{Schur multiplier} $|H^2(\tilde Q, \C^{\times})|$. The Schur multipliers of all finite simple groups are known (of course, this requires CFSG) and in fact they are universally bounded in size except for the groups $\mbox{PSL}_n(\F_q)$ and $\mbox{PSU}_n(\F_q)$, in which case they have size $\leq \max(n+1, q+1)$.  Thus in all cases $|Z(Q)| \ll \sqrt{\log |Q|}$, and so
\begin{equation}\label{qqtile}
|\tilde Q| \gg |Q|(\log |Q|)^{-1/2}.
\end{equation}
We note that vastly inferior bounds would suffice for our purposes, but what is available without the CFSG is ridiculously weak: see \cite{tao-mathoverflow} from some discussion of this point.

Now we refer to the table \cite[p 419]{landazuri-seitz}. This table gives lower bounds for the degrees of projective representations of Chevalley groups. For our purposes, it is important to know that (i) ``projective representations'' of $G$ are in 1-1 correspondence with representations of central extensions of $G$; that (ii) ``Chevalley groups'' means 
``groups of Lie type'' rather than the more restricted notion of Chevalley group one sometimes sees, and (iii) the CFSG states (in its ``rough'' form) that all sufficiently large nonabelian finite simple groups are either alternating groups, or groups of Lie type. 
An inspection of the table reveals that $D(Q) \geq e^{c (\log |\tilde Q|)^{1/2}}$ when $Q$ is quasisimple and not an alternating group. 
The statement of the proposition follows immediately from this and \eqref{qqtile}.  
 \end{proof}

 \begin{proposition}\label{alt-prop}
 Let $n \geq 15$. Then all of the following statements hold.
 \begin{enumerate}
 \item $Z(A_n)$ is trivial.
 \item The unique irreducible representation $\rho$ of $A_n$ with $1 < \dim \rho < \frac{1}{2}n(n-3)$ is the permutation representation on $\{ z \in \C^n : z_1 +\dots + z_n = 0\}$, which has dimension $n - 1$;
 \item $\Aut(A_n) \cong S_n$, and hence $|\Out(A_n)| = 2$;
 \item Apart from $A_n$ itself, there is a unique quasisimple group $Q = \hat{A}_n$, the ``double cover'' of $A_n$, such that $Q/Z(Q) \cong A_n$;
 \item $D(\hat{A}_n) = 2^{\lfloor (n - 2)/2\rfloor}$.
 \end{enumerate}
 Moreover $|\Out(A_n)| \leq 4$ for all $n \geq 4$.
 \end{proposition}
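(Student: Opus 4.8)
The plan is to obtain each item as an instance of a classical fact about the alternating groups, citing the literature where the underlying computation is genuinely large (CFSG-adjacent), and to spell out in detail only the two places where a little glue is needed: the descent from $S_n$ to $A_n$ in part (2), and the passage from the Schur cover to arbitrary quasisimple quotients in part (4).

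\emph{Parts (1) and (3), and the moreover clause.} For $n \geq 5$ the group $A_n$ is simple and nonabelian, so its centre --- being a normal subgroup --- is trivial; this gives (1) and also $\Inn(A_n) = A_n/Z(A_n) \cong A_n$. For (3) I would invoke the classical determination of $\Aut(A_n)$ (see, e.g., Dixon--Mortimer, \emph{Permutation Groups}): one has $\Aut(A_n) \cong S_n$ for every $n \geq 4$ with $n \neq 6$, hence in particular for $n \geq 15$; combined with the previous sentence this gives $|\Out(A_n)| = [S_n : A_n] = 2$. For the final clause, the same classical statement gives $|\Out(A_n)| = 2$ for all $n \geq 4$ with $n \neq 6$, while $|\Out(A_6)| = 4$; so $|\Out(A_n)| \leq 4$ for every $n \geq 4$.

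\emph{Part (2).} Here I would reduce to the known list of small-degree irreducible characters of $S_n$ and then descend by Clifford theory for the index-two subgroup $A_n \leq S_n$. Rasala's determination of the minimal character degrees of $S_n$ shows that, for $n$ large, the irreducible characters of $S_n$ of degree strictly below $\frac{1}{2}n(n-3)$ are exactly the four of degree at most $n-1$: the trivial and sign characters and the two twists $V$, $V \otimes \operatorname{sgn}$ of the standard $(n-1)$-dimensional representation; the next degree up is $\frac{1}{2}n(n-3)$ itself, attained by the partitions $(n-2,2)$ and $(2,2,1^{n-4})$. Now the trivial and sign characters of $S_n$ both restrict to the trivial character of $A_n$, while $V$ and $V \otimes \operatorname{sgn}$ correspond to distinct non-self-conjugate partitions, hence each restricts irreducibly to one and the same $(n-1)$-dimensional irreducible character of $A_n$, namely the permutation character on $\mathbf{1}^{\perp} \subset \C^n$. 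An arbitrary irreducible character of $A_n$ is either the irreducible restriction of a non-self-conjugate $S_n$-character, or a constituent $\chi^{\pm}$ of the restriction of a self-conjugate one $\chi$, of degree $\chi(1)/2$. So it remains to check that no self-conjugate partition of $n$ has degree less than $n(n-3)$ when $n \geq 15$: this is a routine hook-length estimate --- the self-conjugate partitions are hooks and near-hooks of essentially central-binomial degree, together with staircase-type shapes of super-polynomial degree --- and one could alternatively cite a source tabulating the low-dimensional irreducibles of $A_n$ directly. This step, bounding the constituents coming from self-conjugate characters, is exactly where the hypothesis $n \geq 15$ is consumed, and it is the one requiring the most care.

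\emph{Parts (4) and (5).} A quasisimple group $Q$ with $Q/Z(Q) \cong A_n$ is a perfect central extension of the perfect group $A_n$, hence a quotient of the universal central extension (Schur cover) $\hat{A}_n$ of $A_n$ by a subgroup of $Z(\hat{A}_n) = H_2(A_n,\C^{\times})$. For $n \geq 8$ the Schur multiplier of $A_n$ is $\Z/2\Z$ (a standard fact; see the ATLAS or Karpilovsky's book), so $Z(\hat{A}_n)$ has order $2$ and its only subgroups are $\{1\}$ and itself, yielding respectively $\hat{A}_n$ and $A_n$ as the only such $Q$; this is (4). For (5), note that $Z(\hat{A}_n)$ is the unique nontrivial proper normal subgroup of $\hat{A}_n$ (if $N \lhd \hat{A}_n$ then $NZ/Z \lhd A_n$ is trivial or all of $A_n$, and in the latter case perfectness of $\hat{A}_n$ forces $N = \hat{A}_n$), so a nontrivial irreducible representation of $\hat{A}_n$ that is not faithful factors through $A_n$. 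Thus the faithful irreducibles of $\hat{A}_n$ are precisely the genuine spin (projective) representations of $A_n$; by Schur's 1911 classification of these, the smallest --- the basic spin representation --- has dimension $2^{\lfloor (n-2)/2 \rfloor}$, so $D(\hat{A}_n) = 2^{\lfloor (n-2)/2 \rfloor}$. (Reference: Schur; or Hoffman--Humphreys, \emph{Projective Representations of the Symmetric Groups}.)
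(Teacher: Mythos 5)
Your proposal is correct and follows essentially the same route as the paper, whose proof of this proposition is simply a list of citations to the same classical facts: simplicity and $\Aut(A_n)\cong S_n$ for (1), (3) and the final clause, Rasala's minimal-degree theorem (the paper also cites Tong-Viet) for (2), the order-2 Schur multiplier of $A_n$ for $n\geq 8$ for (4), and Schur's basic spin representation (via Kleshchev--Tiep) for (5). The only place you add substance beyond the paper is the Clifford-theoretic descent from $S_n$ to $A_n$ in (2), including the check that self-conjugate partitions have degree at least $n(n-3)$; that check is correct (for $n\geq 7$ a self-conjugate partition has $\lambda_1=\lambda_1'<n-2$, so Rasala's lists already exclude it), so there is no gap.
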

 \begin{proof}
 (1) and (3) are very well-known. Point (2) is essentially in Rasala \cite{rasala} (but see also \cite[Corollary 5]{tong-viet}).
 For (4), it is well-known that when $n \geq 8$, the Schur multiplier $H_2(A_n, \C^{\times})$ is cyclic of order $2$, and that the nontrivial element corresponds to the (Schur) double cover $\hat{A}_n$. Finally, (5) goes back to Schur, at least for large $n$, but see \cite[Main Theorem]{kt}. 
 \end{proof}
 
\begin{lemma}\label{p-core-commute}
Let $G$ be a finite group. Then any two $p$-cores $O_{\pi}(G)$, $O_{\pi'}(G)$, $\pi,\pi'$ distinct primes, commute. 
\end{lemma}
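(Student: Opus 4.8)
The plan is to deduce this from the elementary fact that two normal subgroups of a group which meet trivially must commute elementwise. First I would record that $O_{\pi}(G)$ and $O_{\pi'}(G)$ are both normal in $G$ --- this is immediate from their defining property as, respectively, the largest normal $\pi$-subgroup and the largest normal $\pi'$-subgroup of $G$. Next, since $\pi$ and $\pi'$ are distinct primes, the intersection $N := O_{\pi}(G) \cap O_{\pi'}(G)$ is a subgroup of $G$ whose order simultaneously divides a power of $\pi$ and a power of $\pi'$; by Lagrange's theorem this forces $|N| = 1$.

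The remaining step is the standard commutator manipulation. Fix $x \in O_{\pi}(G)$ and $y \in O_{\pi'}(G)$ and consider the commutator $[x,y] = x^{-1} y^{-1} x y$. Regrouping it as $x^{-1} \cdot (y^{-1} x y)$ and using the normality of $O_{\pi}(G)$ shows that $[x,y] \in O_{\pi}(G)$; regrouping it instead as $(x^{-1} y^{-1} x) \cdot y$ and using the normality of $O_{\pi'}(G)$ shows that $[x,y] \in O_{\pi'}(G)$. Hence $[x,y] \in N = \{1\}$, so $xy = yx$. Since $x$ and $y$ were arbitrary, $O_{\pi}(G)$ and $O_{\pi'}(G)$ commute, as claimed.

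There is no genuine obstacle here: this is a textbook fact, and one could equally well simply cite a reference such as Aschbacher \cite{aschbacher} or Gorenstein \cite{gorenstein} in place of the argument above. The only point demanding a small amount of care is to invoke the normality of the \emph{correct} one of the two subgroups in each of the two regroupings of the commutator, so that $[x,y]$ is seen to lie in both $O_{\pi}(G)$ and $O_{\pi'}(G)$ and hence in their (trivial) intersection.
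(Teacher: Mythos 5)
Your argument is correct, and it is a genuinely different route from the one the paper takes. The paper proves this via the theory of finite nilpotent groups: it cites Hall to observe that $O_{\pi}(G)$ and $O_{\pi'}(G)$ are nilpotent normal subgroups of $G$, that a product of nilpotent normal subgroups is again nilpotent (Fitting's theorem), and that a finite nilpotent group is the direct product of its Sylow subgroups; this last fact then forces $O_{\pi}(G)O_{\pi'}(G)$ to be the direct product $O_{\pi}(G) \times O_{\pi'}(G)$, so the two factors commute. Your proof instead goes through the elementary general fact that two normal subgroups $M, N \lhd G$ with $M \cap N = \{1\}$ commute elementwise, via the commutator trick $[x,y] \in M \cap N$, together with the Lagrange-theoretic observation that a $\pi$-group and a $\pi'$-group meet trivially for distinct primes. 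Your route is shorter and requires only first-principles group theory, whereas the paper's route fits the surrounding context (the whole section is about Fitting and generalised Fitting subgroups, so invoking nilpotency is thematically natural and the cited theorems are already in scope). Both are standard; neither has a gap.
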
 
\begin{proof}
This follows easily from the basic theory of finite nilpotent groups, for which \cite{hall} is a good reference. First it follows from the ``if'' direction of \cite[Theorem 10.3.4]{hall} that both $O_{\pi}(G)$ and $O_{\pi'}(G)$ are nilpotent normal subgroups of $G$. By \cite[Theorem 10.3.2]{hall}, the product $H := O_{\pi}(G) O_{\pi'}(G)$ is a nilpotent subgroup of $G$. Now it follows from the ``only if'' direction of \cite[Theorem 10.3.4]{hall} that $H$ is in fact a direct product of $O_{\pi}(G)$ and $O_{\pi'}(G)$.  
\end{proof}

\end{document}